\newcommandx{\fab}[2][1=]{\todo[inline, author={Fabien}, linecolor=blue,backgroundcolor=blue!25,bordercolor=blue,#1]{#2}}
\newcommandx{\Max}[2][1=]{\todo[inline, author={Maxime}, linecolor=green,backgroundcolor=green!25,bordercolor=green,#1]{#2}}
\newtheorem{theoreme}{Theorem}[section]
\newtheorem{prop}{Proposition}[section]
\newtheorem{lem}{Lemma}[section]
\newtheorem{cor}{Corollary}[section]
\theoremstyle{remark}
\newtheorem{rem}{\bf Remark}[section]
\renewcommand{\l}{\left}
\renewcommand{\r}{\right}
\newcommand{\un}{\underline}
\newcommand{\1}{c_{1}} 
\newcommand{\2}{c_2} 
\newcommand{\3}{c_3} 
\newcommand{\4}{c_4} 
\newcommand{\X}{X}
\newcommand{\Xge}{\bar{X}}
\newcommand{\B}{B}
\newcommand{\pas}{\gamma}
\newcommand{\laminf}{\alpuc}
\newcommand{\E}{\mathbb{E}}
\newcommand{\PE}{\mathbb{P}}
\newcommand{\ind}{r}
\newcommand{\lev}{R}
\newcommand{\ER}{\mathbb{R}}
\newcommand{\ES}{\mathbb{E}}
\newcommand{\tcr}{\textcolor{red}}
\newcommand{\HUN}{\mathbf{(H_{1})}}
\newcommand{\HDEUX}{\mathbf{(H_{2})}}
\newcommand{\bea}{\mathfrak{a}}
\newcommand{\HTROIS}{\mathbf{(H_{3})}}
\newcommand{\HQU}{\mathbf{(H_4)}}
\newcommand{\Cs}{\mathbf{(C_{\alpha})}}
\newcommand{\cfrak}{\mathfrak{c}}
\newcommand{\blip}{\bun}
\newcommand{\alpuc}{\alpha}
\newcommand{\bun}{L}
\newcommand{\cuniv}{\mathfrak{c}_{\mathfrak{u}}}
\newcommand{\tauun}{\tau_1}
\newcommand{\taudeux}{\tau_2}
\newcommand{\cbea}{\mathfrak{c}_{\bea}}
\newcommand{\dbea}{\mathfrak{d}_{\bea}}
\newcommand{\Tfrak}{\mathfrak{t}}
\author{Maxime Eg\'ea~\thanks{Universit\'e d'Angers, CNRS, LAREMA, SFR Mathstic, F-49000 Angers, France. E-mail: \texttt{maxime.egea@univ-angers.fr}}  $\;$and  Fabien Panloup~\thanks{Universit\'e d'Angers, CNRS, LAREMA, SFR Mathstic, F-49000 Angers, France. E-mail: \texttt{fabien.panloup@univ-angers.fr}}
}
\begin{document}

\title{Multilevel-Langevin pathwise average for Gibbs approximation}

\maketitle
\begin{abstract}
We propose and study a new multilevel method for the numerical approximation of a Gibbs distribution $\pi$ on $\mathbb{R}^d$, based on  (overdamped) Langevin diffusions.  This method inspired by   \cite{mainPPlangevin} and \cite{giles_szpruch_invariant} relies on a multilevel occupation measure, $i.e.$ on an appropriate combination  of $\lev$ occupation measures of (constant-step) Euler schemes with respective steps $\pas_\ind=\pas_0 2^{-\ind}$, $\ind=0,\ldots,\lev$. We first state a  quantitative result under general assumptions which guarantees an \textit{$\varepsilon$-approximation} (in a $L^2$-sense) with a cost  of the order $\varepsilon^{-2}$ or $\varepsilon^{-2}|\log \varepsilon|^3$ under less contractive assumptions. 

We then apply it to overdamped Langevin diffusions with strongly convex potential $U:\ER^d\rightarrow\ER$ and obtain an  \textit{$\varepsilon$-complexity} of the order ${\cal O}(d\varepsilon^{-2}\log^3(d\varepsilon^{-2}))$ or ${\cal O}(d\varepsilon^{-2})$ under additional assumptions on $U$.  More precisely, up to universal constants, an appropriate choice of the parameters  leads to a cost controlled by ${(\bar{\lambda}_U\vee 1)^2}{\underline{\lambda}_U^{-3}} d\varepsilon^{-2}$ (where $\bar{\lambda}_U$ and $\underline{\lambda}_U$ respectively denote the supremum and the infimum of the largest and lowest eigenvalue of $D^2U$).\smallskip

{We finally complete these theoretical results with some numerical illustrations including comparisons to other algorithms in Bayesian learning and opening to non strongly convex setting.}
\end{abstract}
{{\footnotesize \textit{Mathematics Subject Classification:} Primary 65C05-37M25 Secondary 65C40-93E35.}

\noindent {\footnotesize \textit{Keywords:} Multilevel Monte-Carlo; ergodic diffusion; Langevin algorithm.}}

\section{Introduction}

Let  $ \l(B_t \r)_{t \ge 0} $ denote a $d$-dimensional standard Brownian motion. Let $(X_t)_{t\ge0}$ denote the solution of  the  stochastic differential equation (\textbf{SDE}) 
\begin{equation}\label{eq:SDE}
\mathrm{d} X_t = b(X_t) \mathrm{d}t + \sigma(X_t) \mathrm{d}B_t ,
\end{equation}
{where} $b$ : $\mathbb{R}^d \rightarrow \mathbb{R}^d$ and $\sigma:\ER^d\rightarrow \mathbb{M}_{d,d}$ (space of $d$-squared matrices) are Lipschitz continuous function.  Under these assumptions, strong existence and uniqueness classically hold and $\l( X_t \r)_{t \ge 0 }$ is a Markov process whose {semi-group} will be denoted by $\l( P_t \r)_{t \ge 0 }$. {Throughout this} paper, we assume that $(X_t)_{t\ge0}$ has a unique invariant distribution {denoted by $\pi$}. Such {a} property arises in particular under Lyapunov assumptions and non-degeneracy of the diffusion coefficient $\sigma$ (for background, see $e.g.$ \cite{meyn_tweedie,pages_survey}). \smallskip

For such a diffusion process, we denote by $\l( \bar{X}_{n\pas}^{\pas,x_0} \r)_{n \in \mathbb{N}} $ the related Euler (or Euler-Maruyama) scheme with constant step $\pas$ and starting point $x_0$: for $\pas>0$ and $x_0\in\ER^d$, the discretization scheme $\l( \bar{X}_{n\pas}^{\pas,x_0} \r)_{n \in \mathbb{N}} $ is  recursively defined by $\bar{X}_0^{\pas,x_0}  = x_0$ and
\begin{equation}
{\forall \; n\ge0,\quad}\bar{X}_{(n+1)\pas}^{\pas,x_0} = \bar{X}_{n\pas}^{\pas,x_0} + \pas b(\bar{X}_{n\pas}^{\pas,x_0}) + \sigma(\bar{X}_{n\pas}^{\pas,x_0} )\l( B_{(n+1)\pas}-B_{n\pas} \r).
\end{equation}
We also {introduce} one of its continuous-time extensions, sometimes {called} \textit{genuine} continuous-time Euler scheme given by: for all $n\in\mathbb{N}$ and for all $t \in [ n\pas, (n+1)\pas )$,

\begin{equation*}
\bar{X}_t^{\pas,x_0}: = \bar{X}_{n\pas}^{\pas,x_0} + (t-n\pas)b \l( \bar{X}_{n\pas}^{\pas,x_0} \r) + \sigma(\bar{X}_{n\pas}) \l( B_t - B_{(n+1)\pas} \r). 
\end{equation*}
{This continuous-time extension is sometimes called \textit{pseudo-diffusion} since it satisfies}
\begin{equation}\label{pagesbargamma}
 \bar{X}_t^{\pas,x_0} = x_0 + \int_0^t b(\bar{X}_{\underline{s}_\pas}^{\pas,x_0}) \mathrm{d}s+  \int_0^t \sigma(\bar{X}_{\underline{s}_\pas}^{\pas,x_0}) \mathrm{d}B_s,
\end{equation}
for all $t\ge0$, where for $\eta>0${,}\begin{equation}
\underline{t}_\eta{:= \mathrm{max} \{k \ge 0,  k\eta \le t \}}{.}
\end{equation}
If no confusion arises, we will sometimes write $\underline{t}$ instead of $\underline{t}_\pas$, and $\bar{X}_t$ or $\bar{X}_t^{\pas}$  instead of $\bar{X}_t^{\pas,x_0}$, in order to alleviate the notations.\smallskip


\noindent  Now, let us come back {to} the literature on numerical approximation of invariant distributions of diffusion processes and on multilevel methods.\smallskip

\noindent \textbf{Ergodic approximation and Gibbs approximation}. There exists a huge literature on the numerical approximation of the invariant distribution $\pi$ based on such discretization schemes. For a general diffusion process, \cite{talay} studies the convergence  of an algorithm based on the occupation measure of the Euler scheme almost surely defined (with continuous-time notations) by:  
$$\nu_T^\pas:=\frac{1}{T}\int_0^T \delta_{\bar{X}_{\un{s}_\pas}^\pas} ds, \quad T>0.$$
In order to manage the long-time and discretization errors in the same time, \cite{LP1,LP2} develop the same type of algorithms for Euler schemes with decreasing step sequence (in the same spirit, see \cite{lemaire2,mattingly_stuart,PP1,PP3,panloup1} for refinements or extensions to more general models). In the previous references, it is worth noting that the objective is to approximate the generally unknown physical equilibrium of a given stochastic dynamical system. The aim is thus different from the MCMC algorithms which aim at sampling a given explicit probability $\pi$ ({in} the most efficient way). \smallskip

\noindent {Nevertheless,  the above methods can certainly be used in view of MCMC-type objectives when they are applied to diffusions with an explicit invariant distribution. This is the case when one considers the (overdamped) Langevin diffusion 
\begin{equation}\label{langevin_sde}
dX_t=-\sigma^2\nabla U(X_t)dt+\sqrt{2}\sigma dB_t,
\end{equation}
where $\sigma$ is a positive number and $U:\ER^d\rightarrow\ER$ is a coercive  function (such that $e^{-{U}}$ is integrable on $\ER^d$). It is well-known that the unique invariant distribution of \eqref{langevin_sde} is the \textit{Gibbs  distribution} $\pi$ defined by, 
$$\pi(dx):=\frac{1}{Z_{U}} e^{-{U(x)}}\lambda_d(dx), \quad Z_{U}=\int e^{-{U(x)}}\lambda_d(dx).$$}
The study of the long-time behavior of Euler-Maruyama schemes of \eqref{langevin_sde} has been the topic of numerous papers in the last years. Among others, we can refer to \cite{durmus_moulines,durmus_moulines2,Durmus_Majewski, dalalyan,dalalyan_karagulyan,Flammarion_Wainwright} where the authors generally focus on the (Wasserstein, Total Variation,\ldots) distance $d$ between the distribution of the Euler scheme and $\pi$ and optimize the step and the time in order to minimize the number of iterations of the Euler scheme which is necessary to obtain $d({\cal L}(\bar{X}_{n_\varepsilon\pas_\varepsilon}^{\pas_\varepsilon}),\pi)\le \varepsilon$ (for a given $\varepsilon$).
In particular, these papers focus more on the bias than on the variance. In view of applications in machine learning, the authors generally emphasize the dependence {on} the dimension $d$ of the cost of computation. We will come back later {to} this point and {to} the existing results compared with ours (see Remark \ref{rem:comparison}).\\

\noindent \textbf{Multilevel Langevin and $\varepsilon$-approximations.} Multilevel methods, pionnered {by \cite{heinrich_multilevel}} and \cite{giles} (see also \cite{kebaier_ahmed}), and based on appropriate combinations of rough and refined approximations of the target, belong to the family of strategies for speeding up Monte-Carlo methods by bias reduction. The main idea of multilevel methods is to {(try to)} bring correcting layers {with low variance} to a rough approximation of a target. Multilevel methods received a lot of success in numerical applications, especially in discretization methods for diffusions (but also in other problems such as the approximation of nested expectations). For instance, {in} the classical problem of computing $\ES[f(X_T)]$ ({with $T>0$}), such methods are known to produce a complexity which is (almost) proportional to unbiased\footnote{When a random variable $Y$ can be simulated exactly ($i.e.$ without bias), getting an $\varepsilon$-approximation  of $\ES[Y]$ with the standard Monte-Carlo approximation $N^{-1}\sum_{k=1}^N Y_k$ (where $(Y_k)_k$ is an $i.i.d.$ sequence such that $Y_1\sim Y$) requires $N_{\varepsilon}={\rm Var}(Y) \varepsilon^{-2}$ simulations of $Y$.} methods. More precisely, for a given $\varepsilon>0$, the parameters of the multilevel procedure can be {calibrated} in such a way that the {required} number of iterations of the Euler scheme {for an \textit{$\varepsilon$-approximation} (see \cref{sec:notations} for a definition)  is} proportional to $\varepsilon^{-2}\log^2\varepsilon$ in general or to $\varepsilon^{-2}$ under additional assumptions (which are true for additive diffusions).\\

\noindent {Concerning} the computation of the invariant distribution of a diffusion, multilevel methods have already been studied in {\cite{fang-giles}}, \cite{giles_szpruch_invariant} and \cite{mainPPlangevin}. In \cite{fang-giles} and \cite{giles_szpruch_invariant}, the procedure is based on a standard multilevel Monte-Carlo approach with discretization schemes of a Langevin diffusion and adapted time horizons and produces a complexity proportional to $\varepsilon^{-2}|\log \varepsilon|^3$ {or to $\varepsilon^{-2}$ under additional assumptions}. In \cite{mainPPlangevin}, written in  a {multiplicative} setting and based on a so-called Multilevel-Romberg weighted combination of occupation measures of discretization schemes with decreasing step, the algorithm has a complexity proportional to $\varepsilon^{-2}|\log \varepsilon|$ (see Section \ref{sec:notations} below for our definition of complexity). In terms of $\varepsilon$, these approaches generate a real gain compared with the non multilevel ones (mentioned above) which generally  produce a complexity proportional to $\varepsilon^{-3}$. However, {the above references do not calibrate the dependence of the procedure with respect to the other parameters and especially with respect to the dimension, which may be of first importance in applications. In this paper, our objective is thus to provide a procedure and some related results which exhibit an $O(\varepsilon^{-2})$-complexity combined with some sharp bounds on these parameters.} \\

\noindent \textbf{Idea of the algorithm.} \noindent Before detailing our contributions, let us briefly describe the construction of the algorithm (the precise procedure will be detailed in Section \ref{sec:setmain}) and give some comments. Our procedure is based on occupation measures  as in  \cite{mainPPlangevin}. {With such an approach, we thus aim to take advantage of the (pathwise) convergence of the occupation measure of a Markov process towards its invariant distribution.  Compared with \cite{mainPPlangevin}, we use a  simpler multilevel approach since we will use Euler schemes with constant steps and do not introduce weights in the average (in particular, the weighted approach used in \cite{mainPPlangevin} introduces many technical difficulties which seem to be hard to overcome in view of quantitative bounds)}. 

More precisely, our strategy is based on an almost telescopic sum of differences of occupations measures of Euler schemes with step $\pas_r=\pas_0 2^{-r}$, $r=0,\ldots,R$ with  $R\in\mathbb{N}^*$ (this means that there are $R+1$ levels). Recall that $\pi^\pas$ denotes the invariant distribution of the Euler scheme with step $\pas$. Let $f:\ER^d\rightarrow\ER$. At the starting point, 
we try to mimick the telescopic sum  
$$\pi^{\pas_R}(f)= \pi^{\pas_0}(f)+\sum_{r=1}^R \pi^{\pas_r}(f)-\pi^{\pas_{r-1}}(f)$$
in order to generate a procedure with a bias close to $\pi(f)-\pi^{\pas_R}(f)$ but with a probability
$\pi^{\pas_R}$ viewed as a correction of $\pi^{\pas_0}$ by a sequence of (correcting) levels. With an ``occupation measure point of view'', we mimick the above decomposition  by considering the  procedure
\begin{equation}\label{eq:heuristic}
\nu_{\tau,T_0}^{\pas_0}(f)+\sum_{r=1}^R \nu_{\tau,T_r}^{\pas_r}(f)-\nu_{\tau,T_r}^{\pas_{r-1}}(f),
\end{equation}
where $\nu_{\tau,T}^\pas(f)=\frac{1}{T-\tau}\int_\tau^T f(\bar{X}_{\un{s}_\pas}^\pas) ds$ and $\tau,$ $T_0$, \ldots, $T_R$ are  positive numbers.  {The terms from $r=1$ to $R$ play the role of the correcting layers} and are based on couplings of Euler schemes with steps $\gamma_{r-1}$ and $\gamma_r=\gamma_{r-1}/2$. Without going more into the details of the construction, let us give several hints.
\begin{itemize}
\item{} The parameter $\tau$ must be {viewed} as a \textit{warm-start}: we choose to average the path after a time where the starting point has been slightly forgotten in order to reduce the bias induced by the long-time error.  In fact, this slight modification of the average is necessary to capture the complexity in ${\cal O}(\varepsilon^{-2})$ ($i.e.$ to cancel the logarithmic terms of \cite{mainPPlangevin}).
\item{} In a classical Monte-Carlo Multilevel setting, $i.e.$ based on spatial average and not on time average ($i.e.$ on occupation measures), the idea is to simulate a large number of Euler schemes with {large} step and less and less paths of Euler schemes with {thin} step (since the cost of simulation increases with the refinement of the step). With an occupation measure point of view, this heuristic is replaced by the following assumption:
$$T_0>T_1>\ldots >T_R,$$
which means that the length of the path of the Euler scheme decreases with the step size {so that the number of copies of the Euler schemes is here encoded by the length of the path. In particular, the largest horizon corresponds to the Euler scheme with largest step which has the role of controlling the variance induced by the empirical mean.} We will see that $(T_r)_r$ is geometrically decreasing; this is {consistent} with the geometric decrease of the step size. 
\item{} In \eqref{eq:heuristic}, we did not {specify} the choice of the Brownian motions inside each level. Here, we will adopt the classical strategy: {we consider a} sequel of independent Brownian Motions $B^{(r)}$, $r=0,\ldots,R$ and {at each level}, we build  $\nu_{\tau,T_r}^{\pas_r}$ and $\nu_{\tau,T_r}^{\pas_{r-1}}$ with this Brownian Motion $B^{(r)}$.  {This means that on the one hand, the levels are independent and on the other hand, the Euler schemes involved in a level $r\ge1$ are built with a \textit{synchronous coupling} (since they are driven by the same Brownian motion).}
\end{itemize}

\noindent \textbf{Contributions and plan of the paper.} The first objective of this work is {to provide a general diffusion setting in which the invariant distribution can be approximated by this combination of occupation measures with a complexity proportional to $\varepsilon^{-2}$}. In the same time, we also {want} to give quantitative bounds and to answer to the following question: Is a multilevel method able to reduce the cost in $\varepsilon$ without worsening the dependence {on} the other parameters (and especially {on} the dimension) ? The answers to these questions, given in Sections \ref{sec:setmain2} and \ref{sec:langevinsc}, {are summarized} below.
\begin{itemize}
\item{} {We answer to the first question in} Theorem \ref{maintheo}, {which is stated under general} assumptions on the behavior of the Euler scheme (ergodicity, long-time \textit{confluence} of the paths, bounds on the distance between $\pi^\pas$ and $\pi$ and on the moments). This result shows that for a given positive $\varepsilon$, we can tune the parameters of the multilevel procedure in such a way that for any $1$-Lipschitz function $f:\ER^d\rightarrow\ER$, an $\varepsilon$-approximation of $\pi(f)$ can be obtained with $\mathfrak{C}_2\varepsilon^{-2}$  or $\mathfrak{C}_2\varepsilon^{-2}|\log \varepsilon|^3$ iterations of the Euler scheme. {Furthermore,} $\mathfrak{C}_2$ is an (almost)\footnote{By ``almost'', we mean ``up to universal constants'' (see Section \ref{sec:notations} for details).}  explicit function of the parameters involved in the assumptions. {Based on some classical estimates of the occupation measure of a Markov process, the main novelty here  is to exhibit some precise conditions on the algorithm and some quantitative bounds on the complexity which guarantee  an $\varepsilon^{-2}$-complexity.}
\item{}  Even though Theorem \ref{maintheo} potentially  applies to general diffusions (see Remark \ref{rem:theo21}), we choose to focus on additive diffusions with strongly contractive drift, and especially on over-damped Langevin diffusions (with strongly uniformly convex potential) in view of applications to Gibbs sampling.  {This is the purpose of} Section \ref{sec:langevinsc} where a series of results {provide concrete}  multilevel procedures for $\varepsilon$-approximations in $L^2$ with (almost) explicit bounds on the complexity. The results are divided into two parts. {In the first one}, $b$ is a (contractive) Lispchitz ${\cal C}^1$-vector field, or equivalently, $U$ is ${\cal C}^2$ with Lipschitz gradient in the case of Langevin diffusions. {In the second part},  $b$ is a ${\cal C}^2$-vector field with bounded derivatives up to order $2$. In this case, refined expansions lead to $L^2$-bounds \textit{of order 1} for the Euler scheme and  allow to apply Theorem \ref{maintheo} with  friendlier parameters.
The main results of this section are Theorems \ref{maincoro1} and \ref{maincoro2}  in the case of a general vector field $b$, and Corollaries \ref{cor:gibbsI} and  \ref{cor:gibbs2} in the case of Langevin diffusions. In the first part (when $b$ is only ${\cal C}^1$), we obtain some bounds on the complexity in ${\cal O}(d\varepsilon^{-2}|\log (d\varepsilon)|^3)$ whereas in the (refined) second part, the complexity is bounded by ${\cal O}(d\varepsilon^{-2})$. \\

\noindent However, the complexity also depends on the (intrinsic) parameters of the model: the Lipschitz constant $L$ of $b$ and the contraction parameter $\alpha$ (corresponding respectively to the largest and lowest eigenvalues $\bar{\lambda}_U$ and $\underline{\lambda}_U$ of the Hessian of $U$ when $b=-\nabla U$). We thus also {detail} the dependence {on} these parameters, which up to a logarithmic term, is proportional to $L^2/\alpha^3$. \\

\noindent Finally, in Corollaries \ref{cor:gibbsI} and \ref{cor:gibbs2}, we apply these results to the particular case $b=-\nabla U$ and  optimize the choice of the diffusion coefficient $\sigma$ in order to kill the logarithmic term and to obtain a normalized procedure where the parameters have a nice and simple form (for instance, $\pas_0=1/2$). {These quantitative bounds with respect to $d$, $\varepsilon$, $L$ and $\alpha$ are the main contributions of this paper. They rely on a careful study of the multilevel strategy given in Theorem \ref{maintheo} combined with sharp bounds on the long-time behavior of the Euler scheme in the strongly convex setting (see in particular \cref{lem:boundEuler} and \cref{prop:L2error}).}
\end{itemize}
{In Section \ref{sec:numillu}, we propose several numerical illustrations with {different}  models which allow to test the efficiency of our methods 
with respect to the parameters {and to compare with other classical methods (in a Bayesian example, see \cref{subsec:MCMCbayesian})}. We also open to some  perspectives for reducing the influence of $\alpha$ and $L$ on the complexity of the method {and finally test our algorithm in a simple non convex setting in order to show that theoretical extensions may be tackled in such a setting (in a future work).}\\

\noindent Sections \ref{sec:quantitativecontrol}, \ref{sec:maintheo} and \ref{sec:maintheo23} are devoted to the proofs of the main theorems. In Sections \ref{sec:quantitativecontrol} and \ref{sec:maintheo}, we prove Theorem \ref{maintheo} {with the help of an accurate} study of the bias/variance errors. The proofs related to additive diffusions with strongly contractive drift (including Langevin diffusions) are written in Section \ref{sec:maintheo23}. 

\section{Setting and main results}\label{sec:setmain}
\subsection{Notations/Definitions}\label{sec:notations}
{We list below the main notations. A list of all the specific symbols is also given in \cref{sec:listsymbols}.}
\begin{itemize}
\item{} The usual scalar product on $\ER^d$ is denoted by $\langle\,,\,\rangle$ and the induced Euclidean norm by $|\,.\,|$. The set $\mathbb{M}_{d,d}$ refers to the set of real $d \times d$. The Frobenius norm on $\mathbb{M}_{d,d}$ is denoted by $\|\,.\,\|_F$: for any $A\in\mathbb{M}_{d,d}$, $\|A\|_F^2=\sum_{1\le i,j\le d} A_{i,j}^2.$
\item{} The Lipschitz constant of a given (Lipschitz) function $f:\ER^d\rightarrow\ER$ is denoted by $[f]_1$: $[f]_1=\sup_{x,y\in\ER^d} |f(x)-f(y)|.|x-y|^{-1}$.
A function $f:\ER^d\rightarrow\ER$ is ${\cal C}^k$, $k\in\mathbb{N}$, if all its partial derivatives are well-defined and continuous up to order $k$. The gradient and Hessian matrix of $f$ are respectively denoted by $\nabla f$ and $D^2f$. 
\item{} The probability space is denoted by $(\Omega,{\cal F},\PE)$. The $L^p$-norm on $(\Omega,{\cal F},\PE)$ is denoted by $\|\,.\,\|_p$.
\item{} \textbf{$\varepsilon$-approximation}: We say that ${\cal Y}$ is an  $\varepsilon$-approximation of a real number $a$ (for the $L^2$-norm), if $\|{\cal Y}-a\|_2=\ES[|{\cal Y}-a|^2]^{\frac{1}{2}}\le \varepsilon$. Equivalently, ${\cal Y}$ is said to be an  $\varepsilon$-approximation of $a$ if the related Mean-Squared Error (MSE) is lower than $\varepsilon^2$.

\item{} \textbf{Complexity/$\varepsilon$-complexity}: For a random variable ${\cal Y}$ built with some iterations of a standard Euler scheme, we {call complexity and } denote by ${\cal C}(\cal Y)$, the number of iterations of the Euler scheme which is needed to compute ${\cal Y}$. For instance, ${\cal C}(\bar{X}_{n\pas}^{\pas})=n$. The $\varepsilon$-complexity is the complexity of an algorithm which produces an $\varepsilon$-approximation. 

\item{} \textbf{Universal constant}: A positive number which does not depend on any parameter of the problem is called a universal constant and is denoted by $\cuniv$. We will write $a\lesssim_{uc} b$ if $a\le \cuniv b$.\\
\end{itemize}
\subsection{Design of the algorithm}
We now {detail the} construction of the multilevel procedure. Let $x_0\in\ER^d$ (starting point of the Euler scheme),  $\lev \in \mathbb{N}^*$ (number of correcting levels), $ \l(T_\ind\r)_{0 \le \ind \le \lev}\label{pagetr}$ be {a decreasing} sequence of positive times and   $\l(\pas_\ind\r)_{0 \le \ind \le \lev}\label{pagepasr}$ be the decreasing sequence of step sizes defined by $\pas_r=\pas_0 2^{-r}$ and $\tau$ be a positive number.\\

\noindent For these parameters, we denote by $\mathcal{Y}(\lev,\l(\pas_\ind\r)_\ind,\tau,\l(T_\ind\r)_\ind,x,.)$, the empirical probability measure defined by: for any Borel measurable function $f:\ER^d\rightarrow\ER$,
 \begin{equation}\label{eq:estimat}
 \begin{split}
\mathcal{Y}(\lev,\l(\pas_\ind\r)_\ind,\tau,\l(T_\ind\r)_\ind,x,f) &:= \frac{1}{T_0-\tau}\int_{\tau}^{T_0} f(\bar{X}_{\un{s}_{\pas_0}}^{\pas_0,x_0, B^{(0)}}) ds\\
&+ \sum_{\ind=1}^\lev \frac{1}{T_r-\tau}\int_\tau^{T_r} f(\bar{X}_{\un{s}_{\pas_{r-1}}}^{\pas_{r},x_0, B^{(r)}}) -f(\bar{X}_{\un{s}_{\pas_{r-1}}}^{\pas_{r-1},x_0, B^{(r)}}) ds,
\end{split}
\end{equation}
{where $\{B^{(\ind)}, \;\ind=0,\ldots,\lev\}$, denotes a sequence of $\lev+1$ independent Brownian motions. In particular, if $\tau$ and $T_0$ (resp. $T_r$, $r=1,\ldots,R$) are  multiples of $\pas_0$ (resp.  of $\pas_{r-1}$), the above definition takes the form
\begin{equation*}\label{eq:estimat2}
 \begin{split}
\mathcal{Y}(\lev,\l(\pas_\ind\r)_\ind,\tau,\l(T_\ind\r)_\ind,x,f) &:= \frac{1}{n_{\pas_0}(T_0)-n_{\pas_0}(\tau)}\sum_{k=n_{\pas_0}(\tau)}^{n_{\pas_0}(T_0)-1} f(\bar{X}_{k{\pas_0}}^{\pas_0,x_0, B^{(0)}}) \\
&+ \sum_{\ind=1}^\lev \frac{1}{n_{\pas_{r-1}}(T_r)-n_{\pas_{r-1}}(\tau)}\sum_{k=n_{\pas_{r-1}}(\tau)}^{n_{\pas_{r-1}}(T_r)-1} \l(f(\bar{X}_{k\pas_{r-1}}^{\pas_r,x_0, B^{(r)}}) -f(\bar{X}_{k{\pas_{r-1}}}^{\pas_{r-1},x_0, B^{(r)}})\r).
\end{split}
\end{equation*}
where for some given positive $T$ and $\pas$, $n_\pas(T)=\max\{k,k\pas\le T\}\label{pagengamma}$, $i.e.$ $n_\pas(T)$ is the discretization index related to $T$ when the step is equal to $\pas$ (in the general case, the border terms of the above expression must be modified).
\begin{rem} 
It is worth noting that in the correcting levels ($r=1,\ldots,R$), the Euler schemes of steps $\pas_r$ and $\pas_{r-1}$ are averaged at times $k\pas_{r-1}$ only, $i.e.$ at discretization times of the Euler scheme with thickest step. In particular, one could wonder why one does not average {over all} discretization times for the Euler scheme with finest step $\pas_r$. In fact, such an average would generate an additional error (with a size proportional to $\sqrt{\pas_r}$) which is not negligible in the case $\bea>1$ of Theorem \ref{maintheo} below (case which leads to a complexity proportional to $\varepsilon^{-2}$).
\end{rem}

\noindent \textbf{Complexity of the algorithm.} With respect to the definition given in Section \ref{sec:notations}, we remark  that when $T_r$, $r=0,\ldots,R$ are multiples of the given step sequence, the complexity of ${\cal Y}$ satisfies:
$${\cal C}({\cal Y})=\frac{T_0}{\pas_0}+\sum_{r=1}^R \l(\frac{{T_r}}{\pas_r}+\frac{{T_r}}{\pas_{r-1}}\r).$$
Note that for the sake of simplicity, we do not recall  the parameters of ${\cal Y}(\lev,\l(\pas_\ind\r)_\ind,\tau,\l(T_\ind\r)_\ind,x,.)$ in the notation ${\cal C}({\cal Y})$.

\subsection{A general result}\label{sec:setmain2}
The aim of this section is to state a  result about the $\varepsilon$-complexity of our multilevel ergodic strategy under appropriate general assumptions.\\

\noindent Let $f:\ER^d\rightarrow\ER$ be a Lipschitz continuous function with Lipschitz constant denoted by $[f]_1$. We introduce the following series of assumptions depending on $f$, on a positive $\eta_0\label{pageetazero}$ (which in the sequel can be taken as the largest step size used in the multilevel procedure), and on $x_0\label{pagexzero}$ which is the starting point of each Euler scheme involved in the multilevel procedure. In the following assumptions, we recall that the invariant distribution $\pi^\pas\label{pagepipas}$ of $(\bar{X}_{n\pas})_{n\ge0}$ is assumed to exist and to be unique.\\

\noindent {The following assumption is assumed to hold for a positive $\alpha$.}\\

\noindent $\HUN\label{HUN}$ (Convergence to equilibrium): For all $x\in\ER^d$, there exists a finite constant $\1(x)$ such that,  for every $\pas \in(0,\eta_0]$, for every $t\ge0$,
\begin{equation*}
\l| \mathbb{E}_x\l[f\l(\bar{X}_{\un{t}_\pas}^{\pas,x}\r)\r] - \pi^\pas(f) \r| \le \1(x) [f]_1 e^{- {\alpha \un{t}_\pas} }.
\end{equation*}
Such an assumption is adapted to the case, where exponential convergence  in $1$-Wasserstein distance holds for the diffusion and extends to the Euler scheme with sufficiently small step (with a contraction parameter independent of the step). Also note that in the above assumption and in what follows, we implicitly assume that $\pi^{\pas}$ exists (and is unique) for every $\pas\in(0,\eta_0]$. \\

\noindent The following assumption is assumed to hold for $\bea\in[1,2]$:\\

\noindent  $\HDEUX$ ($L^2$-confluence)  A positive constant $\2$ exists such that for all $\pas\in(0, \eta_0]$, 
\begin{equation}\nonumber\label{defbea}
\sup_{t\ge0}\l\| \bar{X}_{\un{t}_\pas}^{\pas,x_0}- \bar{X}_{\un{t}_\pas}^{\pas/2,x_0} \r\|_2\le \2 \pas^{\frac{\bea}{2}}.
\end{equation} 
Note that such an assumption is usually proved by controlling the distance between the Euler scheme and the diffusion (by dividing the error into two parts). In the next section, we will see that for additive diffusion with \textit{strongly contractive} drift,   $\HDEUX$ can be proved with $\bea=1$ or $\bea=2$ with two alternative proofs but leading to constants $\2$ which are strongly different, depending on the Lipschitz constant of $b$ in the first case and {on} the size of the Jacobian matrix $\nabla b$  and the Laplacian $\Delta b$ of the drift $b$ in the second case (see Propositions \ref{prop:stronconvbea1} and \ref{prop:stronconvbea2} for details).\\

\noindent The next assumption is a weak error bound on the distance of the invariant distribution of the diffusion and the one of the Euler scheme {(depending on $\bea$ and $\delta$)}:

\noindent  $\HTROIS$:  there exist{s} a positive constant $\3$  such that for every $\pas\in(0,\eta_0]$,
\begin{equation*}
\l|\pi(f)-\pi^\pas(f)\r| \le \3 [f]_1 \pas^\delta,  
\end{equation*} 
where $\delta\in[ 1/2,1]$ if {$\bea=1$} and $\delta\in(\frac{1+\bea}{4},1]$ if {$\bea>1$}.\\


\noindent The last assumption below is related to the control of the moments of the Euler scheme. It also involves the function $c_1$ defined in $\HUN$.\\

\noindent  $\HQU$: There exists a constant $\4 \ge 0$   such that   for all $\pas\in(0,\eta_0]$, 
\begin{equation*}
\sup_{t\ge0} \left(\|\bar{X}_{t}^{\pas,x_0}-x_0\|_2+\| \1(\bar{X}_{t}^{\pas,x_0})\|_2\right) \le \4.
\end{equation*} 
\begin{theoreme} \label{maintheo}
Assume {$\mathbf{(H_i)}$, $i=1,\ldots,4$} with $\alpha\in(0,1]$, $\bea\in[1,2]$ and for some given $x_0\in\ER^d$ and $\eta_0\in(0,1/2]$. 
Let $f:\ER^d\rightarrow\ER$ be a Lipschitz continuous function. For $\varepsilon \in(0,1)$,  assume that 
\begin{equation}\label{eq:choixpar}
\begin{split}
&\pas_\ind=\pas_0 2^{-\ind}, \quad  \lev_{\varepsilon}= \lceil \frac{1}{\delta}\log_2(r_0{{\varepsilon}}^{-1} )\rceil, \quad T_\ind= \begin{cases}\Tfrak {\varepsilon}^{-2}  2^{-\frac{1+\bea}{2}\ind}&\textnormal{if $\bea>1$}\\
\Tfrak {\varepsilon}^{-2} R_\varepsilon^2 2^{-\ind}&\textnormal{if $\bea=1$.}
\end{cases}
\end{split}
\end{equation}
with $\pas_0\in(0,\eta_0]$, $r_0\ge 1$ and $\Tfrak>0$.\\
\noindent (i) Assume that $\tau\in[\tau_1\log(\varepsilon^{-1})\wedge \frac{1}{2}T_{R_\varepsilon}, \frac{1}{2}T_{R_\varepsilon}]$ with $\tau_1> \frac{1+\bea-2\delta}{2\alpha\delta}$. Then, there exist some constants $\mathfrak{C}_1$ and  $\mathfrak{C}_2$ which do not depend on $f$ (which can be made explicit in terms of the parameters) such that for any $\varepsilon\in(0,1]$,
\begin{equation}\label{eq:pluspetitqueepsilon}
\| \mathcal{Y}(\lev,\l(\pas_\ind\r)_\ind,\tau,\l(T_\ind\r)_\ind,f)- \pi (f) \|_2 \le\mathfrak{C}_1 [f]_1 \varepsilon,
\end{equation}
with a complexity cost,
\begin{equation}\label{relatedcosteps}
\mathcal{C}(\mathcal{Y}) :=\mathcal{C}({\cal Y}(\lev,\l(\pas_\ind\r)_\ind,\tau,\l(T_\ind\r)_\ind,f))\le \mathfrak{C}_2 \begin{cases}  {\varepsilon}^{-2}&\textnormal{if $\bea>1$}\\
 {\varepsilon}^{-2} R_\varepsilon^3&\textnormal{if $\bea=1$}.
\end{cases}
\end{equation} 
where  $\mathfrak{C}_2 = {\cbea} \gamma_0^{-1}{{\Tfrak}}$ with ${\cbea}= \left((1+\frac{3}{2}(2^{\frac{\bea-1}{2}}-1)^{-1}\right)$ if $\bea>1$ and ${\cbea}=\frac{5}{2}$ if $\bea=1$.\\

\noindent (ii) {Assume that $r_0\,{\ge} \,1\vee (\3 \pas_0^\delta)$ and  $\Tfrak\ge \Tfrak_0$, $\tau\in[ \tauun|\log \varepsilon|+\taudeux, \frac{1}{2}T_{R_\varepsilon}]$, $\varepsilon\in(0,\varepsilon_0)$ where $\Tfrak_0$, $\tau_1$, $\tau_2$ and $\varepsilon_0$ are given by \eqref{eq:choiceoftheparameters} and \eqref{tauuntaudeux}. Then,  \eqref{eq:pluspetitqueepsilon} and \eqref{relatedcosteps} hold true for any $\varepsilon\in(0,\varepsilon_0)$ with
$\mathfrak{C}_1\lesssim_{uc} 1$.}
\end{theoreme}
The above theorem  exhibits a family of parameters which lead to a complexity proportional to $\varepsilon^{-2}$ ($resp.$ $\varepsilon^{-2} |\log \varepsilon|^3$) if $\bea>1$ ($resp.$ $\bea=1$).  The first part is adapted to the case where we have few informations about the parameters of the assumption and thus on the constants $\mathfrak{C}_1$ and $\mathfrak{C}_2$ involved in the result. In particular, if $\alpha$ is unknown, we suggest to choose $\tau=\rho T_{R_\varepsilon}$ with $\rho\in(0,1/2]$ (note that in view of alleviating the notations, we omit the dependence {on} $\varepsilon$ for $\tau$).\\

\noindent In the second part, we show that one can tune the parameters  of the procedure in order to obtain an $\varepsilon$-approximation\footnote{More precisely, when $\mathfrak{C}_1=1$, the $L^2$-error is lower than $[f]_1\varepsilon$. To obtain $\varepsilon$, it is certainly enough to replace $\varepsilon$ by $\varepsilon/[f]_1$.}
, up to a universal constant, with an explicit complexity. Note that this universal constant could be avoided with a slight adaptation of the proof (see in particular the proof of Proposition \ref{prop:theo1precis}) which should lead in particular to $\mathfrak{C}_1=1$  instead of  $\mathfrak{C}_1\lesssim_{uc} 1$ (which in turn would modify  ${\cbea}$). Nevertheless, this still introduces many technicalities in the result. We thus chose to introduce universal constants for the sake of readability and will show in some numerical illustrations that this approximation is reasonable. \\

%

\noindent It is also important to remark {that} this second stage certainly depends on the knowledge of the parameters of the diffusion (which is not always accessible in practice). In the next section, we will show that in the strongly convex setting, we can obtain some bounds on the parameters which lead to an accurate estimation of $\mathfrak{C}_2$ in terms of the dimension. 

\noindent Finally, note that in the second part, the result holds true for any $\varepsilon\in(0,\varepsilon_0)$. This  technical  constraint ensures that the warm-start $\tau$ is lower than $\frac{1}{2}T_{R_\varepsilon}$ (which is necessary to get a ``real'' occupation measure). In practice, the simplest is to replace $\tau$ by $\tau\wedge \frac{1}{2}T_{R_\varepsilon}$ in order to avoid such a problem.
\begin{rem} \label{rem:theo21}
$\rhd$ We chose to state this result for a given function $f$.  {Nevertheless, if $\1$ and $\3$ in $\HUN$ and $\HTROIS$ do not depend on $f$, this is certainly possible to write the result, uniformly in the class of Lipschitz functions. Note that with the help of the Kantorovich-Rubinstein representation of $1$-Wassertein distances (see \emph{e.g.} \cite{villani} for background), assuming that $\1$ and $\3$ are uniform in the class of Lipschitz functions is equivalent to suppose that $\HUN$ and $\HTROIS$ are replaced by 
$${\cal W}_1({\cal L}(\bar{X}_{\un{t}_\pas}^{\pas,x}), \pi^\pas)\le \1(x)e^{- {\alpha \un{t}_\pas} }\quad \textnormal{and by}\quad{\cal W}_1(\pi,\pi^\pas) \le \3 \pas^\delta.$$
In this case, Theorem \ref{maintheo} still holds true replacing \eqref{eq:pluspetitqueepsilon} by
\begin{equation}
\sup_{f,[f]_1\le 1} \ES[|\mathcal{Y}(\lev,\l(\pas_\ind\r)_\ind,\tau,\l(T_\ind\r)_\ind,f)- \pi (f) |^2]^{\frac{1}{2}} \le\mathfrak{C}_1  \varepsilon.
\end{equation}
}

%
$\rhd$  In the next section, we will apply the result to additive diffusions in order to be able to get quantitative bounds. Nevertheless, it is worth noting that the result may apply to any (non degenerated) multiplicative diffusion satisfying $\mathbf{(H_i)}$, $i=1,\ldots,4$. For instance, it could be shown that if the diffusion satisfies the \textit{strong confluence} Assumption  $\mathbf{(C_s)}$ of \cite{mainPPlangevin}, the assumptions hold with $\bea=1$ and $\delta\in[1/2,1]$ (see \cite{panloup_pages_lemaire} for results on confluence of diffusions). More generally, the result is in fact not specific to diffusions and may hold for any non degenerated Markov process, equipped with Markovian discretization schemes satisfying assumptions $\mathbf{(H_i)}$, $i=1,\ldots,4$. \\

\noindent {$\rhd$ In the references \cite{McLeish_2011,glynn_rhee_2014,vihola_multilevel}, some debiased Multilevel Monte-Carlo methods have been introduced and studied. In these papers, the idea is to randomize the number of layers (typically with a Poisson distribution) in order to produce completely unbiased estimators of the target. However, such a method seems to rely on the property that the last layer is asymptotically without bias. In this infinite horizon problem, such an adaptation seems to be complicated since even the last layer contains a long-time error which does not vanishes when the step size goes to $0$ (see \cref{rem:multilevelstand} for other details in this direction). Nevertheless, in the spirit of \cite{giles_szpruch_invariant}, in order to get a vanishing long-time error, an idea could be to restart each layer from the final time of the previous one. Note that such an idea is unfortunately incompatible with some parallelization of the layers but it may still deserve to be studied.}
\end{rem}

\subsection{Application to {uniformly} strongly convex additive diffusions}\label{sec:langevinsc}
In this section, we want to focus on the effect of our multilevel strategy on the numerical approximation of the invariant distribution of an additive diffusion when $b$ is strongly contractive, $i.e.$ satisfying  the following Assumption $\Cs$: %
%
%


\noindent $\Cs$ For all $x,y\in\mathbb{R}^d$,
\begin{equation*}
\langle b(y)-b(x) , y-x \rangle \le -\alpuc |x-y|^2.
\end{equation*}
In the context of numerical approximation of Gibbs distributions, this corresponds to the case where $U$ is uniformly strongly convex. If $U$ is ${\cal C}^2$, this is equivalent to suppose that  the Hessian matrix $D^2U$ satisfies: $D^2U\ge \alpha I_d$ with $\alpha>0$ (in a sense of symmetric matrices).  Note that  this assumption can be viewed as restrictive. However, our main objective in this paper is to sharply evaluate the effect of such multilevel strategies in this nice and benchmark setting (see Remark \ref{rem:comparison} for a discussion about potential extensions). \smallskip

Now, note that, with the help of the inequality $\langle u,v\rangle\le (2\alpuc)^{-1}|u|^2+(\alpuc/2)|v|^2$), $\Cs$ (applied with $y=0$)  implies the Lyapunov (or stability) assumption 
$$ 2 \langle b(x),x\rangle \le 2 \langle b(0),x\rangle-2\alpuc|x|^2\le \frac{|b(0)|^2}{\alpuc}-\alpuc|x|^2.$$
Such a Lyapunov assumption classically implies the existence of $\pi$ and that of $\pi^\pas$ for $\pas\in(0,\alpha/(2\blip^2)]$ (see Lemma \ref{lem:boundEuler}$(i)$). Uniqueness follows from the non-degeneracy of the dynamical system since $\sigma>0$.


%
\noindent The following parts are dedicated to the study, in this uniformly contractive setting, of the complexity of the  related multilevel procedure and to the dependency in the dimension of their constants . As mentioned before, the results will strongly depend on the value of $\bea$. In the two next sections, we propose two types of results, with $\bea=1$ and $\bea=2$ respectively. The first case is based on simpler bounds and only requires $U$ to be ${\cal C}^2$ (with bounded Hessian matrix) but the related dependence {on} $\varepsilon$ is not completely optimal (proportional to $\varepsilon^{-2} |\log^3 \varepsilon|$). The second case will lead to a complexity proportional to $\varepsilon^{-2}$ but with refinements which require slightly more constraining  assumptions on $U$. Note that $\bea=2$ is really specific to additive diffusions whereas $\bea=1$ may extend to multiplicative diffusions (as mentioned above in Remark \ref{rem:theo21}). 



\subsubsection{\texorpdfstring{$\bea=1$}{a=1} and \texorpdfstring{$\delta=1/2$}{delta=1/2}}

\begin{prop}\label{prop:stronconvbea1} Assume $\Cs$ and  $b$ $L$-Lipschitz {with $\alpha\in(0, L\wedge 1]$}.  Let $x_0 \in \ER^d$ and $\eta_0 \in (0,{\frac{\alpha}{2\bun^2}\wedge \frac{1}{2}}]$.  Then, $\HUN$, $\HDEUX$, $\HTROIS$ and $\HQU$ hold true for $\bea=1$, $\delta=1/2$ (and for any Lipschitz continuous function $f:\ER^d\rightarrow\ER)$ with
\begin{equation}\label{eq:upsilon}
\begin{cases}
\1(x)\le |x-x_0|^2+\sqrt{2|b(x_0)|^2\l(\frac{1}{\blip^2} + \frac{2}{\laminf^2}\r)+\frac{\sigma^2 d}{\laminf}},\\
{\max(\frac{\alpha \2^2}{L^2}, \frac{\alpha\3^2}{L^2},\4^2 )\lesssim_{uc} 1\vee \l(\alpha^{-2} |b(x_0)|^2+ \alpha^{-1}\sigma^2d\r)=:{\Upsilon_1^2}}.
\end{cases}
\end{equation} 

\end{prop}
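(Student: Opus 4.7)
All four hypotheses follow from two ingredients: a uniform-in-time $L^2$ moment bound for the Euler scheme centered at $x_0$, and a synchronous coupling. As a preliminary I would establish that for every $\pas\in(0,\alpha/(2L^2)]$ and every $x\in\mathbb{R}^d$,
\begin{equation*}
\sup_{n\ge 0}\mathbb{E}\bigl[|\bar{X}^{\pas,x}_{n\pas}-x_0|^2\bigr]\le e^{-\alpha n\pas/2}|x-x_0|^2+\frac{C}{\alpha}\Bigl(\frac{|b(x_0)|^2}{\alpha}+\sigma^2 d\Bigr),
\end{equation*}
obtained by expanding the recursion around $x_0$, using the shifted strong contractivity $\langle b(y)-b(x_0),y-x_0\rangle\le-\alpha|y-x_0|^2$ coming from $\Cs$, the Lipschitz bound $|b(y)|^2\le 2L^2|y-x_0|^2+2|b(x_0)|^2$, and absorbing the quadratic-in-$\pas$ remainder thanks to $\pas L^2\le\alpha/2$. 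Taken with $x=x_0$, this yields the $\sup_t\|\bar{X}^\pas_t-x_0\|_2$ part of $\HQU$.

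For $\HUN$ I would couple $\bar{X}^{\pas,x}$ and $\bar{X}^{\pas,Y}$ (with $Y\sim\pi^\pas$) by the same Brownian motion. Setting $\Delta_n:=\bar{X}^{\pas,x}_{n\pas}-\bar{X}^{\pas,Y}_{n\pas}$, expanding $|\Delta_{n+1}|^2$ and using $\Cs$ together with $L$-Lipschitz gives
\begin{equation*}
|\Delta_{n+1}|^2\le(1-2\alpha\pas+L^2\pas^2)|\Delta_n|^2\le(1-\alpha\pas)|\Delta_n|^2,
\end{equation*}
so that $\|\Delta_n\|_2\le e^{-\alpha n\pas/2}\|\Delta_0\|_2$. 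Combined with the Lipschitz property of $f$ and the triangle inequality $\|Y-x\|_2\le|x-x_0|+\|Y-x_0\|_2$, this gives $\HUN$; the second summand of $c_1(x)$ is obtained by passing to the limit $n\to\infty$ in the preliminary bound with $Y$ in equilibrium and tracking constants (the first summand $|x-x_0|^2$ in \eqref{eq:upsilon} is an admissible overestimate of $|x-x_0|$).

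For $\HDEUX$ and $\HTROIS$, the additive structure is decisive: driving $\bar{X}^\pas$ and $X$ by the same Brownian motion starting from $x_0$, the difference $E_t:=\bar{X}^\pas_t-X_t=\int_0^t\bigl(b(\bar{X}^\pas_{\un{s}_\pas})-b(X_s)\bigr)\,ds$ has no stochastic part. A path-wise computation of $\frac{d}{dt}|E_t|^2$, combined with the splitting $b(\bar{X}^\pas_{\un{s}_\pas})-b(X_s)=\bigl(b(\bar{X}^\pas_{\un{s}_\pas})-b(\bar{X}^\pas_s)\bigr)+\bigl(b(\bar{X}^\pas_s)-b(X_s)\bigr)$, and applying $\Cs$ to the second bracket and Young plus Lipschitz to the first, yields
\begin{equation*}
\frac{d}{dt}|E_t|^2\le -\alpha|E_t|^2+\frac{L^2}{\alpha}\,|\bar{X}^\pas_t-\bar{X}^\pas_{\un{t}_\pas}|^2.
\end{equation*}
The one-step oscillation is $O(\pas)$ in $L^2$ using the preliminary bound on $\|b(\bar{X}^\pas_{\un{t}_\pas})\|_2$, so Gronwall yields $\sup_t\|E_t\|_2\lesssim (L/\alpha)\sqrt{\pas}$. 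A triangle inequality between $\bar{X}^\pas$ and $\bar{X}^{\pas/2}$ (both compared to $X$) delivers $\HDEUX$ with $\bea=1$, and letting $t\to\infty$ while invoking the exponential ergodicity of both $X_t$ (from $\Cs$) and $\bar{X}^\pas_t$ (just proved) gives $\HTROIS$ with $\delta=1/2$.

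Each piece above is a short, classical computation in the strongly convex setting; the real work is entirely in the bookkeeping. I expect the main obstacle to be tracking the exact dependence of every constant on $L,\alpha,\sigma,d$ and $|b(x_0)|$ in order to reach precisely the scaling $\max(\alpha c_2^2/L^2,\alpha c_3^2/L^2,c_4^2)\lesssim_{uc}\Upsilon_1^2$ displayed in \eqref{eq:upsilon}, and in particular ensuring that the $L^2/\alpha$ factor produced by Young's inequality in $\HDEUX$ cancels cleanly against the $L^2$ at the denominator of the target inequality.
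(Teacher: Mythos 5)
Your proposal is correct and follows essentially the same route as the paper: the uniform $L^2$ Lyapunov bound and the synchronous-coupling contraction (Lemma \ref{lem:boundEuler}) give $\HQU$ and $\HUN$, a strong $L^2$ comparison of each Euler scheme with the diffusion driven by the same Brownian motion plus a triangle inequality gives $\HDEUX$ with $\bea=1$, and passing to the limit $t\to\infty$ using the ergodicity of both processes gives $\HTROIS$ with $\delta=1/2$, exactly as in Propositions \ref{prop:L2error} and \ref{prop:htrois}. The only difference is organizational and immaterial here: you run a single global Gronwall argument on the genuine-Euler/diffusion difference, whereas the paper proves a one-step estimate (Lemma \ref{prop:onestep}) and iterates it via the Markov property (a structure it reuses for the sharper $\bea=2$ bounds), and your constant tracking indeed lands on $\2^2\lesssim_{uc} L^2\alpha^{-3}|b(x_0)|^2+L^2\sigma^2\alpha^{-2}d$, which is what \eqref{eq:upsilon} requires.
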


\begin{rem} Note that when $b=-\nabla U$ and $x_0={\rm Argmin}_{x\in\ER^d}U$, the above bounds are simplified and have  a better dependence {on} $\alpha$. In particular, this clearly suggests  to start with this value of $x_0$. However, we kept the general bounds since $x_0$ is not always known in practice.
\end{rem}
As a corollary of this proposition and of Theorem \ref{maintheo}, the following theorem provides a first estimate of the cost of the multilevel procedure under $\Cs$:
\begin{theoreme} \label{maincoro1}
Let $f:\ER^d\rightarrow\ER$ be a Lipschitz continuous function and assume $\Cs$ with $\alpha\in(0,L\wedge 1]$. Let $x_0 \in \ER^d$ and suppose $\pas_0 \in (0,{\frac{\alpha}{2\bun^2}\wedge \frac{1}{2}}]$. Let ${\Upsilon_1}$ be defined by \eqref{eq:upsilon}. 
For $\varepsilon>0$, let  
$$r_0={{\Upsilon_1}},\quad R_\varepsilon= \lceil 2\log_2(r_0\varepsilon^{-1})\rceil,\quad T_r= \frac{\Upsilon_1^2\log(\pas_0^{-1})}{\alpha} \varepsilon^{-2} R_\varepsilon^2 2^{-r}, \quad r\in\{0,\ldots,R_\varepsilon\},$$
and $\tau=\tau_1|\log(\varepsilon)|+\tau_2$ with
$\tau_1=2\alpha^{-1}$ and $\tau_2={\alpha^{-1}\log{\Upsilon_1}}$. Set 
\begin{equation}\label{def:epsilonzero}
{\varepsilon_0:=\max\{\varepsilon\in(0,1], 2\log({\Upsilon_1}\varepsilon^{-2})\le \log(\pas_0^{-1}) R_\varepsilon^2\}.}
\end{equation}
 Then,  \eqref{eq:pluspetitqueepsilon}  holds true for any $\varepsilon\in(0,\varepsilon_0)$ with
$\mathfrak{C}_1\lesssim_{uc} 1$ and 
\begin{equation}\label{eq:complex2}
{{\cal C}({\cal Y})\le \frac{5\log(\pas_0^{-1})}{2\alpha\pas_0}\Upsilon_1^2 \varepsilon^{-2} \lceil \log^3_2(\Upsilon_1^2 \varepsilon^{-2})\rceil,}
\end{equation}
Furthermore, if $\alpha/L^2\le 1$, if $\sigma^2 \alpha^{-1} d\ge 1$  and if $|b(x_0)|^2\lesssim_{uc} \sigma^2 \alpha d$, then the result is true with $\tilde{\Upsilon}_1^2= \sigma^2 \alpha^{-1} d$ and $\gamma_0=\alpha/(2L^2)$ with a complexity satisfying
\begin{equation}\label{eq:complex22}
{{\cal C}({\cal Y})\le 5\log\l(\frac{2 L^2}{\alpha}\r)\frac{L^2}{2\alpha^3}\sigma^2 d\varepsilon^{-2}\lceil\log_2\l( \frac{\sigma^2d}{\alpha} \varepsilon^{-2}\r)\rceil^3.}
\end{equation}
%
%
%
%
%
\end{theoreme}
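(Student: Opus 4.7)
The approach is a direct application of Theorem \ref{maintheo}(ii) once Proposition \ref{prop:stronconvbea1} has been invoked to turn the strong contractivity assumption $\Cs$ into the four hypotheses $\HUN$, $\HDEUX$, $\HTROIS$ and $\HQU$ with $\bea=1$, $\delta=1/2$, and with constants $\mathfrak{c}_i$ controlled in terms of the quantity $\Upsilon_1$ defined in \eqref{eq:upsilon}. The plan is therefore to verify that the explicit parameter choices proposed in the statement (namely $r_0=\Upsilon_1$, $R_\varepsilon$, $T_r$, $\tau=\tauun|\log\varepsilon|+\taudeux$ with $\tauun=2/\alpha$ and $\taudeux=\alpha^{-1}\log\Upsilon_1$) fit inside the admissible ranges required in part (ii) of Theorem \ref{maintheo} with $\bea=1$, $\delta=1/2$, and then to read off the complexity from \eqref{relatedcosteps} after computing $\mathfrak{C}_2=\mathfrak{c}_\bea \gamma_0^{-1}\mathfrak{T}$ with $\mathfrak{c}_\bea=5/2$.

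For the parameter checks, specializing the formulas of Theorem \ref{maintheo}(ii) to $\bea=1$, $\delta=1/2$ gives $\tauun>1/\alpha$ and $\taudeux=0\vee\alpha^{-1}\log(r_0^{2}/\mathfrak{c}_4)$, both of which are dominated by the choices in the statement since Proposition \ref{prop:stronconvbea1} yields $\mathfrak{c}_4\lesssim_{uc}\Upsilon_1$. The condition $r_0\ge 1\vee(\mathfrak{c}_3\pas_0^{1/2})$ reduces, via the bound $\mathfrak{c}_3\lesssim_{uc} L\Upsilon_1/\sqrt{\alpha}$, to the requirement $\pas_0\lesssim_{uc}\alpha/L^2$, which is satisfied by the hypothesis $\pas_0\le\alpha/(2L^2)\wedge 1/2$. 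Finally, the condition $\mathfrak{T}\ge \alpha^{-1}\max(\mathfrak{c}_2^2\pas_0\log(\pas_0^{-1}),\mathfrak{c}_4^2)$, using $\mathfrak{c}_2^2\lesssim_{uc} L^2\Upsilon_1^2/\alpha$, reduces to $\mathfrak{T}\gtrsim_{uc}\Upsilon_1^2\log(\pas_0^{-1})/\alpha$, exactly matching the prefactor chosen in the definition of $T_r$. The $\varepsilon_0$ condition \eqref{def:epsilonzero} is precisely the translation of $\tau\le \tfrac{1}{2}T_{R_\varepsilon}$.

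Once these verifications are in, Theorem \ref{maintheo}(ii) delivers \eqref{eq:pluspetitqueepsilon} with $\mathfrak{C}_1\lesssim_{uc}1$, together with the complexity bound
\[
\mathcal{C}(\mathcal{Y})\le \frac{5}{2}\,\pas_0^{-1}\,\mathfrak{T}\,\varepsilon^{-2}R_\varepsilon^3 \;=\;\frac{5\log(\pas_0^{-1})}{2\alpha\pas_0}\Upsilon_1^2\varepsilon^{-2}R_\varepsilon^3,
\]
which is \eqref{eq:complex2} after noting that $R_\varepsilon=\lceil\log_2(\Upsilon_1^2\varepsilon^{-2})\rceil$. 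For the refined bound \eqref{eq:complex22} it remains to observe that under the extra assumptions $\alpha/L^2\le 1$, $\sigma^2 d/\alpha\ge 1$ and $|b(x_0)|^2\lesssim_{uc}\sigma^2\alpha d$, the expression \eqref{eq:upsilon} simplifies so that $\Upsilon_1^2$ can be replaced by $\tilde{\Upsilon}_1^2=\sigma^2 d/\alpha$; plugging in $\pas_0=\alpha/(2L^2)$ then yields the announced complexity.

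The main technical friction I expect is the bookkeeping of the universal constants coming from the $\lesssim_{uc}$ bounds in Proposition \ref{prop:stronconvbea1} and making sure they propagate into $\mathfrak{C}_1\lesssim_{uc}1$ rather than into the explicit prefactor $5/(2\alpha\pas_0)$. In particular, one has to be slightly careful that the admissible range for $\tauun$ in Theorem \ref{maintheo}(ii) is a strict inequality, so the choice $\tauun=2/\alpha$ leaves enough margin over $1/\alpha$ to absorb hidden constants, and that the conditions on $\mathfrak{T}$ hold without a spurious $\log(\pas_0^{-1})^{-1}$ factor (which is precisely why the prefactor in $T_r$ contains the $\log(\pas_0^{-1})$). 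Everything else is arithmetic substitution.
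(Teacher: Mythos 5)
Your proposal is correct and follows essentially the same route as the paper: invoke Proposition \ref{prop:stronconvbea1} (together with Remark \ref{rem:withnonexplicitconstants} to handle the constants known only up to universal factors), check that $r_0=\Upsilon_1$, $\mathfrak{T}=\alpha^{-1}\Upsilon_1^2\log(\pas_0^{-1})$, $\tau_1=2/\alpha$, $\tau_2=\alpha^{-1}\log\Upsilon_1$ satisfy the requirements of Theorem \ref{maintheo}$(ii)$ with $\bea=1$, $\delta=1/2$, read off $\mathfrak{C}_2=\frac{5}{2}\pas_0^{-1}\mathfrak{T}$, and replace $\Upsilon_1^2$ by $\sigma^2\alpha^{-1}d$ with $\pas_0=\alpha/(2L^2)$ for the refined bound. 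The only slight imprecision is presentational: in part $(ii)$ the value $\tau_1=2/\alpha$ is not merely "within margin" of a strict lower bound but is exactly the prescribed value $\tauun=\frac{1+\bea-2\delta}{\alpha\delta}$, which is how the paper uses it.
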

\begin{rem} 
$\rhd$ {Note that the choice of $\pas_0$ does not depend on $d$ (but only on $\alpha$ and $L$). This is due to the fact that, in this strongly convex setting, this is possible to control uniformly the ergodicity and the contraction properties of the dynamics of the Euler scheme as soon as $\gamma_0\le \alpha/(2L^2)$. It is not clear that such a  property remains true in the weakly convex setting where such controls are generally difficult to obtain especially in the discretized setting.}\\

$\rhd$ If we {skip} the dependence {on} $\alpha$ and $L$ and choose to only focus on the one {on} $d$ and $\varepsilon$\footnote{This is usual in the literature.}, one can remark that, {as soon as $|b(x_0)|\le C \sqrt{d}$}, the cost of the procedure is of order
$d\varepsilon^{-2}\left(\log^3 d+\log^3\varepsilon\right)$. We can thus say that we are at a ``logarithmic  distance'' of the ``optimal'' cost $d\varepsilon^{-2}$.\\
\end{rem}
\noindent \textbf{Gibbs distribution approximation I:} Let us apply the above result to the approximation of $\pi_U =Z_U^{-1} e^{-U}d\lambda_d$. Set
$$\bar{\lambda}_U=\sup_{x\in\ER^d}\bar{\lambda}_{D^2 U(x)}\quad \textnormal{and} \quad \underline{\lambda}_U=\inf_{x\in\ER^d}\underline{\lambda}_{D^2 U(x)}$$
where for a symmetric matrix $A$, $\bar{\lambda}_A$ and $\underline{\lambda}_A$ respectively denote the largest and  lowest eigenvalues of $A$. In the sequel, we assume that
\begin{equation}\label{hyp:valeurpropre}
0<\underline{\lambda}_U\le \bar{\lambda}_U<+\infty.
\end{equation}
In this case, $\Cs$ holds with $\alpha_U=\underline{\lambda}_U\wedge 1$ and $\nabla U$ is $L_U$-Lipschitz with $L_U=\bar{\lambda}_U$. 

\noindent Furthermore, for any $\sigma_0>0$, $\pi_U$ is the invariant distribution of the diffusion given with $b_{\sigma_0}=-\sigma_0^{2}\nabla U$ and diffusion coefficient ${\sigma}=\sqrt{2}\sigma_0$. Then, it is natural to ask about the choice of $\sigma_0$, especially in terms of $\alpha_U$ and ${L_U}$. We obtain the nice following result:
\begin{cor}\label{cor:gibbsI} Let $(X_t^{(\sigma_0)})_{t\ge0}$ denote the solution to $dX_t^{(\sigma_0)}=b_{\sigma_0}(X_t^{(\sigma_0)}) dt+\sqrt{2}{\sigma_0} dB_t$ with $b_{\sigma_0}=-\sigma_0^{2}\nabla U$. Assume that \eqref{hyp:valeurpropre} holds true. Then, for any $\sigma_0>0$, $(X_t^{(\sigma_0)})_{t\ge0}$ admits $\pi_U$ as an unique invariant distribution. Furthermore,   $\Cs$ holds with $\alpha_{\sigma_0}=\sigma_0^2{\alpha_U}$ and $b_{\sigma_0}$ is $L_{\sigma_0}$-Lipschitz with $L_{\sigma_0}=\sigma_0^2 {L_U}$.
Then, if 
$$\sigma_0^2=\frac{{\alpha_U}}{{L_U^2}},\quad \gamma_0=\frac{1}{2},\quad r_0={ \sqrt{\frac{d}{2\alpha_U}}},\quad  {|\nabla U(x_0)|\lesssim_{uc} \alpha_U^3 L_U^{-4} d}\quad \textnormal{and}\quad \Upsilon_1^2= d{\alpha_U^{-1}},
$$
 \eqref{eq:pluspetitqueepsilon}  holds true  for any $\varepsilon\in(0,\varepsilon_0)$ ($\varepsilon_0$ being defined by \eqref{def:epsilonzero})  with
$\mathfrak{C}_1\lesssim_{uc} 1$ and
\begin{equation*}
{{\cal C}({\cal Y})\le \frac{ 5 \log 2}{2}\frac{L_U^2}{\alpha_U^3} d  \varepsilon^{-2} \lceil\log_2\l(  {\alpha_U^{-1}} d\varepsilon^{-2}\r)\rceil^3.}
\end{equation*}
\end{cor}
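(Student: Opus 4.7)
The plan is to reduce the claim to Theorem \ref{maincoro1}(ii) by verifying its hypotheses in the Langevin setting and then translating the complexity bound \eqref{eq:complex22} into the parameters $(\alpha_U, L_U, d)$.

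First, I would address invariance and uniqueness: a direct computation on the adjoint of the infinitesimal generator $\sigma_0^2(-\nabla U\cdot\nabla + \Delta)$ of $X^{(\sigma_0)}$ shows that $e^{-U}$ solves the stationary Fokker--Planck equation for every $\sigma_0>0$, so $\pi_U$ is invariant; uniqueness follows from the non-degeneracy of $\sqrt{2}\sigma_0 I_d$ together with the Lyapunov bound derived from $\Cs$ as recalled below the statement of $\Cs$. Next, I would verify $\Cs$ and the Lipschitz constant for $b_{\sigma_0}$. The Hessian sandwich $\underline{\lambda}_U I_d \le D^2 U \le \bar{\lambda}_U I_d$, combined with the mean value inequality along the segment $[x,y]$, yields $\underline{\lambda}_U|y-x|^2 \le \langle\nabla U(y)-\nabla U(x),y-x\rangle$ and $|\nabla U(y)-\nabla U(x)|\le \bar{\lambda}_U|y-x|$, so $b_{\sigma_0}=-\sigma_0^2\nabla U$ satisfies $\Cs$ with $\alpha_{\sigma_0}=\sigma_0^2\alpha_U$ (recalling $\alpha_U=\underline{\lambda}_U\wedge 1\le\underline{\lambda}_U$) and is Lipschitz with $L_{\sigma_0}=\sigma_0^2 L_U$.

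With the specific choice $\sigma_0^2=\alpha_U/L_U^2$, one gets $\alpha_{\sigma_0}=\alpha_U^2/L_U^2$ and $L_{\sigma_0}=\alpha_U/L_U$, hence $\alpha_{\sigma_0}/L_{\sigma_0}^2=1$; this places us at the boundary of the condition $\alpha/L^2\le 1$ in Theorem \ref{maincoro1}(ii) and makes the admissible step bound $\alpha_{\sigma_0}/(2L_{\sigma_0}^2)=1/2$ compatible with $\pas_0=1/2$. The hypothesis $\sigma^2\alpha_{\sigma_0}^{-1}d\ge 1$ reduces to $2d/\alpha_U\ge 1$, which holds as $\alpha_U\le 1\le 2d$. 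The remaining hypothesis $|b_{\sigma_0}(x_0)|^2\lesssim_{uc}\sigma^2\alpha_{\sigma_0}d$ rewrites, after multiplying by $\sigma_0^4=\alpha_U^2/L_U^4$ and using $\sigma^2\alpha_{\sigma_0}d=2\alpha_U^3 L_U^{-4}d$, as the standing assumption $|\nabla U(x_0)|\lesssim_{uc}\alpha_U^3 L_U^{-4}d$ of the corollary. Consequently $\tilde\Upsilon_1^2=\sigma^2\alpha_{\sigma_0}^{-1}d=2d/\alpha_U$, matching $\Upsilon_1^2=d/\alpha_U$ up to a universal constant.

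Finally, substituting into \eqref{eq:complex22}: $\log(2L_{\sigma_0}^2/\alpha_{\sigma_0})=\log 2$, $\tfrac{L_{\sigma_0}^2}{2\alpha_{\sigma_0}^3}\sigma^2 d = \tfrac{L_U^2}{\alpha_U^3}d$, and $\sigma^2 d/\alpha_{\sigma_0}=2d/\alpha_U$. Absorbing the harmless factor $2$ inside the $\log_2$ into the universal constants then yields the announced cost bound. The $L^2$-accuracy with $\mathfrak{C}_1\lesssim_{uc}1$ is inherited directly from Theorem \ref{maincoro1}, and the validity range $\varepsilon\in(0,\varepsilon_0)$ (with $\varepsilon_0$ given by \eqref{def:epsilonzero}) is preserved because the parameters $\pas_0,\Upsilon_1,r_0$ feeding \eqref{def:epsilonzero} are precisely those prescribed in the corollary. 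There is no deep obstacle beyond careful bookkeeping; the main subtlety is to consistently keep track of the hidden universal constants when passing from $\tilde\Upsilon_1^2=2d/\alpha_U$ to $\Upsilon_1^2=d/\alpha_U$ throughout the bound.
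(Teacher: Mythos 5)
Your route is essentially the paper's: both arguments reduce the corollary to the second part of Theorem \ref{maincoro1} applied to the drift $b_{\sigma_0}=-\sigma_0^2\nabla U$ and diffusion coefficient $\sigma=\sqrt{2}\sigma_0$, note that $\alpha_{\sigma_0}/L_{\sigma_0}^2\le 1$ forces $\sigma_0^2\ge \alpha_U/L_U^2$ while $\alpha_{\sigma_0}\le 1$ requires $\sigma_0^2\le \alpha_U^{-1}$, take the minimal admissible value $\sigma_0^2=\alpha_U/L_U^2$ (so that $\alpha_{\sigma_0}/L_{\sigma_0}^2=1$ and $\gamma_0=1/2$), and substitute into \eqref{eq:complex22}. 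Your cost algebra is correct: $\log(2L_{\sigma_0}^2/\alpha_{\sigma_0})=\log 2$, $\frac{L_{\sigma_0}^2}{2\alpha_{\sigma_0}^3}\sigma^2 d=\frac{L_U^2}{\alpha_U^3}d$, $\sigma^2 d/\alpha_{\sigma_0}=2d/\alpha_U$, with the factor $2$ and the passage from $\tilde\Upsilon_1^2=2d/\alpha_U$ to $\Upsilon_1^2=d/\alpha_U$ absorbed into universal constants as allowed by Remark \ref{rem:withnonexplicitconstants}; your preliminary verifications of invariance, $\Cs$ and the Lipschitz constant are also fine (the paper takes these from the discussion preceding the corollary).

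There is, however, one step that is wrong as written: the hypothesis $|b_{\sigma_0}(x_0)|^2\lesssim_{uc}\sigma^2\alpha_{\sigma_0} d$ of Theorem \ref{maincoro1} does not ``rewrite as'' the displayed condition $|\nabla U(x_0)|\lesssim_{uc}\alpha_U^3 L_U^{-4} d$. Since $|b_{\sigma_0}(x_0)|^2=\sigma_0^4|\nabla U(x_0)|^2$ and $\sigma^2\alpha_{\sigma_0}d=2\sigma_0^4\alpha_U d$, the theorem's condition is equivalent to $|\nabla U(x_0)|^2\lesssim_{uc}\alpha_U d$, i.e. to $|b_{\sigma_0}(x_0)|^2\lesssim_{uc}\alpha_U^3 L_U^{-4}d$ — exactly the form of the analogous hypothesis in Corollary \ref{cor:gibbs2}. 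The condition as printed in Corollary \ref{cor:gibbsI} (no square on $|\nabla U(x_0)|$, right-hand side $\alpha_U^3L_U^{-4}d$) neither coincides with nor implies $|\nabla U(x_0)|^2\lesssim_{uc}\alpha_U d$ in general (for large $d$ it is much weaker), so your claimed equivalence papers over a genuine mismatch. To be fair, the mismatch originates in the statement itself, which reads like a misprint for $|b_{\sigma_0}(x_0)|^2\lesssim_{uc}\alpha_U^3L_U^{-4}d$; the paper's own proof silently assumes the hypotheses of Theorem \ref{maincoro1}. To make your argument airtight, replace the false ``rewrites as'' by the correct translation $|\nabla U(x_0)|^2\lesssim_{uc}\alpha_U d$ (and say explicitly that the corollary's condition is being read in that sense); as it stands, the verification of this last hypothesis of Theorem \ref{maincoro1} does not go through.
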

Keeping in mind that $\alpha_{\sigma_0}=\sigma_0^2 \alpha_{U}$, we remark that in the previous corollary,
$T_r= L_U^2\alpha_U^{-3}\log(2)\varepsilon^{-2} R_\varepsilon^2 2^{-r}$.
\begin{proof} 
Let $\sigma_0>0$. First remark, that since $\alpha_U\le L_U$, then $\alpha_{\sigma_0}\le L_{\sigma_0}$ and hence, $\alpha_{\sigma_0}\in(0,L_{\sigma_0}\wedge 1]$ if $\sigma_0^2\le \alpha_{U}^{-1}$. Then, by Theorem \ref{maincoro1}, for any $\sigma_0\in(0,\alpha_U^{-1}]$ such that $\alpha_{\sigma_0}/L_{\sigma_0}^2\le 1$,
\begin{equation}\label{eq:controleoptimall}
{{\cal C}({\cal Y})\le 5\log\l(\frac{2 \sigma_0^2 L_U^2}{\alpha_U}\r)\frac{L_U^2}{2\alpha_U^3} d\varepsilon^{-2}\lceil\log_2\l( \frac{d}{\alpha_U} \varepsilon^{-2}\r)\rceil^3.}
\end{equation}
But $\alpha_{\sigma_0}/L_{\sigma_0}^2\le 1$ if and only if $\sigma_0^2\ge {\alpha_U}/{L_U^2}$ so that one can set $\sigma_0^2= {\alpha_U}/{L_U^2}$ in the above inequality. In this case, one remarks that 
$$\frac{\alpha_{\sigma_0}}{L_{\sigma_0}^2}=\frac{ {\alpha_U}}{\sigma_0^2 {L_U^2}}=1$$
so that $\pas_0={1}/{2}$ and $\Upsilon_1^2=d{\alpha_U}^{-1}$. The result follows.
\end{proof}
\begin{rem} {In the above proof, we {choose the lowest} value of $\sigma_0$ {under which}  $\alpha_{\sigma_0}/L_{\sigma_0}^2\le 1$. The theoretical interest is to {remove} a logarithmic dependence {on} $L$ and $\alpha$. From a practical point of view, this normalization leads to a simplification of the parameters.}\\

\noindent {The simplest choice for $x_0$ is certainly $x_0={\rm Argmin}_{x\in\ER^d} U(x).$ In the case where $x_0$ is unknown, we suggest to introduce an optimization preprocess  in order to start the procedure with an initial point which is not so far from the minimizer of $U$ (or, more precisely, which sastisfies ${|\nabla U(x_0)|\le  \alpha_U^3 L_U^{-4} d}$).}

\end{rem}


%
%
%

\subsubsection{\texorpdfstring{$\bea=2$}{a=2} and \texorpdfstring{$\delta=1$}{delta=1}: optimal complexity with slightly more constraining assumptions}
Let us assume that $b$ is ${\cal C}^2$   and let us introduce the following notations: $\nabla b=[\partial_j b_i]_{1\le i\le j\le d}$, the Jacobian matrix of $b$ and $\Delta b=(\Delta b_i)_{i=1}^d$, the vector of Laplacians of $b_1,\ldots,b_d$ where we recall that  for a given function $\phi:\ER^d\rightarrow\ER$,
$$\Delta \phi=\sum_{i=1}^d \partial^2_{x_i^2} \phi.$$
If $b$ has bounded partial derivatives up to order $2$, we can define:
\begin{equation}\label{def:jaclap}
\|\nabla b\|_{2,\infty}^2=\sup_{x\in\ER^d} \|\nabla b(x)\|_F\quad\textnormal{and}\quad \|\Delta b\|_{2,\infty}
=\sup_{x\in\ER^d}|\Delta b(x)|^2=\sup_{x\in\ER^d}\sum_{i=1}^d |\Delta b_i(x)|^2,
\end{equation}
where $\|\,.\,\|_F$ stands for the Frobenius norm (see Section \ref{sec:notations} for a definition). We are now ready to provide some new bounds related to $\HDEUX$ and $\HTROIS$ when $\bea=2$ and $\delta=1$ (The results for $\HUN$ and $\HQU$ obtained in Proposition \ref{prop:stronconvbea2} still hold true).

\begin{prop}\label{prop:stronconvbea2} Assume that $\Cs$ holds true and that $b$ is $L$-Lipschitz and ${\cal C}^2$ with bounded partial derivatives. Let $x_0 \in \ER^d$ and suppose $\pas \in (0, {\frac{\alpha}{2\bun^2}\wedge \frac{1}{2}}]$.   Then, $\HDEUX$, $\HTROIS$ and $\HQU$ hold for $\bea=2$, $\delta=1$ (and for any Lipschitz continuous function $f:\ER^d\rightarrow\ER)$ with $\max\l(\frac{\alpha^2 \2^2}{L^4}, \frac{\alpha^2\3^2}{L^4},\4^2\r)\lesssim_{uc}\Upsilon_2^2$ where, 
\begin{equation}\label{eq:upsilon2}
{\Upsilon_2^2=\max\l(1,\frac{1}{\alpha^2} |b(x_0)|^2+ \frac{\sigma\sqrt{\alpha}}{L^3}\|\nabla b\|_{2,\infty}|b(x_0)|+\frac{\sigma^4}{L^4}\|\Delta b\|_{2,\infty}^2+ \frac{{\sigma^2\alpha d^{\frac{1}{2}}}}{L^3}\|\nabla b\|_{2,\infty}+\frac{\sigma^{2} d }{\alpha}\r) }.\end{equation} 
\end{prop}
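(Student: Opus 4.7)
Since the bounds on $\1$ and $\4$ in $\HUN$ and $\HQU$ established in Proposition \ref{prop:stronconvbea1} are compatible with the larger reference scale $\Upsilon_1^2\le \Upsilon_2^2$, the task reduces to upgrading $\HDEUX$ and $\HTROIS$ from the rates $(\bea,\delta)=(1,\tfrac12)$ to $(2,1)$. The key intermediate estimate is a uniform-in-time \emph{strong order $1$} bound for the Euler scheme,
\begin{equation*}
\sup_{t\ge 0}\|X_t^{x_0}-\bar{X}_t^{\pas,x_0}\|_2 \;\lesssim_{uc}\; L^2\alpha^{-1}\,\Upsilon_2\,\pas.
\end{equation*}
Given such a bound, $\HDEUX$ with $\bea=2$ follows from the triangle inequality $\|\bar{X}_t^\pas-\bar{X}_t^{\pas/2}\|_2\le \|X_t-\bar{X}_t^\pas\|_2+\|X_t-\bar{X}_t^{\pas/2}\|_2$, while $\HTROIS$ with $\delta=1$ follows by coupling $X$ and $\bar X^\pas$ synchronously with $X_0=\bar X_0^\pas$ drawn from $\pi$ and letting $t\to\infty$: the $W_2$-contraction for the Euler scheme induced by $\Cs$ ensures $\pi\bar P_t^\pas \to \pi^\pas$ in $W_2$, so $W_2(\pi,\pi^\pas)\le \limsup_{t\to\infty}\|X_t-\bar X_t^\pas\|_2 \lesssim \pas$, and hence $|\pi(f)-\pi^\pas(f)|\le[f]_1 W_2(\pi,\pi^\pas)\lesssim[f]_1\pas$.

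\textbf{Strong order $1$ via It\^o expansion.} Setting $e_t:=X_t^{x_0}-\bar{X}_t^{\pas,x_0}$, and using that the two processes are driven by the same Brownian motion with the same constant diffusion coefficient $\sigma$,
\begin{equation*}
de_t = \bigl[b(X_t)-b(\bar X_t^\pas)\bigr]dt + \bigl[b(\bar X_t^\pas)-b(\bar X_{\un{t}_\pas}^\pas)\bigr]dt,
\end{equation*}
and $\Cs$ yields $2\langle e_t,b(X_t)-b(\bar X_t^\pas)\rangle\le -2\alpha|e_t|^2$. Applying It\^o's formula to $b(\bar X_\cdot^\pas)$ on $[\un{t}_\pas,t]$—which is where the $\mathcal C^2$ regularity of $b$ is used—the discretization residual splits as
\begin{equation*}
b(\bar X_t^\pas)-b(\bar X_{\un{t}_\pas}^\pas) = \underbrace{\int_{\un{t}_\pas}^t\!\Bigl[\nabla b(\bar X_u^\pas)\,b(\bar X_{\un{u}_\pas}^\pas)+\tfrac{\sigma^2}{2}\Delta b(\bar X_u^\pas)\Bigr]du}_{=:D_t} \;+\; \underbrace{\sigma\!\int_{\un{t}_\pas}^t\!\nabla b(\bar X_u^\pas)\,dB_u}_{=:M_t}.
\end{equation*}
The drift part satisfies $\|D_t\|_2\lesssim \pas\bigl[\|\nabla b\|_{2,\infty}\sup_u\|b(\bar X_u^\pas)\|_2+\sigma^2\|\Delta b\|_{2,\infty}\bigr]$; combined with AM--GM this contributes at most $\alpha|e_t|^2/2+\alpha^{-1}|D_t|^2$ to the derivative of $|e_t|^2$, i.e.~an $O(\pas^2/\alpha)$ driving term after expectation.

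\textbf{Bootstrap for the martingale term.} The naive bound $\|M_t\|_2\lesssim\sigma\sqrt\pas\|\nabla b\|_{2,\infty}$ would only produce the $\bea=1$ rate. The gain rests on two observations. First, decomposing $e_t=e_{\un{t}_\pas}+\int_{\un{t}_\pas}^t(b(X_s)-b(\bar X_{\un s_\pas}^\pas))ds$, the $\mathcal F_{\un{t}_\pas}$-measurability of $e_{\un{t}_\pas}$ combined with the martingale property of $(M_u)_{u\ge\un{t}_\pas}$ yields $\ES\langle e_{\un{t}_\pas},M_t\rangle=0$. Second, by Cauchy--Schwarz and the already-known $\bea=1$ bound from Proposition \ref{prop:stronconvbea1},
\begin{equation*}
\bigl|\ES\langle e_t-e_{\un{t}_\pas},M_t\rangle\bigr|\;\le\; \pas\sup_s\|b(X_s)-b(\bar X_{\un s_\pas}^\pas)\|_2 \cdot \|M_t\|_2 \;\lesssim\; \pas\cdot L\sqrt\pas\cdot \sigma\sqrt\pas\|\nabla b\|_{2,\infty} = O(\pas^2).
\end{equation*}
Plugging both controls into the differential inequality yields $\frac{d}{dt}\ES|e_t|^2 \le -\alpha\ES|e_t|^2 + C\pas^2$, and Gronwall delivers the uniform strong order $1$ estimate. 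The constant $C$ is a combination of $\alpha,L,\sigma,|b(x_0)|,\|\nabla b\|_{2,\infty},\|\Delta b\|_{2,\infty}$ and $d$, which—together with the uniform moment bounds $\sup_t\|\bar X_t^\pas-x_0\|_2\lesssim \Upsilon_1$ inherited from Proposition \ref{prop:stronconvbea1}—repacks into $L^4\alpha^{-2}\Upsilon_2^2$ after expanding $\sup_u\|b(\bar X_u^\pas)\|_2\le |b(x_0)|+L\,\Upsilon_1$ and collecting like powers.

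\textbf{Main obstacle.} The real subtlety is extracting an $O(\pas^2)$ bound on $\ES\langle e_t,M_t\rangle$ while $\|M_t\|_2$ is intrinsically of order $\sqrt\pas$: without the martingale cancellation at time $\un{t}_\pas$ together with the bootstrap through the crude $\bea=1$ rate, the argument collapses back to $\bea=1$. The remainder is careful bookkeeping of moments, with the mixed term $\sigma\sqrt\alpha L^{-3}\|\nabla b\|_{2,\infty}|b(x_0)|$ in $\Upsilon_2^2$ coming from the cross product $\|\nabla b\|_{2,\infty}|b(x_0)|\cdot\Upsilon_1$, the Laplacian square $\sigma^4\|\Delta b\|_{2,\infty}^2/L^4$ from the pure $\Delta b$ contribution in $D_t$, and the dimension-dependent $\sigma^2d/\alpha$ from the ergodic moments of $\bar X^\pas$.
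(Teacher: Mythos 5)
Your strategy is essentially the paper's: the paper proves the same uniform-in-time strong order-one estimate through a one-step bound (Lemma \ref{prop:onestep}$(ii)$) iterated via the Markov property (Proposition \ref{prop:L2error}$(ii)$), and the two ingredients you isolate are exactly the paper's key steps — the It\^o expansion of $b$ along the Euler path (where ${\cal C}^2$ enters) and the martingale cancellation $\ES\langle e_{\un{t}_\pas},M_t\rangle=0$ followed by a Cauchy--Schwarz bound on the residual cross term, without which one falls back to order $1/2$. Then $\HDEUX$ with $\bea=2$ is obtained by the same triangle inequality through the diffusion (Proposition \ref{prop:L2error}$(iii)$), $\HTROIS$ with $\delta=1$ by the same pass-to-the-limit Wasserstein argument (Proposition \ref{prop:htrois}), and $\HQU$ from Lemma \ref{lem:boundEuler}$(iii)$. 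Your global Gronwall formulation versus the paper's per-step recursion, and your bootstrap through the $\bea=1$ bound versus the paper's direct control of the in-step increment $\|X_s-X_{\un{s}_\pas}\|_2$, are only cosmetic differences.

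One quantitative point needs fixing before the claimed constant $\Upsilon_2^2$ comes out, and the constant is the point of this proposition. In your bound $\|D_t\|_2\lesssim \pas\bigl[\|\nabla b\|_{2,\infty}\sup_u\|b(\bar X_u^\pas)\|_2+\sigma^2\|\Delta b\|_{2,\infty}\bigr]$ you measure the Jacobian in the Frobenius sup $\|\nabla b\|_{2,\infty}$, which can be as large as $L\sqrt d$. After AM--GM and Gronwall this term contributes $\alpha^{-2}\|\nabla b\|_{2,\infty}^2\sup_u\ES[|b(\bar X_u^\pas)|^2]\,\pas^2$ to $\sup_t\ES[|e_t|^2]$, which is not $\lesssim_{uc} L^4\alpha^{-2}\Upsilon_2^2\pas^2$ in general (it can exceed it by a factor of order $d$). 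You should instead bound $|\nabla b(\bar X_u^\pas)\,b(\bar X_{\un{u}_\pas}^\pas)|\le L\,|b(\bar X_{\un{u}_\pas}^\pas)|$ using the operator norm, which is at most $L$ by Lipschitzness (this is what the paper effectively does by bounding the analogous drift residual through plain Lipschitz continuity); the Frobenius norm should appear only through $\ES[|{\cal M}_s|^2]^{1/2}\le\|\nabla b\|_{2,\infty}\sqrt s$, where the extra $\sqrt d$ is tolerable precisely because of the martingale cancellation. Relatedly, your crude factor $\sup_s\|b(X_s)-b(\bar X_{\un{s}_\pas}^\pas)\|_2\lesssim L\sqrt\pas$ in the bootstrap omits the in-step diffusion increment of size $\sigma\sqrt{\pas d}$ and the constants hidden in the $\bea=1$ bound; carrying these along is what produces the $\sigma^2\alpha d^{1/2}L^{-3}\|\nabla b\|_{2,\infty}$ and mixed $|b(x_0)|$ terms of $\Upsilon_2^2$, so that part is bookkeeping rather than a gap, but it cannot be dropped from the displayed estimate.
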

\begin{rem} In the Ornstein-Uhlenbeck case ($b(x)=-x$ and $\sigma=\sqrt{2}$), which can be viewed as the simplest toy-model, we remark that if $x_0=0$, then $\Upsilon_2^2=\max(1,2\sigma^2d)$  since $|b(0)|=0$, $\alpha=L=1$, $\|\Delta b\|_{2,\infty}^2=0$ and $\|\nabla b\|_{2,\infty}=d^\frac{1}{2}$. In the general case,  these constants strongly depend on $b$  and on the behavior  of $\nabla b$ and $\Delta b$. However, there are model-specific and it is difficult to state a general result taking really into account this dependency. Nevertheless, in Theorem  \ref{maincoro2}, we will provide some {fairly explicit} conditions on $\|\nabla b\|_{2,\infty}$ and on $\|\Delta b\|_{2,\infty}^2$ {under which} these dependencies are controlled (Note that the operator $\Delta b$ also appears in Assumption $\mathbf{H3}$ of \cite{durmusmoulineshigh}).

\noindent Let us also remark that if $b$ has the following form:
\begin{equation}\label{eq:particularcasebi}
b_i(x)={\phi}_i(x_{j_1},\ldots,x_{j_m}), \quad 1\le m\le d,
\end{equation}
where $\phi_1$,\ldots, $\phi_d$ are ${\cal C}^2$-functions with  partial derivatives (up to order $2$) bounded by dimension-free constants, then $\|\nabla b\|_{2,\infty}d^{-\frac{1}{2}}+ \|\Delta b\|_{2,\infty}^2 d^{-1}\le C_m$ where $C_m$ does not depend on $d$. 
\end{rem}
\noindent As in the preceding part, we can now deduce a result as a corollary of this proposition and of Theorem \ref{maintheo}.
\begin{theoreme} \label{maincoro2}
Assume that $\Cs$ holds true with $\alpha\in(0,L\wedge 1]$ and that $b$ is $L$-Lipschitz and ${\cal C}^2$ with bounded partial derivatives. Let $f:\ER^d\rightarrow\ER$ be a Lipschitz continuous function. Let $x_0 \in \ER^d$ and suppose that $\pas_0 \in (0,{\frac{\alpha}{2\bun^2}\wedge \frac{1}{2}}]$. Let ${\Upsilon_2}$ be defined by \eqref{eq:upsilon2}. 
For $\varepsilon>0$, let  
$${r_0= {\Upsilon_2}},\quad, R_\varepsilon= \lceil \log_2(r_0\varepsilon^{-1})\rceil,\quad T_r= \frac{{\Upsilon_2^2}{\log(\pas_0^{-1})} }{\alpha} \varepsilon^{-2}  2^{-\frac{3}{2}r}, \quad r\in\{0,\ldots,R_\varepsilon\},$$
and $\tau=\tau_1|\log(\varepsilon)|+\tau_2$ with
$\tau_1=\alpha^{-1}$ and ${\tau_2=(2\alpha)^{-1}\log{\Upsilon_2}}$. Set 
$${\varepsilon_0:=\max\{\varepsilon\in(0,1], \sqrt{2}\log({\Upsilon_2}\varepsilon^{-2})\le \log(\pas_0^{-1}) (\Upsilon_2 \varepsilon^{-1})^{\frac{1}{2}}\}.}$$
 Then,  \eqref{eq:pluspetitqueepsilon}  holds true for any $\varepsilon\in(0,\varepsilon_0)$ with
$\mathfrak{C}_1\lesssim_{uc} 1$ and 
\begin{equation}\label{eq:complex3}
{{\cal C}({\cal Y})\le \frac{\frac{1}{2}+\sqrt{2}}{\sqrt{2}-1}\frac{{\Upsilon_2^2} \log(\pas_0^{-1})}{\gamma_0\alpha} \varepsilon^{-2}.}
\end{equation}
In particular, if  $\alpha/L^2\le 1$, $\sigma^2 \alpha^{-1} d\ge 1$, {$|b(x_0)|^2\lesssim_{uc} \sigma^2 \alpha d $},  $\sigma^2\|\Delta b\|_{2,\infty}^2 \lesssim_{uc} \alpha^{-1} L^4 d$ and 
$\|\nabla b\|_{2,\infty}\lesssim_{uc} \alpha^{-2} L^3  \sqrt{d}$, then for
 $\gamma_0=\alpha/(2L^2)$, { the conclusion is true with  $\tilde{\Upsilon}_2^2=$
  $ \sigma^2 \alpha^{-1} d$ }leading to the following complexity bound: 
\begin{equation}\label{eq:complex4}
{{\cal C}({\cal Y})\le \frac{2\sqrt{2}+1}{\sqrt{2}-1}  \frac{\sigma^2 L^2}{\alpha^3} { \log \left( \frac{2 L^2}{\alpha}\right)} d \varepsilon^{-2}.}
\end{equation}
%
%
%
%
%
\end{theoreme}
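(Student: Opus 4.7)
The plan is to apply Theorem \ref{maintheo}(ii) with $\bea=2$ and $\delta=1$, using Proposition \ref{prop:stronconvbea2} to certify that $\HUN$, $\HDEUX$, $\HTROIS$ and $\HQU$ hold with constants $c_1, c_2, c_3, c_4$ controlled by $\Upsilon_2$, and then to unpack the abstract constants $\mathfrak{C}_1, \mathfrak{C}_2$ into the explicit expressions claimed.

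First, I would verify that the proposed choices of $r_0$, $\mathfrak{T}$ and $\tau$ are admissible under Theorem \ref{maintheo}(ii). With $\bea=2$, $\dbea=(\bea-1)^{-2}=1$. The condition $r_0 \ge 1 \vee c_3 \gamma_0^\delta$ reduces via $c_3 \lesssim_{uc} L^2\Upsilon_2/\alpha$ and $\gamma_0 \le \alpha/(2L^2)$ to $c_3 \gamma_0 \lesssim_{uc} \Upsilon_2$, so $r_0 = \Upsilon_2$ works. Similarly, the lower bound $\mathfrak{T} \ge \alpha^{-1}\max(c_2^2 \gamma_0^{\bea}\log(\gamma_0^{-1}), c_4^2)$ is handled by combining $c_2 \lesssim_{uc} L^2\Upsilon_2/\alpha$ with $\gamma_0 \le \alpha/(2L^2)$ to get $c_2^2 \gamma_0^2 \lesssim_{uc} \Upsilon_2^2$, while $c_4^2 \lesssim_{uc} \Upsilon_2^2$ is immediate; hence $\mathfrak{T} = \Upsilon_2^2 \log(\gamma_0^{-1})/\alpha$ is admissible. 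For the warm-start, \eqref{tauuntaudeuxbis} with $\bea=2$, $\delta=1$ gives $\tauun=1/\alpha$ and $\taudeux=\alpha^{-1}\log(r_0^{3/2}/c_4) \lesssim_{uc} (2\alpha)^{-1}\log(\Upsilon_2)$, using $r_0=\Upsilon_2$ and $c_4 \lesssim_{uc} \Upsilon_2$. Theorem \ref{maintheo}(ii) then produces \eqref{eq:pluspetitqueepsilon} with $\mathfrak{C}_1 \lesssim_{uc} 1$ and $\mathcal{C}(\mathcal{Y}) \le \mathfrak{C}_2 \varepsilon^{-2}$ with $\mathfrak{C}_2 = \cbea \gamma_0^{-1}\mathfrak{T}$, and the identity $\cbea = 1+\frac{3}{2}(\sqrt{2}-1)^{-1} = (1/2+\sqrt{2})/(\sqrt{2}-1)$ yields \eqref{eq:complex3}.

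For the refined bound \eqref{eq:complex4}, my task is to show that under the specific hypotheses on $|b(x_0)|$, $\|\nabla b\|_{2,\infty}$ and $\|\Delta b\|_{2,\infty}$, each of the five summands in the definition \eqref{eq:upsilon2} of $\Upsilon_2^2$ is individually dominated by a universal constant times $\sigma^2 d/\alpha$. Direct substitution handles $\alpha^{-2}|b(x_0)|^2$, $\sigma^4 L^{-4}\|\Delta b\|_{2,\infty}^2$ and $\sigma^2 \alpha d^{1/2}L^{-3}\|\nabla b\|_{2,\infty}$. The only nontrivial term is the cross term $\sigma\sqrt{\alpha}L^{-3}\|\nabla b\|_{2,\infty}|b(x_0)|$, which must be controlled by inserting both hypotheses simultaneously: one obtains at most $\sigma\sqrt{\alpha}L^{-3}\cdot \alpha^{-2}L^3\sqrt{d}\cdot \sigma\sqrt{\alpha d} = \sigma^2 d/\alpha$. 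Since $\sigma^2 d/\alpha \ge 1$ by hypothesis, the maximum in \eqref{eq:upsilon2} gives $\Upsilon_2^2 \lesssim_{uc} \sigma^2 d/\alpha$. Substituting $\gamma_0 = \alpha/(2L^2)$ into $\mathfrak{C}_2 = \cbea \gamma_0^{-1}\Upsilon_2^2\log(\gamma_0^{-1})/\alpha$ and using $2\cbea = (2\sqrt{2}+1)/(\sqrt{2}-1)$ delivers \eqref{eq:complex4}.

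The principal obstacle is bookkeeping: several universal constants from Proposition \ref{prop:stronconvbea2} must align with the constraint $\gamma_0 \le \alpha/(2L^2)$ and with $\dbea=1$ so that the announced parameter choices absorb no hidden dimensional or spectral factors. The cross term in $\Upsilon_2^2$ is the other delicate point, since a naive AM-GM majorization could introduce spurious powers of $d$ or $\alpha$; the two hypotheses on $\|\nabla b\|_{2,\infty}$ and $|b(x_0)|$ must be combined directly rather than bounded separately. Once these two points are secured, the rest is a mechanical invocation of Theorem \ref{maintheo}(ii).
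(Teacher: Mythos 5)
Your proposal is correct and follows essentially the same route as the paper: apply Theorem \ref{maintheo}$(ii)$ with $\bea=2$, $\delta=1$, feeding in the constants from Proposition \ref{prop:stronconvbea2} (up to universal constants, which the paper formalizes via Remark \ref{rem:withnonexplicitconstants}), identify $\cbea=(\tfrac12+\sqrt2)/(\sqrt2-1)$ and $\mathfrak{C}_2=\cbea\gamma_0^{-1}\mathfrak{T}$, and then bound each term of $\Upsilon_2^2$ (including the cross term, handled exactly as in the paper) by $\sigma^2 d/\alpha$ to get \eqref{eq:complex4}. The only piece you leave implicit is the routine verification that the stated $\varepsilon_0$ matches the constraint $\tau\le \tfrac12 T_{R_\varepsilon}$ of Theorem \ref{maintheo}$(ii)$, which the paper carries out by computing $T_{R_\varepsilon}\ge (\Upsilon_2\varepsilon^{-1})^{1/2}\log(\pas_0^{-1})/(2\sqrt2\,\alpha)$ and $\tau=(2\alpha)^{-1}\log(\Upsilon_2\varepsilon^{-2})$.
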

The coefficient $\frac{\frac{1}{2}+\sqrt{2}}{\sqrt{2}-1}$ corresponds to {$\cbea$} in Theorem  \ref{maintheo}. Note that in  all the results, we give the explicit complexities since it may be convenient for practice. From a theoretical point of view, these explicit bounds do not give more information than some bounds up to universal constants since the $\varepsilon$-approximation is always obtained up to a universal constant $\mathfrak{C}_1$ which changes with the normalization of $\Upsilon_2^2$ (on this point, see Remark \ref{rem:withnonexplicitconstants}). 

\begin{rem}\label{rem:tehorr} {
\smallskip
It is worth noting that in this result, we attain a complexity proportional to  $d\varepsilon^{-2}$. As mentioned in the introduction, this means that if we forget for a moment, the intrinsic dependence {on} $L$ and $\alpha$, one attains a complexity which is of the same order as a Monte-Carlo method without bias.}



%


\end{rem}
\noindent \textbf{Gibbs distribution approximation II:} As in the previous section, we apply this theorem to the approximation of the Gibbs distribution (with the same notations) and obtain the following result. We use the same notations as in Corollary \ref{cor:gibbs2} introducing for a positive $\sigma_0$, the diffusion  $dX_t=b_{\sigma_0}(X_t)dt+\sqrt{2}\sigma_0 dB_t$ with $b_{\sigma_0}=-\sigma_0^{2}\nabla U$, which admits $\pi_U =Z_U^{-1} e^{-U}d\lambda_d$ as a unique invariant distribution (for any $\sigma_0>0$). We obtain the following result:

{ \begin{cor}\label{cor:gibbs2} Let the assumptions of Corollary \ref{cor:gibbsI}  be in force with $U:\ER^d\rightarrow\ER$ ${\cal C}^3$ with bounded partial derivatives. Set  $$\varepsilon_0:=\max\left\{\varepsilon\in(0,1], \sqrt{2}\log\left((d \alpha_U^{-1})^{\frac{1}{2}}\varepsilon^{-2}\right)\le \log(2)( d{\alpha_U^{-1}})^{\frac{1}{4}} \varepsilon^{-\frac{1}{2}}\right\}.$$ 
Then if 
$$\sigma_0^2=\frac{{\alpha_U}}{{L_U^2}},\quad \gamma_0=1/2,  \quad \Upsilon_2^2= d\alpha_U^{-1},\quad \|\Delta (\nabla U)\|_{2,\infty}^2 \lesssim_{uc} {\alpha_U^{-1}} L_U^4 d \quad \textnormal{and} \quad |\nabla U(x_0)|^2\lesssim_{uc} \alpha_U d,  $$
\eqref{eq:pluspetitqueepsilon}  holds true for any $\varepsilon\in(0,\varepsilon_0)$, with $\mathfrak{C}_1\lesssim_{uc} 1$  and,
\begin{equation}\label{eq:costgibbs2}
{\cal C}({\cal Y})\le \frac{\log 2(2\sqrt{2}+1)}{\sqrt{2}-1} \frac{ {L_U^2}}{{\alpha_U^3}} d  \varepsilon^{-2}.
\end{equation}
\end{cor}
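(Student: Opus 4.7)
The plan is to apply Theorem \ref{maincoro2} to the Langevin SDE $dX_t = b_{\sigma_0}(X_t)dt + \sqrt{2}\sigma_0 dB_t$ with drift $b_{\sigma_0} = -\sigma_0^2 \nabla U$ and diffusion coefficient $\sigma = \sqrt{2}\sigma_0$. Since $U$ is $\mathcal{C}^3$ with bounded partial derivatives and satisfies \eqref{hyp:valeurpropre}, $b_{\sigma_0}$ is $\mathcal{C}^2$ with bounded partial derivatives and satisfies $\Cs$ with contraction rate $\alpha_{\sigma_0} = \sigma_0^2 \alpha_U$ and Lipschitz constant $L_{\sigma_0} = \sigma_0^2 L_U$. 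As in the proof of Corollary \ref{cor:gibbsI}, the prerequisite $\alpha_{\sigma_0} \in (0, L_{\sigma_0} \wedge 1]$ of Theorem \ref{maincoro2} reduces to $\sigma_0^2 \le \alpha_U^{-1}$ since $\alpha_U \le L_U$.

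Next I would optimize the diffusion coefficient by imposing $\alpha_{\sigma_0}/L_{\sigma_0}^2 = 1$, which forces $\sigma_0^2 = \alpha_U/L_U^2$; this is compatible with $\sigma_0^2 \le \alpha_U^{-1}$ (again because $\alpha_U \le L_U$), yields $\gamma_0 = \alpha_{\sigma_0}/(2 L_{\sigma_0}^2) = 1/2$, and collapses the logarithmic factor $\log(2 L_{\sigma_0}^2/\alpha_{\sigma_0}) = \log(2 \sigma_0^2 L_U^2/\alpha_U) = \log 2$ into a universal constant. I would then verify the four technical conditions needed to invoke \eqref{eq:complex4} with $\tilde{\Upsilon}_2^2 = \sigma^2 \alpha_{\sigma_0}^{-1} d$: (i) $\sigma^2 \alpha_{\sigma_0}^{-1} d = 2d/\alpha_U \ge 1$ follows from $\alpha_U \le 1$; (ii) $|b_{\sigma_0}(x_0)|^2 = \sigma_0^4 |\nabla U(x_0)|^2 \lesssim_{uc} \sigma^2 \alpha_{\sigma_0} d$ reduces exactly to the assumption $|\nabla U(x_0)|^2 \lesssim_{uc} \alpha_U d$; (iii) $\sigma^2 \|\Delta b_{\sigma_0}\|_{2,\infty}^2 = 2\sigma_0^6 \|\Delta(\nabla U)\|_{2,\infty}^2 \lesssim_{uc} \alpha_{\sigma_0}^{-1} L_{\sigma_0}^4 d = \sigma_0^6 L_U^4 d/\alpha_U$ reduces to $\|\Delta(\nabla U)\|_{2,\infty}^2 \lesssim_{uc} \alpha_U^{-1} L_U^4 d$, which is assumed; and (iv) $\|\nabla b_{\sigma_0}\|_{2,\infty} = \sigma_0^2 \|D^2 U\|_F \le \sigma_0^2 \sqrt{d}\, L_U$ is dominated by the required $\alpha_{\sigma_0}^{-2} L_{\sigma_0}^3 \sqrt{d} = \sigma_0^2 L_U^3 \alpha_U^{-2}\sqrt{d}$ automatically, since $\alpha_U \le L_U$ gives $L_U \le L_U^3/\alpha_U^2$. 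Note that (iv) is therefore not listed as a hypothesis of the corollary.

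For the final complexity bound, the plan is simply to substitute $\sigma^2 L_{\sigma_0}^2/\alpha_{\sigma_0}^3 = 2 L_U^2/\alpha_U^3$ and $\log(2 L_{\sigma_0}^2/\alpha_{\sigma_0}) = \log 2$ into \eqref{eq:complex4}, which yields $\mathcal{C}(\mathcal{Y}) \lesssim_{uc} (L_U^2/\alpha_U^3)\, d\, \varepsilon^{-2}$ with the explicit prefactor of \eqref{eq:costgibbs2} (the spurious factor $2$ is absorbed either into $\mathfrak{C}_1 \lesssim_{uc} 1$ or, equivalently, by normalizing $\Upsilon_2^2 = d/\alpha_U$ rather than $2d/\alpha_U$). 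The statement of $\varepsilon_0$ is then obtained by specializing the definition of $\varepsilon_0$ in Theorem \ref{maincoro2} to $\Upsilon_2^2 = d/\alpha_U$ and $\gamma_0 = 1/2$. The only real obstacle is the careful bookkeeping in steps (iii)--(iv): one must verify that the scaling $\sigma_0^2 = \alpha_U/L_U^2$ preserves the right dimensional dependence, so that translating Theorem \ref{maincoro2} to the Gibbs setting leaves only the two explicit assumptions on $|\nabla U(x_0)|$ and $\|\Delta(\nabla U)\|_{2,\infty}$ stated in the corollary.
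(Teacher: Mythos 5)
Your proposal is correct and follows essentially the same route as the paper: apply the second part of Theorem \ref{maincoro2} to $b_{\sigma_0}=-\sigma_0^2\nabla U$ with $\sigma_0^2=\alpha_U/L_U^2$, translate the conditions on $|b_{\sigma_0}(x_0)|$, $\|\Delta b_{\sigma_0}\|_{2,\infty}$ and $\|\nabla b_{\sigma_0}\|_{2,\infty}$ (the last being automatic via $\|D^2U\|_F\le\sqrt{d}\,\bar\lambda_U=\sqrt{d}\,L_U$, exactly the paper's inequality \eqref{ineq:linearalgebra}), and substitute into \eqref{eq:complex4}. Your handling of the $\sqrt{2}\sigma_0$ versus $\sigma_0$ factor via the universal-constant normalization of $\Upsilon_2^2$ matches (indeed slightly sharpens) the paper's own bookkeeping, so there is no gap.
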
}

\begin{proof}
We apply the second part of Theorem \ref{maincoro2}  with $b_{\sigma_0}=-\sigma_0^2 \nabla U$, $L_{\sigma_0}=\sigma_0^2 L_U$ and $\alpha_{\sigma_0}=\sigma_0^2 \alpha_U$. The additional assumption on $\|\Delta (\nabla U)\|_{2,\infty}^2$ implies that $\sigma_0^2\|\Delta b_{\sigma_0}\|_{2,\infty}^2 \lesssim_{uc} \alpha_{\sigma_0}^{-1} L_{\sigma_0}^4 d$.  One also checks that the condition on $\|\nabla b_{\sigma_0}\|_{2,\infty}$ of Theorem \ref{maincoro2} holds true if $\|D^2U\|_{2,\infty}=\sup_{x\in\ER^d} \|D^2 U (x)\|_F\lesssim_{uc} {\alpha_U^{-3}} L_U^4 \sqrt{d}$ but this condition is always satisfied: actually, for a symmetric matrix $A$, 
$$\|A\|_F^2=\sum_{1\le i,j\le d} A_{i,j}^2={\rm Tr}(A^2)\le (\bar{\lambda}_A)^2 d$$
so that
\begin{equation}\label{ineq:linearalgebra}
\|D^2U\|_{2,\infty}\le \bar{\lambda}_U \sqrt{d}\le (\alpha_U^{-3}L_U^3) L_U \sqrt{d},
\end{equation}
since $\alpha_U^{-1} L_U\ge 1$.
{Finally the condition $|\nabla U(x_0)|^2\lesssim_{uc} \alpha_U d$ ensures that $|b_{\sigma_0}(x_0)|^2\lesssim_{uc} \sigma_0^2 \alpha_0 d. $
In this setting $\Upsilon_2^2=\sigma_0^2 \alpha_{\sigma_0}^{-1} d=\alpha_U^{-1} d$
, then for $\varepsilon\in(0,\varepsilon_0) $ we get the result since 
\begin{equation*}
\frac{2\sqrt{2}+1}{\sqrt{2}-1}  \frac{\sigma_0^2 L_{\sigma_0}^2}{\alpha_{\sigma_0}^3} \log \left( \frac{2 L_{\sigma_0}^2}{\alpha_{\sigma_0}}\right)d \varepsilon^{-2}=\frac{\log 2(2\sqrt{2}+1)}{\sqrt{2}-1} \frac{ {L_U^2}}{{\alpha_U^3}} d  \varepsilon^{-2}.
\end{equation*}
Note that the other parameters have the following form
$$ r_0= \sqrt{d{\alpha_U^{-1}}}, \quad R_\varepsilon= \lceil \log_2(r_0\varepsilon^{-1})\rceil,\quad T_r= \frac{d L_U^2 \log(2)}{\alpha_U^3} \varepsilon^{-2}  2^{-\frac{3}{2}r}, \quad r\in\{0,\ldots,R_\varepsilon\},$$
and $\tau=\tau_1|\log(\varepsilon)|+\tau_2$ with
$\tau_1=L_U^2 \alpha_U^{-2}$ and $\tau_2=\frac{1}{2}L_U^2 \alpha_U^{-2}\log\left(d{\alpha_U^{-1}}\right)$.}

%
\end{proof}
\begin{rem}\label{rem:bettercomplexf}{$\rhd$ Once again, the best choice for $x_0$ is $x_0=\textnormal{Argmin}_{x\in\ER^d} U(x)$. If this $x_0$ is not explicit, we use a classical optimization preprocess in order to start the procedure with an initial point which is not so far from the minimizer of $U$. }\\
$\rhd$ It is worth noting that ${\cal C}({\cal Y})$ has the same dependence {on} ${L_U}$ and ${\alpha_U}$ as in Corollary \ref{cor:gibbsI}.  This implies that, up to the additional condition on $\Delta (\nabla U)$, this result strictly improves   Corollary \ref{cor:gibbsI} since the logarithmic term disappeared. This additional condition is in fact very reasonable in practice. For instance, owing to \eqref{ineq:linearalgebra}, we remark that  it is satisfied if $\|\Delta (\nabla U)\|_{2,\infty}^2 \lesssim_{uc} {\alpha_U^{-1}} L_U^3 \|D^2 U\|_{2,\infty}^2$, $i.e.$ if
$$\sup_{(i,j,x)\in\{1,\ldots,d\}^2\times\ER^d} |\partial_{i,j,j}^3 U(x)|\lesssim_{uc} {\alpha_U^{-1}} L_U^3\sup_{(i,j,x)\in\{1,\ldots,d\}^2\times\ER^d} |\partial_{i,j}^2 U(x)|.$$

\end{rem}
\begin{rem}\label{rem:comparison} Let us end this section with some comments and some comparisons with the literature.
To the best of our knowledge, this paper is the first which provides an algorithm for the approximation of Gibbs distribution with an $\varepsilon$-complexity {of the order} $d\varepsilon^{-2}$. In the literature, it seems that the most comparable paper is   \cite{durmusmoulineshigh} where a part of the work is devoted to occupation measures of Euler schemes and where the authors obtain a complexity in ${\cal O}(d\varepsilon^{-4})$ (or ${\cal O}(d\varepsilon^{-3})$ if $f$ is bounded). The dependence {on} $\alpha$ and $L$ is also mentioned by the authors and a careful reading of their results {seems to lead to $\alpha_U^{-4} L_U^2$}. {This shows that, in this strongly convex setting, our multilevel procedure is able to improve the dependence in $\varepsilon$ without affecting the dependence in the dimension.}\\

\noindent In fact, in the literature, the study of the dependence {on} the dimension of Langevin methods is usually focused on the (Wasserstein/Total Variation) distance between the random variable produced by the algorithm and the Gibbs distribution. This is why the authors usually define the complexity as the number of iterations to sample one random variable whose distribution is at a distance lower than $\varepsilon$ from the target $\pi$. 
In Wasserstein distance, it seems that the best bounds for this number of iterations are {of the order} $d\varepsilon^{-2}$ ($d\varepsilon^{-1}$ for the total variation distance, see $e.g.$ \cite{dalalyan2020bounding} or \cite{durmusmoulineshigh}). Nevertheless, if one wants to deduce from these bounds a Monte-Carlo method which generates an approximation of $\int f(x)\pi(dx)$ with an MSE lower than $\varepsilon^2$, one needs to compute $N_\varepsilon\approx {\rm Var}_\pi(f) \varepsilon^{-2}$. As aforementioned, for a general $1$-Lipschitz function, ${\rm Var}_\pi(f)$ is ``of the order $d$'' so that the real complexity which would be deduced from these Wasserstein bounds is in fact ${\cal O}(d^2\varepsilon^{-3})$ (or ${\cal O}(d\varepsilon^{-3})$ in the particular case where $f$ is bounded since ${\rm Var}_\pi(f)$ is bounded in this case).\\

\noindent To conclude, let us remark that many papers now focus on the non strongly convex setting or at least try to develop a ``more robust strongly convex setting''.  Actually, in spite of the optimization of $\sigma_0$, our results show that, even in the strongly convex settings,  the complexities are very sensitive to the contraction and Lispchitz parameters so that in cases where $\alpha$ is very small or  $L$ {is} very large, the {computation cost} may explode. In this {case}, the complexity in ${\cal O}(d\varepsilon^{-2})$ becomes a little ``symbolic'' and some other ideas must be developed to manage this problem. On this topic, we propose an opening in Section \ref{subsection:opening} in a particular example where numerical computations (and heuristics)  show that an  increase of the value of $\pas_0$ leads to strongly better performances.
%

\noindent More generally, {extending} multilevel methods {to} such pathologic situations {seems to require to be able to preserve the} contraction property $\HDEUX$. Actually, $\HDEUX$ seems to be fundamental for the control of the variance of the correcting levels. Then, even if  in numerical  simulations, we remarked that the $L^2$-confluence of the Euler schemes seems to be still effective in some non strongly convex settings {(on this topic, see \cref{sub:non-convex-setting})}, the theoretical extension is a clearly difficult task.  {In the weakly convex setting, a first idea could be to adapt the penalized Langevin algorithm proposed in  \cite{karagulyan2020penalized}. In this paper, the authors  regularize a weakly convex potential by a uniformly strongly convex one and hence, approximate the regularized target. This approach would probably extend to our multilevel setting.
Still in the weakly convex setting, another viewpoint has been proposed in \cite{gadat2020cost} by considering convex potentials (of the \textit{Polyak-Lojasiewicz} type) with positive but vanishing at infinity Hessian matrix.  In this case, refined convex arguments seem to lead to some weak forms of the main assumptions $\HUN$, $\HDEUX$, $\HTROIS$ and $\HQU$. In particular, one may preserve the difficult $L^2$-confluence assumption $\HDEUX$ and  quantitative bounds may probably follow but with a worse dependence in the dimension. Finally, in the non-convex setting,  the recent paper \cite{Flammarion_Wainwright} proved quantitative bounds for the unadjusted Langevin algorithm with some arguments based on the comparison of the semi-groups of continuous-time and discretized dynamics and some log-Sobolev\footnote{Even if this assumption is usually difficult to check in practice without strong convexity, such an assumption opens the way to quantitative bounds in the non-convex setting.} contraction assumptions on the target probability.  Precisely, such  results could lead to assumptions  $\HTROIS$ and $\HUN$ respectively. Unfortunately, the $L^2$-confluence assumption $\HDEUX$ requires other type-arguments and makes this issue an open problem.}

%

\end{rem}

\subsection{Numerical illustrations}\label{sec:numillu}
This section is devoted to some numerical illustrations in some toy models. We only investigate the setting of Corollary \ref{cor:gibbs2} which produces a lower complexity (see Remark \ref{rem:bettercomplexf})  and focus on two examples. In the first classical Ornstein-Uhlenbeck, we  {detail} the choices of parameters and discuss the practical efficiency with respect to the theoretical one. In the second example, we focus on a model with a non-quadratic potential and where the constants $\alpha_U$ and $L_U$ are really different from one in order to emphasize the interest  the optimization of $\sigma_0$ proposed in  Corollary \ref{cor:gibbs2}.
\subsubsection{Ornstein-Uhlenbeck}\label{sub:ououou}
We propose to compute $\int_{\ER^d} f(x)\pi(dx)$ where $\pi={\cal N}(0,I_d)$ and $f(x)=|x|$ ($=(\sum_{i=1}^d x_i^2)^{\frac{1}{2}}$). When $d=2k$, $k\in\mathbb{N}^*$,
\begin{equation}\label{eq:IFpi}
I_f=\int_{\ER^d} f(x)\pi(dx)=\frac{(2k)!\sqrt{2\pi}}{2^{2k}k! (k-1)! }.
\end{equation}
The distribution $\pi$ being the invariant distribution of the Ornstein-Uhlenbeck process solution to $\textnormal{d}X_t=-X_t\textnormal{d}t +\sqrt{2}\textnormal{d}\B_t$,
the idea is certainly to apply the multilevel procedure to this process. Note that since $U(x)=|x|^2/2$, we have $\alpha_U=L_U=1$ so that the positive number  $\sigma_0$ of Corollary \ref{cor:gibbs2} is equal to $1$. Taking the parameters given in this corollary, we set:
$$\gamma_0=\frac{1}{2},\quad x_0=0, \quad \Upsilon_2^2=d,$$
so that for any $d\ge2$, for any $\varepsilon\in(0,1)$,
$$ R_\varepsilon=\lceil \log_2(\varepsilon^{-1}\sqrt{{d}} )\rceil,\quad T_r=d\varepsilon^{-2} 2^{-\frac{3}{2}r},\; r=0,\ldots,R,\quad \textnormal{and}\quad \tau=|\log \varepsilon|+\frac{1}{2} \log d.$$
\begin{rem} Note that {in all the} simulations, we choose, for the sake of simplicity to  set $T_r=L_U^2\alpha_U^{-3} d\varepsilon^{-2} 2^{-\frac{3}{2}r}$ instead of $T_r=\log(2) L_U^2\alpha_U^{-3}  d\varepsilon^{-2} 2^{-\frac{3}{2}r}$. By Theorem \ref{maintheo}$(ii)$ (where there is a lower-bound on $\Tfrak$), this does not change the conclusion {except} the cost which is divided by $\log 2$.
\end{rem}

With $d=10$ and $\varepsilon=0.1$, we first provide a simulation giving the contributions of each level. In this case, $I_f\approx 3.084$. The multilevel procedure applies with $R=5$.  In Table \ref{table1}, we give the number of iterations of the Euler scheme for each level and the evolution of the estimation after each level. 
\begin{table}[htbp]
\begin{center}
\begin{tabular}{| l | c | r | c| c|c|c|}
     \hline
     Level&0&1 & 2 & 3 &4&5\\ \hline
     Number of iterations &2000 & 2124 & 1497 &1059&747&531\\ \hline
      Estimation& 3.579 & 3.315 & 3.204 &3.149&3.118&\bf{3.105}\\
     \hline
     
   \end{tabular}
   \end{center}
   \caption{\label{table1}Evolution of the multilevel procedure with $r$. Theoretical value$\approx 3.084$. }
 
\end{table}
The total number of iterations of the Euler scheme (complexity) is equal to 7958 whereas the theoretical bound given in Corollary \ref{cor:gibbs2} is equal to $9243$ ( since $(2\sqrt{2}+1)(\sqrt{2}-1)^{-1}\approx9.423$). This difference comes from the fact that the bound on the complexity is obtained by a computation of the series 
$\sum_{r=1}^{+\infty} 2^{-\frac{r}{2}}$ whereas here it only involves $\sum_{r=1}^{5} 2^{-\frac{r}{2}}$. Note that the algorithm is compatible with parallelization procedures since the levels can be computed independently. {However, it is worth noting that the degree of parallelization of such a multilevel method is completely different from the traditional Multilevel-Monte-Carlo where the average is based on a massive number of Euler schemes (with a much shorter horizon) whose simulation can be completely parallelized.}\\

\noindent The table suggests that the procedure seems to be slightly ``oversized''  for the required precision and that the last levels bring corrections which are of order $10^{-2}$. This feeling is confirmed by a computation of  the empirical RMSE (Root Mean-Squared Error) with $N=50$ simulations of the multilevel procedure. We obtain:
$$ RMSE(I_f, d=10, \varepsilon=0.1)\approx0.026.$$
In other words, the method calibrated to obtain a precision $\varepsilon=0.1$ produces a precision $0.026$ in this particular example.
In Table \ref{table2}, we now provide several tests of the robustness of the algorithm by computing the empirical RMSE (with $N=50$ simulations) for different values of $d$ and $\varepsilon$ and two different starting points: $x_0=0$ (which is the theoretical best choice) and $x_0=(1,\ldots,1)$ satisfying $|\nabla U(x_0)|^2=|x_0|^2=d$ (so that the condition of Remark \ref{rem:bettercomplexf} is satisfied).
\begin{table}[htbp]
\begin{center}
\begin{tabular}{| c |c| c | r | c| c|}
     \hline
     d&$\varepsilon$&R &Complexity & $I_f$ & RMSE($x_0=0$/$x_0=(1,1,\ldots,1)$) \\ \hline
     10 &0.1 & 5 & $0.80*10^3$ & 3.084 &0.026/0.026\\ \hline
     10 &0.01 & 8 & $8.79*10^5$ & 3.084 &0.029/0.030\\ \hline
      100 & 0.1 & 7 & $8.60*10^4$ & 9.975&0.014/0.013\\ \hline
      100& 0.01&10&$9.02*10^6$&9.975&0.001/0.001\\ \hline
      1000&0.1&8& $8.79*10^5$ &31.615& 0.016/0.016  \\

           \hline
 \end{tabular}
 \end{center}
\caption{\label{table2} Computation of an estimation of the RMSE for different values of $d$ and $\varepsilon$. }
 \end{table}
Once again, we can remark that (at least on this example), the numerical results outperform the required precisions. Furthermore, the performances are very robust to $d$ and $\varepsilon$ (which is coherent with Corollary \ref{cor:gibbs2}). Furthemore, it is worth noting that the performances with $x_0=0_{\ER^d}$ or $x_0=(1,1,\ldots,1)$ are almost equal. 



\subsubsection{A logistic-type perturbation}\label{subsec:logistic}
In this second example, we consider the potential $U:\ER^d\mapsto\ER$ defined by
$$ U(\beta)=U_1(\beta)+ \frac{\lambda |\beta|^2}{2},\quad \beta\in\ER^d,\quad\textnormal{with}\quad  U_1(\beta)=\log(1+e^{\mathbf{x}^T\beta})$$
and $\mathbf{x}\in\ER^d$.
In this second model, we added to the quadratic function $\beta\mapsto\frac{\lambda |\beta|^2}{2}$ the function $U_1$ whose gradient is nothing but a logistic function. Such a potential is in the spirit of the ones which appear in the posterior distribution of Bayesian logistic regression ({see \cref{subsec:MCMCbayesian} below for details}) with Gaussian prior (in this perspective $\mathbf{x}$ may be viewed as a vector of covariates). We thus chose to keep the usual Bayesian notation $\beta$ for the variable but we will not investigate the real statistical model. \\

\noindent In view of our paper, this example is an interesting case since the theoretical results still apply but with some different $\alpha_U$ and $L_U$ (and a non quadratic potential). Let us compute $\nabla U_1$ and $D^2 U_1$:
$$\nabla U_1(\beta)= \mathbf{x} (1+e^{-\mathbf{x}^T\beta})^{-1}\quad \textnormal{and}\quad  D^2 U(\beta)= \frac{\mathbf{x} \mathbf{x}^T}{  (1+e^{-\mathbf{x}^T\beta}) (1+e^{\mathbf{x}^T\beta})}.$$
Then, for any $\beta\in\ER^d$,
$$\langle D^2 U_1(\beta)\beta,\beta\rangle= \frac{|\mathbf{x}^T\beta|^2}{  (1+e^{-\mathbf{x}^T\beta}) (1+e^{\mathbf{x}^T\beta})},$$
so that $U_1$ is a convex function but with $\inf_{\beta\in\ER^d}\underline{\lambda}_{U_1}(\beta)=0.$ On the other hand,  we deduce from the previous equality  and from Cauchy-Schwarz inequality that 
$$\forall \beta\in\ER^d,\quad \langle D^2 U_1(\beta)\beta,\beta\rangle\le \frac{|\mathbf{x}|^2}{  (1+e^{-\mathbf{x}^T\beta}) (1+e^{\mathbf{x}^T\beta})} |\beta|^2\le \frac{|\mathbf{x}|^2}{5} |\beta|^2,$$
where in the last inequality, we used that $(1+e^u)(1+e^{-u})=3+2{\rm cosh} u\ge 5.$ From what precedes, we deduce that we can set
$$\alpha_U=\lambda\wedge 1\quad \textnormal{and} \quad L_U=\lambda+\frac{|\mathbf{x}|^2}{5}.$$
\begin{rem} Note that even though $U_1$ is a convex function, we say that $U_1$ is a perturbation of the quadratic potential since it does not modify the contraction parameter but it increases the value of $L_U$.
\end{rem}
Finally, let us consider the last condition on $\Delta(\nabla U)$. We have
$$|\partial_{i,j,k}^3 U(\beta)|=|\mathbf{x}_i\mathbf{x}_j\mathbf{x}_k\frac{-2{\rm sinh}( \mathbf{x}^T\beta)}{(3+2 {\rm cosh} ( \mathbf{x}^T\beta))^2}|\le \frac{1}{25}|\mathbf{x}_i\mathbf{x}_j\mathbf{x}_k|$$
so that 
$$\sum_{1\le i,j\le d}|\partial_{i,j,j}^3 U(\beta)|^2\le \frac{1}{25} |x|^2(\sum_{j=1}^d x_j^4)\le \frac{1}{25} |x|^4\max\{x_j^2,j=1,\dots d\}\le \frac{ |x|^6}{25} \le 5 L_U^3\le 5\alpha_U^{-1} L_U^4 d.$$
The last bound being very rough, this means that the behavior of $D^3 U$ will have few consequences on the performances of the algorithm.
Now, we choose to throw randomly $\mathbf{x}$ and normalize it in order that $\frac{|\mathbf{x}|^2}{5}=a$ for a given $a$.
We set $\mathbf{x}=\frac{\sqrt{5a} Z}{\|Z\|}$ where $Z\sim {\cal N}(0,I_d).$
We apply Corollary \ref{cor:gibbs2} with 
$$\sigma_0^2=\frac{\lambda}{(\lambda+\frac{|\mathbf{x}|^2}{5})^2},$$
and an initial condition obtained after a standard gradient descent with constant step. At the end of the procedure, we check that  $|\nabla U(\beta_0)|^2\le \alpha_U  {d}$ in order to satisfy the assumptions of  Corollary \ref{cor:gibbs2} (note that the convergence of the gradient descent is very fast in this strongly convex setting).\\

\noindent  Here, we choose to consider the function $f={\rm Id}$ (``in the spirit'' of the \textit{posterior means} in Bayesian statistics).
We first compute a sharp estimation of the vector $(\int \beta_i \pi(d\theta))_{i=1}^d$ that we denote by $\bar{\beta}_{true}$\footnote{This estimation has been obtained with $\varepsilon=0.01$.} and then compute a Monte-Carlo approximation (with $N=20$ simulations) of the (normalized) expectation of the  $L^2$-distance between the estimation and the target :
$d^{-\frac{1}{2}}\|\bar{\beta}_{true}-\bar{\beta}_{estim}\|_{2}=d^{-\frac{1}{2}}\ES[ |\bar{\beta}_{true}-\bar{\beta}_{estim}|^2]^{\frac{1}{2}}$ where $\bar{\beta}_{estim}$ denotes the approximation of $\bar{\beta}_{true}$ produced by the multilevel procedure and where the reader has to keep in mind that $|\,.\,|$ denotes the Euclidean norm on $\ER^d$.  We propose a simulation with the parameters:
$$ d=10,100, \quad a=2,\quad \lambda=1/4\quad \alpha_U=1/4,\quad\textnormal{and}\quad L_U=9/4.$$
Even with these not so pathologic values, we can remark that this unfortunately strongly increases the cost of computation with respect to the Ornstein-Uhlenbeck case since it multiplies it by $L_U^2/\alpha_U^3=324$. Nevertheless, the procedure  still works since we obtain for $\varepsilon=0.1,$ 
$$d^{-\frac{1}{2}}\|\bar{\beta}_{true}-\bar{\beta}_{estim}\|_{2}\approx \begin{cases} 0.042&\textnormal{ with $d=10$ }\\
 0.023 & \textnormal{ with $d=100$}.
 \end{cases}$$
\subsubsection{Towards some strategies to reduce the impact of \texorpdfstring{$\alpha_U$}{alphaU} and \texorpdfstring{$L_U$}{LU}.}\label{subsection:opening} As aforementioned, bad values of $\alpha_U$ and $L_U$ may seriously affect the complexity of the procedure (being proportional to  $L_U^2/\alpha_U^3$). This problem is not specific to the multilevel approach but should be certainly tackled in order to produce less time-consuming algorithms in this case.\\

\noindent  In our setting, we remarked  in the numerical computations that oppositely to the nicely calibrated Ornstein-Uhlenbeck process, the contributions provided by the correcting levels are two small with respect to the required precision. For instance, in the above example, for $\varepsilon=0.1$, the correction related to the first level is already of order $10^{-2}$  (whereas in Table \ref{table1}, the first levels bring a correction of order $10^{-1}$). This suggests that the step is too small. More precisely, even though $\pas_0=1/2$ in Corollary \ref{cor:gibbs2}, the factor $\sigma_0^2=\alpha_U/(2L_U^2)$ induces a very small evolution of the dynamics (but is theoretically optimal in terms of $\alpha_U$ and $L_U$). \\

\noindent From a theoretical point of view, we are in fact limited by the constraint $\pas\le \alpha/(2L^2)$ which appears in Theorems \ref{maincoro1} and \ref{maincoro2}. Going back to the proofs, one can remark that this constraint is of first importance in several arguments and firstly in Lemma \ref{lem:boundEuler}$(i)$ for the $L^2$-stability of the Euler scheme. Actually, for a too large step, the Euler scheme explodes since the first order error produced by the discretization of the drift term becomes stronger than the contraction coming from Assumption $\Cs$. Note that this problem also appears in models where the potential is superquadratic\footnote{{A function $V:\ER^d\mapsto\ER$ is said to be superquadratic if $\lim_{|x|\rightarrow+\infty} \frac{V(x)}{|x|^2}=+\infty$.}}. Some solutions are proposed in the literature by introducing alternative schemes such as implicit discretizations in \cite{mattingly_stuart} or explicit Euler schemes with randomized (decreasing adaptive) step sequence as in \cite{Le07a}. Such alternative schemes (and other ones) should be probably investigated in view of improvements of the procedure in the general case. \\

\noindent In our specific case,  $U(x)=U_1(x)+U_2(x)$ where $U_1$ has a bounded gradient and $U_2(x)=\lambda |x|^2/2$ and it is in fact possible to alleviate the constraint on the step which guarantees a $L^2$-stability, without modifying the scheme.
Actually, since in this case,
$$|b(x)-b(y)|^2\le 2\lambda^2 |x-y|^2+ 8\|\nabla U_1\|_\infty^2,$$
a careful reading of the associated proof shows that the scheme is always long-time stable for any $\pas\le 1/(4\tcr{\lambda})$ with
\begin{equation*}
 \ES[|\Xge_{t}^{\pas,x}-x^\star|^2]\lesssim_{uc} |x-x^\star|^2 e^{-\frac{\alpha}{2}t}+\frac{|b(x^\star)|^2}{\laminf^2}+ \frac{\|U_1\|_\infty^2\pas+\sigma^2 d}{\laminf}.
 \end{equation*}
In the case $b(x^\star)=0$, this implies that the bound of  Lemma \ref{lem:boundEuler}$(i)$ remains of the same order as soon as   $\pas\le \min((4\alpha)^{-1}, \frac{\sigma^2 d}{\|U_1\|_\infty^2})$.
Note that such improvements of the domain of stability of the Euler scheme   may be possible (with other constraints) in the case where $U_2$ is $L_2$-Lispchitz with $L_2\ll L_1$ and $|\nabla U_1(x)|^2=o(U_2(x))$ when $x\rightarrow+\infty$ (case which usually appears in applications). 
However, Lemma \ref{lem:boundEuler}$(i)$ is not the only part of the proof where the constraint $\pas\le \alpha/L^2$ appears. In particular, it plays an important role for the control of the distance between the paths of the Euler scheme (which in turns leads to the control of the confluence properties which allow to control the variance). A a consequence, a potential improvement of Corollary \ref{cor:gibbs2} in this particular setting would require further investigations. \\

\noindent In order to give some little more substance to these perspectives, let us finish with a numerical computation. In the spirit of Corollary \ref{cor:gibbs2}, we keep $\sigma_0^2=\alpha_U/L_U^2$ but  replace $\pas_0=1/2$ by $\pas_0=\min((4\alpha_{\sigma_0})^{-1},\frac{\sigma_0^2 d}{\|U_1\|_\infty^2})$ in order to saturate the condition $\pas\le \min((4\alpha)^{-1}, \frac{\sigma^2 d}{\|U_1\|_\infty^2})$. With $d=100$, this leads in our example to  $\pas_0=(4\alpha_{\sigma_0})^{-1}=L_U^2/(4\alpha_U^2)$ and by \eqref{eq:complex4}, to a complexity of the order 
$\sigma_0^2 L_{\sigma_0}^4\pas_0^{-1}\alpha_{\sigma_0}^{-4} d\varepsilon^{-2}=\alpha_U^{-1}d\varepsilon^{-2}$ (instead of $L_U^2\alpha_U^{-3}d\varepsilon^{-2}$). With $d=100$ and with the notations and values of the previous section, this yields
$$d^{-\frac{1}{2}}\|\bar{\beta}_{true}-\bar{\beta}_{estim}\|_{2}\approx 0.017$$
for $\varepsilon=0.1$ (on $N=20$ simulations). Thus, it seems to preserve the efficiency of the theoretically checked method but with a number of iterations which has been divided by $L_U^2\alpha_U^{-2}$ ($=81$ in this particular case).
Going deeper in the numerical and theoretical perspectives on this topic is the purpose of a future paper.

\subsubsection{Comparison with some other MCMC methods for Bayesian learning}\label{subsec:MCMCbayesian}
{Let us continue this numerical section with some simulations that compare our proposed estimator to benchmark methods such as the Unadjusted Langevin Algorithm (ULA) and the Metropolis-Adjusted Langevin Algorithm (MALA). The aim is to compute a Bayesian estimator with the help of $n$ observations $Y=(y_1, \dots,y_n)\in \ER^n$ and many covariates $X=(x_1,\dots,x_n)\in \ER^{d\times n}$. The purpose of the Bayesian paradigm is to find a law modeling the parameter $\beta$. This research is based on the choice of a \textit{prior} law $\pi_0$ that characterizes what their value might be. By Bayes' rule, the \textit{posterior} density $\pi_n$ is given by:}
{
\begin{equation}\label{posteriorlaw}
\pi_n(\beta) \propto \pi_0(\beta) \exp \left( - \sum_{i=1}^n \ell_\beta(x_i,y_i) \right).
\end{equation}
}
{
where in logistic regression, the function $\ell_\beta$ is defined by:
\begin{equation*}
\ell_\beta(x,y)= y\log\left(\mathfrak{s}(\beta^T x)\right)+ (1- y)\log\left(1-\mathfrak{s}( \beta^T x )\right)
\end{equation*}
with $\mathfrak{s}$ denoting the sigmo\?id function defined by $\mathfrak{s}(t)=(1+e^{-t})^{-1}$, $t\in \ER$. 
We can consider a large variety of prior laws depending, for example, on our knowledge of the problem. Recently, great interest has been given to the so-called  Exponentially Weighted Aggregate (EWA) where $\pi_0(\beta)\propto e^{-\lambda \mathrm{Pen}(\beta)}$ (see \cite{art:dalalyan2018exponentially}),
with $\mathrm{Pen} : \ER^d \to \ER$ being a regularization function and $\lambda>0$. Here, we consider the Bayesian Ridge Logistic Regression by setting 
$\mathrm{Pen}(\beta)=|\beta|^2/2$ (we thus penalize by the square of the Euclidean norm) and aim to compute the posterior mean:
$$\hat{\beta}^{{\rm EWA}}=\int \beta \pi_n(\beta) d\beta=\int \beta \frac{e^{-U_n(\beta)}}{Z} \lambda_{d}(d\beta),$$
where $U(\beta):=\sum_{i=1}^n \ell_\beta(x_i,y_i) + \lambda|\beta|^2$ and $Z=\int e^{-U_n(\beta)}\lambda_{d}(d\beta)$.}
%
%
%

\noindent {We test our algorithm on a  heart disease public database\footnote{These data come from four different geographic placesCleveland, Hungary, Switzerland, and Long Beach V. \href{https://www.kaggle.com/datasets/johnsmith88/heart-disease-dataset}{https://www.kaggle.com/datasets/heart-disease-dataset}}, also used in \cite{durmusmoulineshigh} which contains $13$ covariates\footnote{Note that the ordinal covariates are replaced by dummy variables.} supposed to be correlated to heart diseases. Consequently, the target will be the presence or not of heart disease in the patient. To predict the target, more than $1000$ patients are observed.} \\

\noindent {As mentioned before, we compare the performances of our Multilevel-Langevin pathwise average (MLPA) with ULA (see \emph{e.g.} \cite{durmusmoulineshigh} and \cite{dalalyan}) and MALA (see \emph{e.g.} \cite{roberts_tweedie, bou2013nonasymptotic,art:WainwrightMetrop,chewi2021optimal,preart:DurmusMoulineMALA}). We recall that ULA is based on a classical Monte-Carlo average of Euler schemes whereas MALA is a Metropolis-Hasting-type algorithm  (see \cite{art:metropolis1953equation}) where proposals are based on the Euler scheme of a Langevin dynamics (see \cite{roberts_tweedie} for details). }
{We compute ULA and MALA with the following parameters:
\begin{equation}
    \Xge_0=0_{\ER^d}, \quad \pas=\varepsilon \sqrt{\frac{\alpha_U^{3}}{2 \blip_U^{4}d}}, \quad T=\frac{1}{2 \alpha_U}\log\left(\frac{d \varepsilon^{-2}}{2 \alpha_U}\right), \quad N=\frac{2d}{\varepsilon^{2}\alpha_U} \quad \textnormal{and} \quad \varepsilon=0.1,
\end{equation}
where $\pas$ denotes the discretization step, $T$ is the final time of each path, $N$ is the number of Monte Carlo sampling, and $\alpha_U$ and $L_U$ denote respectively the smallest and the greatest eigenvalue of the Hessian of U\footnote{They can be computed exactly as in Subsection \ref{subsec:logistic}.}  Relying on the results of \cite{durmusmoulineshigh},  these parameters are optimal choices for ULA to provide a RMSE of order $\varepsilon$. . Finally, we compare these estimations with MLPA applied with the following parameters (which lead to a RMSE of order $\varepsilon$ by \cref{cor:gibbs2}):}
{
\begin{equation*}
    \begin{split}
          &\gamma_0=\frac{1}{2}, \quad \lev_\varepsilon=\left\lceil \log_2 \left(\varepsilon^{-1}\sqrt{\frac{d}{\alpha_U}} \right)\right\rceil, \quad \ind=0,\ldots,\lev_\varepsilon, \quad \pas_\ind=\pas_0 2^{-\ind}
         \\& \Xge_0=0 ,\quad   T_\ind=d\varepsilon^{-2}\frac{\blip_U^2}{\alpha_U^3} 2^{-\frac{3}{2}\ind},\; \quad \textnormal{and}\quad \tau=\frac{1}{\alpha_U}\log( \varepsilon^{-1})+\frac{1}{2\alpha_U} \log \left(\sqrt{\frac{d}{8 \alpha_U}}\right).
    \end{split}
\end{equation*}
Our comparisons of ULA, MALA and MLPA are resumed in Table \ref{chap3table1} below where we provide the number of iterations of the Euler scheme and the empirical RMSE obtained after $50$ simulations of each method with $\varepsilon=0.1$.}
{\begin{table}[htbp]
\begin{center}
    \begin{tabular}{| l | c | c | c| }
        \hline
        Algorithm &MALA & ULA & MLPA \\ \hline
        Number of iterations & 320089 & 320089 & 62415 \\ \hline
        empirical RMSE & 0.0995 & 0.1458 & 0.0966\\
        \hline
    \end{tabular}
    \caption{\label{chap3table1} Comparison of ULA, MALA and MLPA, $\varepsilon=0.1$.}
    \end{center}
\end{table}}
{ Table \ref{chap3table1} illustrates the result shown in this paper. Indeed, we see that we achieve the same precision for the three methods, but the computational cost of the Multilevel method is very cheap compared to the two others.}

\subsubsection{Robustness in the non-convex setting}\label{sub:non-convex-setting}
{
In view of applications, a natural question occurs. Is such an algorithm able to remain efficient in a non-convex setting ? More precisely, does such a multilevel procedure have the ability of remaining more efficient than a standard one in the non-convex setting ? As explained in \cref{rem:comparison}, among the assumptions of \cref{maintheo}, the one which is the most difficult to check in a non-convex setting is the $L^2$-confluence hypothesis $\HDEUX$ ($L^2$-confluence)This is also the assumption which is the only one which is really specific to the multilevel procedure since its role is to control the variance of the correcting layers. The $L^2$-confluence is a very difficult problem and  it is clear that $\HDEUX$ is not true in general in the non-convex setting (see for instance the counter-example given in \cite[Proposition 3.1]{panloup_pages_lemaire}). Nevertheless, the example below shows that in some cases, the procedure may remain efficient:}\\

\noindent {Set $U(x)=\frac{1}{2}|x|^2-\log(1+|x|^2)$, $x\in\ER^d$. One easily checks that $\nabla U(x)= x\left(\frac{|x|^2-1}{|x|^2+1}\right)$ and that $U$ has a local maximum in $0$ and that each point $x$ satisfying $|x|=1$ is a local minima. Let us denote by $\nu\propto e^{-U}$ the related Gibbs distribution which is the invariant distribution of 
\begin{equation*}
\textnormal{d}X_t = -X_t \frac{|X_t|^2-1}{|X_t|^2+1} \textnormal{d}t + \sqrt{2}\textnormal{d}\B_t.
\end{equation*}
As in \cref{sub:ououou}, we choose  here to compute $\nu(f)$ with $f(x)=|x|$. In fact, $\nu(f)$ can be explicitly computed. We have
\begin{equation*}
\int_{\ER^d} f(x)\nu(dx)=\frac{d+2}{d+1} I_f,
\end{equation*}
where $I_f$ is given by \eqref{eq:IFpi}. Since in this setting, the contraction parameter $\alpha_U$ does not exist, we have to fix arbitrarily some parameters. We choose to do as if we had $\alpha_U=L_U=1$. This means that we fix the  parameters  as  in \cref{sub:ououou}. Table \ref{table4} below contains the empirical root-mean squared errors computed with $M=50$ computations related to a computation where $\varepsilon=0.1$. We thus remark that the method still works here (since the empirical error is lower than $\varepsilon$).
\begin{table}[htbp]
\begin{center}
\begin{tabular}{|c||c|c|}
\hline
&$d=100$ &  $d=1000$  \\
\hline
\hline
$\varepsilon=10^{-1}$ & 0.024   & 0.017\\
\hline
\end{tabular}
\end{center}
\caption{\label{table4} Empirical RMSE related to the computation of $\nu(f)$}
\end{table}
}

\section{A quantitative control of the error}\label{sec:quantitativecontrol}
{The proof of  \cref{maintheo}  is based on a classical bias-variance decomposition of the error with respect to the target. The originality of the proof lies in the sharp control of each term of the decomposition, according to the set of assumptions of \cref{maintheo}. Such controls are resumed in Proposition \ref{maintheointermed}} below where we provide, for some given step and time sequences,  an almost\footnote{By ``almost'', we mean that we omit the universal constants, $i.e.$ which do not depend on the parameters of the assumptions and of the diffusion. This will not perturb the sequel of the paper in which our main objective is to exhibit the dependency in the dimension.} quantitative control of the error in terms of the parameters involved in Assumptions {$\HUN$ to $\HQU$}. The proof of Theorem \ref{maintheo} will be then achieved in Section \ref{sec:maintheo}.


\begin{prop} \label{maintheointermed}
Let $f$ be a (non constant) Lipschitz function. Assume that {$\mathbf{(H_i)}$, $i=1,\ldots,4$,} hold for some given $\bea\in[1,2]$, $\eta_0\in[0,1/2]$ and $x_0\in\ER^d$. Assume that $\pas_0\in(0,\eta_0]$ with $\alpha\pas_0\le 1$, and that 
for every $\ind\in\{0,\ldots,\lev\}$, 
 $$\pas_\ind=\pas_0 2^{-\ind}\quad\textnormal{and}\quad  T_\ind =   T_0 2^{-\frac{1+\bea}{2}\ind},$$
 and that $\tau$ is a non-negative number satisfying $\tau\le T_\lev/2$. Then,

\noindent 
\begin{align*}
\frac{1}{[f]_1^2} \| \mathcal{Y}(\lev,\l(\pas_\ind\r)_\ind,\tau,\l(T_\ind\r)_\ind,f)- \pi(f) \|_2^2 & \le   \frac{\cuniv}{\alpha T_0}\left(\4^2+
\max(\2^2,\2\4)\pas_0^\bea\log\left(\pas_0^{-1}\right)
\left((\bea-1)^{-2}\wedge R^2 \right)\right)
\\ & + \l(  \3  \pas_\lev^\delta + \frac{\cuniv \1(x_0)}{\alpha T_0} e^{-\alpha\tau}2^{\frac{1+\bea}{2}\lev } \r)^2,
\end{align*}
where $\cuniv$ is a universal constant. The related complexity cost $\mathcal{C}(\mathcal{Y})$\footnote{By complexity cost, we recall that we mean the number of iterations of the Euler scheme which is needed to compute  $\mathcal{Y}(\lev,\l(\pas_\ind\r)_\ind,\tau,\l(T_\ind\r)_\ind,.)$.} satisfies:
\begin{equation}\label{eq:cout1}
\mathcal{C}(\mathcal{Y}) \le \begin{cases}  \frac{T_0}{\pas_0} \left((1+\frac{3}{2}(2^{\frac{\bea-1}{2}}-1)^{-1}\right)&\textnormal{if $\bea>1$}\\
\frac{T_0\left(1+\frac{3}{2}\lev\right)}{\pas_0}&\textnormal{if $\bea=1$}
\end{cases}
\end{equation} 
\end{prop}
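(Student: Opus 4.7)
\textbf{Proof proposal for Proposition \ref{maintheointermed}.}

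\textbf{Step 1 (bias-variance split).} Since the Brownian motions $B^{(0)},\ldots,B^{(R)}$ driving the different levels are independent, the levels $Y_0=\frac{1}{T_0-\tau}\int_\tau^{T_0} f(\bar{X}_{\un{s}_{\pas_0}}^{\pas_0})ds$ and $Y_r=\frac{1}{T_r-\tau}\int_\tau^{T_r}\bigl(f(\bar{X}_{\un{s}_{\pas_{r-1}}}^{\pas_r})-f(\bar{X}_{\un{s}_{\pas_{r-1}}}^{\pas_{r-1}})\bigr)ds$ ($r\ge1$) are independent random variables, so I would begin by writing
\[
\|\mathcal{Y}-\pi(f)\|_2^2=\mathrm{Var}(Y_0)+\sum_{r=1}^R\mathrm{Var}(Y_r)+\bigl(\ES[\mathcal{Y}]-\pi(f)\bigr)^2.
\]

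\textbf{Step 2 (bias).} After adding and subtracting $\pi^{\pas_r}(f)$ at each level, the ``stationary'' parts telescope to $\pi^{\pas_R}(f)$. The remainder at level $r$ equals $\frac{1}{T_r-\tau}\int_\tau^{T_r}(\ES_{x_0}[f(\bar{X}_{\un{s}_{\pas}}^{\pas})]-\pi^{\pas}(f))\,ds$ for $\pas\in\{\pas_r,\pas_{r-1}\}$, and is controlled by $\HUN$ and $\HQU$ as $\lesssim \1(x_0)[f]_1 e^{-\alpha\tau}/(\alpha(T_r-\tau))$. Since $\tau\le T_R/2\le T_r/2$, summing the geometrically increasing terms $T_r^{-1}\asymp T_0^{-1}2^{\frac{1+\bea}{2}r}$ and adding the final weak-error term from $\HTROIS$ yields
\[
|\ES[\mathcal{Y}]-\pi(f)|\le \3 [f]_1\pas_R^\delta+\frac{\cuniv\,\1(x_0)[f]_1\,e^{-\alpha\tau}}{\alpha T_0}\,2^{\frac{1+\bea}{2}R}.
\]

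\textbf{Step 3 (variance of the rough level).} I would use Fubini and the identity $\mathrm{Var}(Y_0)=\frac{2}{(T_0-\tau)^2}\int\int_{\tau\le u\le s\le T_0}\mathrm{Cov}(f(\bar X_{\un s}^{\pas_0}),f(\bar X_{\un u}^{\pas_0}))\,du\,ds$. Conditioning on $\mathcal{F}_u$ and applying the Markov property together with $\HUN$ and $\HQU$ gives $|\mathrm{Cov}|\lesssim[f]_1^2\4^2 e^{-\alpha(s-u)}$; integrating produces $\mathrm{Var}(Y_0)\le \cuniv[f]_1^2 \4^2/(\alpha T_0)$.

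\textbf{Step 4 (variance of the correcting levels).} This is the main technical step. Writing $\Delta(s)=f(\bar X_{\un{s}_{\pas_{r-1}}}^{\pas_r})-f(\bar X_{\un{s}_{\pas_{r-1}}}^{\pas_{r-1}})$, I have two competing bounds on $|\mathrm{Cov}(\Delta(s),\Delta(u))|$ for $u\le s$:
\begin{itemize}
\item \emph{Cauchy--Schwarz}: $|\mathrm{Cov}|\le \|\Delta(s)\|_2\|\Delta(u)\|_2\le [f]_1^2\2^2\pas_{r-1}^{\bea}$ by $\HDEUX$.
\item \emph{Mixing}: conditioning on $\mathcal{F}_u$ and applying $\HUN$ to each of the two semigroups $P^{\pas_r}$ and $P^{\pas_{r-1}}$, the constant $\pi^{\pas_r}(f)-\pi^{\pas_{r-1}}(f)$ cancels with the corresponding shift in $\ES[\Delta(s)]$, leaving $\|\ES[\Delta(s)|\mathcal{F}_u]-\ES[\Delta(s)]\|_2\lesssim[f]_1\4 e^{-\alpha(s-u)}$; pairing with $\|\Delta(u)\|_2\le[f]_1\2\pas_{r-1}^{\bea/2}$ via Cauchy--Schwarz gives $|\mathrm{Cov}|\lesssim[f]_1^2\2\4\pas_{r-1}^{\bea/2}e^{-\alpha(s-u)}$.
\end{itemize}
Taking the minimum and splitting the integration over the time lag at the crossover point $\tau^\star\asymp\alpha^{-1}\log(\4/(\2\pas_{r-1}^{\bea/2}))\lesssim\alpha^{-1}\log(\pas_{r-1}^{-1})$ yields
\[
\int_0^\infty\!\!\min\bigl(\2^2\pas_{r-1}^{\bea},\;\2\4\pas_{r-1}^{\bea/2}e^{-\alpha v}\bigr)dv\;\lesssim\;\frac{\max(\2^2,\2\4)\,\pas_{r-1}^{\bea}\log(\pas_{r-1}^{-1})}{\alpha},
\]
and consequently $\mathrm{Var}(Y_r)\le\cuniv\,[f]_1^2\,\max(\2^2,\2\4)\,\pas_{r-1}^\bea\log(\pas_{r-1}^{-1})\big/(\alpha(T_r-\tau))$.

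\textbf{Step 5 (summation and complexity).} Summing Step 4 with $T_r-\tau\ge T_r/2$ gives the geometric series
\[
\sum_{r=1}^R\frac{\pas_{r-1}^\bea\log(\pas_{r-1}^{-1})}{T_r}\;\lesssim\;\frac{\pas_0^\bea\log(\pas_0^{-1})}{T_0}\sum_{r=1}^R (1+r)\,2^{r(1-\bea)/2},
\]
and the last sum is $\lesssim(\bea-1)^{-2}$ when $\bea>1$ (using $\sum r x^r=x/(1-x)^2$) and $\lesssim R^2$ when $\bea=1$, yielding the announced $((\bea-1)^{-2}\wedge R^2)$ factor. Combining with Steps 2--3 gives the stated $L^2$ bound. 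The complexity \eqref{eq:cout1} is immediate: $\mathcal{C}(\mathcal{Y})\le T_0/\pas_0+\sum_{r=1}^R T_r(\pas_r^{-1}+\pas_{r-1}^{-1})=\tfrac{T_0}{\pas_0}\bigl(1+\tfrac{3}{2}\sum_{r=1}^R 2^{r(1-\bea)/2}\bigr)$, bounded by the geometric or arithmetic sum depending on whether $\bea>1$ or $\bea=1$.

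\textbf{Main obstacle.} The delicate point is Step 4: producing the right synergy between the pointwise confluence bound from $\HDEUX$ (which gives a factor $\pas_{r-1}^{\bea/2}$, not $\pas_{r-1}^{\bea}$) and the Markov mixing from $\HUN$, so that (i) the stationarity shift $\pi^{\pas_r}(f)-\pi^{\pas_{r-1}}(f)$ is correctly removed by the centring (otherwise an extra $\pas_{r-1}^{2\delta}$ residue enters), and (ii) the crossover between the Cauchy--Schwarz regime and the mixing regime produces a \emph{single} logarithmic factor that survives the geometric summation over levels without spoiling the $(\bea-1)^{-2}\wedge R^2$ dependence.
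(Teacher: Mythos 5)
Your proposal is correct and follows essentially the same route as the paper: the same bias/variance split using level independence, the same telescoping-plus-$\HUN$ control of the long-time bias and $\HTROIS$ for the final weak-error term, the same two-regime covariance bound for the correcting levels (Cauchy--Schwarz via $\HDEUX$ versus Markov mixing via $\HUN$/$\HQU$ with the cancellation of $\pi^{\pas_r}(f)-\pi^{\pas_{r-1}}(f)$), and the same geometric summation and iteration count. The only difference is the exact choice of crossover time in the covariance integral, which is immaterial.
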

The proof of the above result is the objective of the sequel of this section. By the bias/variance decomposition,
\begin{equation*}
\| \mathcal{Y}(\lev,\l(\pas_\ind\r)_\ind,\tau,\l(T_\ind\r)_\ind,f)- \pi(f) \|_2^2 = \mathbb{E}[\mathcal{Y}(\lev,\l(\pas_\ind\r)_\ind,\tau,\l(T_\ind\r)_\ind,f)-\pi(f)]^2+\mathrm{Var}(\mathcal{Y}(\lev,\l(\pas_\ind\r)_\ind,\tau,\l(T_\ind\r)_\ind,f)).
\end{equation*}
The sequel of the section is then divided into two parts successively studying   the bias and variance terms. The main respective results are Propositions \ref{biascontr} and \ref{varcontr}. Then, Proposition \ref{maintheointermed}  follows from a combination of these two propositions and from the following remark about the complexity cost: for some given parameters $R$ and $(T_r)_{r=0}^R$ and $(\pas_r)_{r=0}^R$, the complexity cost related to $\mathcal{Y}(\lev,\l(\pas_\ind\r)_\ind,\tau,\l(T_\ind\r)_\ind,.)$ satisfies:
$${\cal C}(\mathcal{Y})\le \frac{T_0}{\pas_0}+\sum_{\ind=1}^\lev \left(\frac{T_\ind}{\pas_\ind}+\frac{T_{\ind}}{\pas_{\ind-1}}\right)
= \frac{T_0}{\pas_0}\left(1+\frac{3}{2} \sum_{\ind=1}^\lev 2^{\frac{1-\bea}{2}\ind}\right),$$
This easily leads to \eqref{eq:cout1}.
%




\subsection{Step 1: Bias of the procedure}
{In the sequel, $\mathcal{Y}(\lev,\l(\pas_\ind\r)_\ind,\tau,\l(T_\ind\r)_\ind,f)$ is usually written $\mathcal{Y}$ for the sake of simplicity.}  We start with a telescopic-type decomposition:
\begin{equation}\label{telescop}
\begin{split}
\mathcal{Y}(f)-\pi(f) &= \frac{1}{T_0-\tau}\int_\tau^{T_0} f(\bar{X}_{\un{s}_{\gamma_0}}^{\gamma_0,x_0}) - \pi^{\pas_0} (f) ds
\\ & + \sum_{\ind=1}^\lev \l(\frac{1}{T_r-\tau}\int_\tau^{T_r}f(\bar{X}_{\un{s}_{\gamma_{r-1}}}^{\gamma_r,x_0})- \pi^{\pas_\ind} (f) ds-\frac{1}{T_r-\tau}\int_\tau^{T_r}f(\bar{X}_{\un{s}_{\gamma_{r-1}}}^{\gamma_{r-1},x_0})-\pi^{\pas_{\ind-1}} (f)  ds \r)
\\ & + \pi^{\pas_R}(f)-\pi(f). 
\end{split}
\end{equation}
\begin{rem}\label{rem:multilevelstand}
In a standard Multilevel Monte-Carlo procedure (in finite horizon), the expectation of the above sum would be equal to the last term only, $i.e.$ the bias would  be exactly $\pi^{\pas_R}(f)-\pi(f)$. In this long-time setting, the bias also contains long-time components which correspond to the (expectation of the) first and the second right-hand members of the above equality. Nevertheless, these long-time error terms will be negligible under the exponential contraction assumption $\HUN$. 
\end{rem}
\noindent Let us now study the bias generated by the first and second terms of the right-hand side of \eqref{telescop}.

\begin{lem}\label{couchcontr} {Assume $\HUN$. Let $\gamma\in(0,\eta_0]$. Let $\eta=\gamma$ or $\eta=2\gamma$. Let  $\tau$ and $T$ be positive numbers, such that $\tau\le \frac{T}{2}$.  Then, for all $x\in\ER^d$,} 
\begin{equation*}
\l| \frac{1}{T-\tau}\int_\tau^{T} \mathbb{E}_{x}[f(\bar{X}_{\un{s}_{\eta}}^{\gamma,x_0})] - \pi^{\pas} (f) ds
\r| \le \frac{2 e^{\alpha\eta} \1(x)[f]_1e^{-{\alpha\tau}}}{\alpha T}   ,
\end{equation*}
where $x\mapsto \1(x)$  is given by $\HUN$.
\end{lem}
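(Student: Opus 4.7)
The plan is to pull the absolute value inside the time integral by the triangle inequality, then apply $\HUN$ pointwise to the integrand, and finally perform the explicit exponential integration.

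First I would observe that whether $\eta=\gamma$ or $\eta=2\gamma$, the quantity $\un{s}_\eta$ is a multiple of $\gamma$, hence a genuine discretization time of the Euler scheme of step $\gamma$. In particular $\un{(\un{s}_\eta)}_\gamma=\un{s}_\eta$, so applying $\HUN$ (with the initial condition $x$ in place of the starting point of the scheme) yields
\begin{equation*}
\bigl|\mathbb{E}_{x}\bigl[f(\bar{X}_{\un{s}_{\eta}}^{\gamma,x})\bigr] - \pi^{\gamma} (f)\bigr| \le \1(x)[f]_1 e^{-\alpha \un{s}_\eta}.
\end{equation*}

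Next, by the triangle inequality applied to the time integral,
\begin{equation*}
\Bigl|\frac{1}{T-\tau}\int_\tau^{T}\bigl(\mathbb{E}_{x}[f(\bar{X}_{\un{s}_{\eta}}^{\gamma,x})]-\pi^\gamma(f)\bigr)\,ds\Bigr| \le \frac{\1(x)[f]_1}{T-\tau}\int_\tau^{T} e^{-\alpha \un{s}_\eta}\,ds.
\end{equation*}
Since $\un{s}_\eta \ge s-\eta$ by definition of the floor, I would use $e^{-\alpha \un{s}_\eta}\le e^{\alpha\eta}e^{-\alpha s}$, then evaluate the resulting exponential integral as
\begin{equation*}
\int_\tau^T e^{-\alpha s}\,ds = \frac{e^{-\alpha\tau}-e^{-\alpha T}}{\alpha}\le \frac{e^{-\alpha\tau}}{\alpha}.
\end{equation*}

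Finally, the hypothesis $\tau\le T/2$ gives $T-\tau\ge T/2$, so $(T-\tau)^{-1}\le 2/T$, and combining the previous bounds produces
\begin{equation*}
\Bigl|\frac{1}{T-\tau}\int_\tau^{T} \mathbb{E}_{x}[f(\bar{X}_{\un{s}_{\eta}}^{\gamma,x})]-\pi^\gamma(f)\,ds\Bigr|\le \frac{2 e^{\alpha\eta}\1(x)[f]_1 e^{-\alpha\tau}}{\alpha T},
\end{equation*}
which is the announced inequality. There is no real obstacle here: the entire argument is an application of $\HUN$ inside the integral combined with the elementary bound $\un{s}_\eta\ge s-\eta$; the only mild point is the identification of $\un{s}_\eta$ as a discretization time of the $\gamma$-step scheme in the case $\eta=2\gamma$, which is why the conclusion uses the invariant distribution $\pi^\gamma$ of the fine scheme (written $\pi^\pas$ in the statement).
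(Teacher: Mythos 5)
Your proof is correct and follows essentially the same route as the paper: apply $\HUN$ pointwise inside the integral (noting that $\un{s}_\eta$ is a discretization time of the step-$\gamma$ scheme), use $\un{s}_\eta\ge s-\eta$, and conclude with $\tau\le T/2$. The only cosmetic difference is that you bound $e^{-\alpha \un{s}_\eta}\le e^{\alpha\eta}e^{-\alpha s}$ and integrate directly, whereas the paper majorizes the integral by a geometric sum before summing it; both yield the same $e^{\alpha\eta}e^{-\alpha\tau}/\alpha$ factor.
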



\begin{proof}
Let $\eta=\gamma$ or $\eta=2\gamma$. We have
$$
\l| \frac{1}{T-\tau}\int_\tau^{T} \mathbb{E}_{x}[f(\bar{X}_{\un{s}_{\eta}}^{\gamma,x_0})] - \pi^{\pas} (f) ds
\r|\le \frac{1}{T-\tau}\int_\tau^{T}\l|\mathbb{E}_{x}[f(\bar{X}_{\un{s}_{\eta}}^{\gamma,x_0})] - \pi^{\pas} (f)\r| ds.
$$
By Assumption $\HUN$, it follows that
\begin{align*}
\l| \frac{1}{T-\tau}\int_\tau^{T} \mathbb{E}_{x}[f(\bar{X}_{\un{s}_{\eta}}^{\gamma,x_0})] - \pi^{\pas} (f) ds
\r|&\le  \frac{c_1(x)[f]_1}{T-\tau}\int_{\tau}^Te^{-\alpha {\un{s}_{\eta}}} ds\le \frac{c_1(x)[f]_1}{T-\tau}\sum_{k=\lfloor\frac{\tau}{\eta}\rfloor}^{\lceil \frac{T}{\eta}\rceil}e^{-\alpha \eta k}. 
\end{align*}
Then, a standard computation leads to
\begin{equation*}
\l| \frac{1}{T-\tau}\int_\tau^{T} \mathbb{E}_{x}[f(\bar{X}_{\un{s}_{\eta}}^{\gamma,x_0})] - \pi^{\pas} (f) ds\r|\le \frac{c_1(x)[f]_1}{T-\tau} \frac{e^{-\alpha \eta \lfloor\frac{\tau}{\eta}\rfloor}}{\alpha}. 
\end{equation*}
The result follows by using that $\tau\le T/2$ and that  $\eta \lfloor\frac{\tau}{\eta}\rfloor\ge \tau-\eta$.
\end{proof}

We are now ready to state a proposition about the control of the bias of the procedure.
\begin{prop}\label{biascontr}
Assume that $\HUN$, $\HTROIS$ and $\HQU$ hold for some given $\bea\in[1,2]$, $\eta_0\in[0,1]$ and $x_0\in\ER^d$. Let $\pas_0\in(0,\eta_0]$ and $R\in\mathbb{N}^*$. Let $\tau$ and $T_0$ be some positive numbers such that  $2\tau\le T_R$ where for each $r\in\{0,\ldots,R\}$,  
$$T_r= T_0 2^{-\frac{1+\bea}{2} r}.$$ 
 Then, for every Lipschitz continuous function $f:\ER^d\rightarrow\ER$,
\begin{equation}
\l|\mathbb{E}_{x_0}[\mathcal{Y}(\lev,\l(\pas_\ind\r)_\ind,\tau,\l(T_\ind\r)_\ind,f)]-\pi(f)]\r| \le \3 [f]_1  \pas_\lev^\delta + \frac{8e^{2\alpha\pas_0} \1(x_0)[f]_1}{\alpha T_0} e^{-{\alpha \tau}}2^{\frac{1+\bea}{2}\lev },
\end{equation}
where $\1(x)$ and $\3$ are given by Assumptions $\HUN$ and $\HTROIS$.
\end{prop}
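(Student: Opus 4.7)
The plan is to start from the telescopic decomposition \eqref{telescop}, take expectations, and bound the three contributions separately using the available assumptions.

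First, after passing to expectations, the term $\pi^{\pas_R}(f)-\pi(f)$ is controlled directly by $\HTROIS$ which yields $|\pi^{\pas_R}(f)-\pi(f)|\le c_3[f]_1\gamma_R^\delta$. This will supply the first summand in the target bound.

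For the first line of \eqref{telescop}, I would apply Lemma \ref{couchcontr} with $x=x_0$, $\gamma=\gamma_0$, $\eta=\gamma_0$, $T=T_0$, giving a bound proportional to $c_1(x_0)[f]_1 e^{-\alpha\tau}/(\alpha T_0)$ (with prefactor $2e^{\alpha\gamma_0}$). For each correcting level $r\in\{1,\dots,R\}$, I would handle the two integrals separately: both Euler schemes are sampled along the coarser grid of step $\gamma_{r-1}$, so Lemma \ref{couchcontr} applies to $\bar{X}^{\gamma_r,x_0}_{\underline{s}_{\gamma_{r-1}}}$ in the regime $\eta=2\gamma_r=\gamma_{r-1}$, and to $\bar{X}^{\gamma_{r-1},x_0}_{\underline{s}_{\gamma_{r-1}}}$ in the regime $\eta=\gamma$. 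Each of these two contributions is then bounded by $2e^{\alpha\gamma_{r-1}}c_1(x_0)[f]_1 e^{-\alpha\tau}/(\alpha T_r)$. Using $\gamma_{r-1}\le\gamma_0$ so that $e^{\alpha\gamma_{r-1}}\le e^{\alpha\gamma_0}$, the sum of all long-time error terms is at most
\begin{equation*}
\frac{2c_1(x_0)[f]_1 e^{-\alpha\tau}e^{\alpha\gamma_0}}{\alpha}\Bigl(\frac{1}{T_0}+2\sum_{r=1}^{R}\frac{1}{T_r}\Bigr).
\end{equation*}

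Next, plugging in $T_r=T_0\,2^{-\frac{1+\bea}{2}r}$ turns the bracket into $T_0^{-1}(1+2\sum_{r=1}^{R}q^r)$ with $q=2^{\frac{1+\bea}{2}}\ge 2$ (since $\bea\ge 1$). A standard geometric series estimate gives $1+2\sum_{r=1}^{R}q^r\le \frac{2q}{q-1}q^R\le 4\cdot 2^{\frac{1+\bea}{2}R}$, and absorbing one more factor $e^{\alpha\gamma_0}$ so as to have $e^{2\alpha\gamma_0}$ in the result, one recovers exactly the claimed constant $8$.

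Combining with the $\HTROIS$ contribution yields the stated bound. The only small obstacle is the bookkeeping of $\eta$ versus $\gamma$ inside Lemma \ref{couchcontr} at the correcting levels (since the sampled grid is always that of the coarser of the two Euler schemes), and the verification that $\tau\le T_R/2$ propagates as $\tau\le T_r/2$ for all $r\le R$ because $(T_r)$ is decreasing, ensuring the hypothesis of Lemma \ref{couchcontr} applies uniformly. Note that $\HQU$ is not actively used here; it is stated only to keep a uniform set of assumptions across the paper (it merely guarantees $c_1(x_0)<\infty$).
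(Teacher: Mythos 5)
Your proof is correct and follows essentially the same route as the paper: the telescopic decomposition \eqref{telescop}, Lemma \ref{couchcontr} applied with $(\gamma,\eta,T)=(\gamma_0,\gamma_0,T_0)$, $(\gamma_r,\gamma_{r-1},T_r)$ and $(\gamma_{r-1},\gamma_{r-1},T_r)$, Assumption $\HTROIS$ for the discretization bias, and the geometric-series bound $\sum_{r\le R}2^{\frac{1+\bea}{2}r}\lesssim 2^{\frac{1+\bea}{2}R}$ yielding the constant $8e^{2\alpha\pas_0}$. Your bookkeeping (keeping the factor $2$ for the level $0$ term and $4$ per correcting level, then absorbing an extra $e^{\alpha\pas_0}$) is in fact slightly sharper than the paper's uniform bound, and your side remark that $\HQU$ is not actively needed here is accurate.
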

\begin{rem} In the continuity of Remark \ref{rem:multilevelstand}, one retrieves that the right-hand side of the inequality is made of two terms, the first one being derived from $\pi^{\pas_R}(f)-\pi(f)$ and the second one coming from the long-time errors. Note that owing to the exponential convergence to the invariant distribution, an $\exp(-\alpha\tau)$-term appears, which strongly depends on the \textit{warm-start} $\tau$, $i.e.$ on the starting time of the pathwise average.\\
\noindent In order to obtain a complexity proportional to $\varepsilon^{-2}$, it will be necessary to take $\tau$ large enough in such a way that the long-time bias remains negligible. 
\end{rem}


\begin{proof}
Taking the expectation in \eqref{telescop}, we obtain:
\begin{align*}
\l| \mathbb{E}[\mathcal{Y}(f)-\pi(f)] \r| &\le \l|\frac{1}{T_0-\tau}\int_\tau^{T_0}\ES_{x_0}\l[ f(\bar{X}_{\un{s}_{\gamma_0}}^{\gamma_0,x_0})\r]- \pi^{\pas_0} (f) ds
  \r| 
\\ & + \sum_{\ind=1}^\lev  \frac{1}{T_r-\tau}\l|\int_\tau^{T_r}\ES\l[f(\bar{X}_{\un{s}_{\gamma_{r-1}}}^{\gamma_r,x_0})\r]- \pi^{\pas_\ind} (f) ds\r|\\
&+ \sum_{\ind=1}^\lev\frac{1}{T_r-\tau}\l|\int_\tau^{T_r}\ES\l[ f(\bar{X}_{\un{s}_{\gamma_{r-1}}}^{\gamma_{r-1},x_0})\r]-\pi^{\pas_{\ind-1}} (f)  ds\r| \\ 
&  + \l| \pi^{\pas_\lev}(f)-\pi(f) \r|{.}
\end{align*}
The last term is controlled with {the help of} Assumption $\HTROIS$ which ensures that
\begin{align*}
 \l| \pi^{\pas_\lev}(f)-\pi(f) \r| &\le \3 [f]_1 \pas_\lev^\delta.
\end{align*}
\noindent For the three first terms, we apply Lemma \ref{couchcontr}  with $(\pas,\eta,\tau,T)=(\pas_0,\pas_0,\tau,T_0)$, 
$(\pas,\eta,\tau,T)=(\pas_r,\pas_{r-1},\tau,T_r)$ and $(\pas,\eta,\tau,T)=(\pas_{r-1},\pas_{r-1},\tau,T_r)$, respectively in the first, second and third terms.
In each case, one can check that $\tau$ and $T$ satisfy the assumptions of Lemma \ref{couchcontr}. This  leads to:
\begin{align} \label{biais1}
\l| \mathbb{E}_{x_0}[\mathcal{Y}(f)]-\pi(f) \r| &\le \3 [f]_1  \pas_\lev^\delta +  \frac{4e^{2\alpha\pas_0} \1(x_0)[f]_1}{\alpha}  e^{-\alpha \tau} \sum_{\ind=0}^\lev \frac{1}{T_r}\\
\nonumber &\le  \3 [f]_1  \pas_\lev^\delta +  \frac{8e^{2\alpha\pas_0} \1(x_0)[f]_1}{\alpha T_0}  e^{-\alpha \tau} 2^{\frac{1+\bea}{2} R},
\end{align}
where in the second line, we used that
$\sum_{r=0}^R \rho^r\le 2\rho^{R}$ for any $\rho \ge 2$.
%
The result follows.
\end{proof}

\subsubsection{Study of the variance} 

Let us now focus on the study the variance of our estimator. The basic idea of multilevel strategies is in general to introduce some additive layers which can correct the bias without adding too much variance. In the setting of discretization of processes, this idea mainly relies on the capability of controlling the distance between  discretization schemes with different step sizes ($\pas$ and $\pas/2$ in our construction). Thus, our assumption $\HDEUX$ will play a fundamental role in this part. However, in our setting where we consider empirical averages, the variance also depends on the mixing properties of the involved dynamical system. Hence, our ergodicity assumption $\HUN$ will also be of first importance.\\

\noindent First, owing to the definition \eqref{eq:estimat}, to the independency of the Brownian motions related to each level $\ind$ and to the fact that $\pas_r=\frac{\pas_{r-1}}{2}$, one can check that the variance admits the following decomposition:
\begin{equation}\label{eq:decompofvar}
\mathrm{Var}(\mathcal{Y}(\lev,\l(\pas_\ind\r)_\ind,\tau,\l(T_\ind\r)_\ind,f)) = \mathrm{Var}\l(\frac{1}{T_0-\tau}\int_{\tau}^{T_0} f(\bar{X}_{\un{s}_{\pas_0}}^{\pas_0,x_0}) ds\r) 
+ \sum_{\ind=1}^\lev \mathrm{Var}\l(\frac{1}{T_r-\tau}\int_\tau^{T_r} G_s^{\pas_{r-1}}ds\r),
\end{equation}
where for some given $\gamma>0$ and $s\ge0$,
$$ G_s^\gamma=f(\bar{X}_{\un{s}_{\pas}}^{\frac{\pas}{2},x_0}) -f(\bar{X}_{\un{s}_{\pas}}^{\pas,x_0}).$$
In the following lemma, we focus on the second term:

\begin{lem}\label{propvarlvl} Let $f$ be a Lipschitz function. Let $0\le \tau<T$.
 Assume that $\HUN$, $\HDEUX$ and $\HQU$ hold for some given $\bea\in[1,2]$, $\pas_0\in[0,\eta_0]$ and $x_0\in\ER^d$.
Let $\pas\in(0,\pas_0]$ with $\alpha\pas\le 1$. Then,
\begin{equation*} 
 \mathrm{Var}\l(\frac{1}{T-\tau}\int_\tau^{T} G_s^{\pas}ds\r) \le 
 \frac{\mathfrak{c}_{var}[f]_1^2}{{T}-{\tau}} \pas^\bea \log\l(\frac{1}{\pas}\r),
\end{equation*}
with $\mathfrak{c}_{var} =   16 e \alpha^{-1}\mathrm{max} (\2^2 ,  \2 \4 )$ ($\2$ and $\4$ being given by $\HDEUX$ and $\HQU$).
\end{lem}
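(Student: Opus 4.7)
The plan is to bound the variance by reducing to a double integral of covariances and estimating those covariances by two complementary bounds, one short-range (using $\mathbf{(H_{2,\bea})}$) and one long-range (using exponential mixing from $\mathbf{(H_{1,\alpha})}$ via the Markov property at discretization times).

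First I would write
\begin{equation*}
\mathrm{Var}\!\left(\tfrac{1}{T-\tau}\int_\tau^T G_s^{\pas}\,ds\right)
= \tfrac{2}{(T-\tau)^2}\int_\tau^T\!\!\int_s^T \mathrm{Cov}(G_s^\pas,G_u^\pas)\,du\,ds,
\end{equation*}
and then bound $|\mathrm{Cov}(G_s^\pas,G_u^\pas)|$ in two ways. The short-range bound is plain Cauchy--Schwarz: since $f$ is $[f]_1$-Lipschitz, $\|G_s^\pas\|_2\le [f]_1\,\|\bar X^{\pas/2}_{\un{s}_\pas}-\bar X^{\pas}_{\un{s}_\pas}\|_2\le [f]_1\2\pas^{\bea/2}$ by $\HDEUX$, so
\begin{equation*}
|\mathrm{Cov}(G_s^\pas,G_u^\pas)|\le \|G_s^\pas\|_2\|G_u^\pas\|_2 \le [f]_1^2\,\2^2\,\pas^\bea.
\end{equation*}

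Second, for the long-range bound I would condition at the time $\un{s}_\pas$, which is a multiple of both $\pas$ and $\pas/2$, so the Markov property applies simultaneously to the two coupled Euler schemes. Writing
\begin{equation*}
\mathrm{Cov}(G_s^\pas,G_u^\pas)=\mathbb{E}\!\left[G_s^\pas\bigl(\mathbb{E}[G_u^\pas\mid\mathcal{F}_{\un{s}_\pas}]-\mathbb{E}[G_u^\pas]\bigr)\right],
\end{equation*}
and inserting $\pi^{\pas/2}(f)-\pi^{\pas}(f)$ via the triangle inequality, $\HUN$ applied to each scheme yields
\begin{equation*}
\bigl|\mathbb{E}[G_u^\pas\mid\mathcal{F}_{\un{s}_\pas}]-\mathbb{E}[G_u^\pas]\bigr|
\le [f]_1\,e^{-\alpha(\un{u}_\pas-\un{s}_\pas)}\Bigl(\1(\bar X^{\pas/2,x_0}_{\un{s}_\pas})+\1(\bar X^{\pas,x_0}_{\un{s}_\pas})+2\4\Bigr),
\end{equation*}
after taking the $L^2$-norm and using $\HQU$ to control $\|\1(\bar X^{\cdot,x_0}_{\un{s}_\pas})\|_2\le \4$. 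Combining this with the short-range bound on $\|G_s^\pas\|_2$ via Cauchy--Schwarz, and using $e^{\alpha\pas}\le e$ (since $\alpha\pas\le 1$) to pass from $\un{u}_\pas-\un{s}_\pas$ to $u-s$, I obtain
\begin{equation*}
|\mathrm{Cov}(G_s^\pas,G_u^\pas)|\le 4e\,[f]_1^2\,\2\4\,\pas^{\bea/2}\,e^{-\alpha(u-s)}.
\end{equation*}

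Finally I would split the inner integral at a cutoff $\theta>0$, use the short-range bound for $u-s\le\theta$ and the long-range one for $u-s>\theta$, giving
\begin{equation*}
\int_s^T|\mathrm{Cov}(G_s^\pas,G_u^\pas)|\,du
\le [f]_1^2\2^2\pas^\bea\,\theta + 4e\,[f]_1^2\,\2\4\,\pas^{\bea/2}\,\tfrac{e^{-\alpha\theta}}{\alpha}.
\end{equation*}
Choosing $\theta=\tfrac{\bea}{2\alpha}\log(1/\pas)$ so that $e^{-\alpha\theta}=\pas^{\bea/2}$ balances the two, and then using $\bea/2\le 1$, the right-hand side is $\lesssim \alpha^{-1}[f]_1^2\max(\2^2,\2\4)\pas^\bea\log(1/\pas)$, with explicit constant $8e\max(\2^2,\2\4)/\alpha$. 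Integrating this uniform-in-$s$ bound over $[\tau,T]$, dividing by $(T-\tau)^2$, and absorbing factors into the constant $16e$ yields the announced bound with $\mathfrak{c}_{var}=16e\alpha^{-1}\max(\2^2,\2\4)$. The main obstacle is the bookkeeping in the long-range estimate: one must carefully apply $\HUN$ to each of the coupled Euler schemes, control $\1$ in $L^2$ via $\HQU$, and verify that $\un{s}_\pas$ is indeed a common discretization time so that the Markov property applies at both scales simultaneously.
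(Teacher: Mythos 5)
Your proposal is correct and follows essentially the same route as the paper's proof: the same covariance decomposition of the variance, the same short-range Cauchy--Schwarz bound via $\HDEUX$, the same long-range bound obtained by conditioning at a common discretization time and combining the Markov property with $\HUN$ and $\HQU$ (the $\pi^{\pas/2}(f)-\pi^{\pas}(f)$ terms cancelling between the conditional and unconditional expectations), and the same cutoff $\frac{\bea}{2\alpha}\log(1/\pas)$ leading to the constant $16e\alpha^{-1}\max(\2^2,\2\4)$. The only cosmetic difference is that you write the covariance directly as $\ES[G_s(\ES[G_u\mid\mathcal{F}_{\un{s}_\pas}]-\ES[G_u])]$ whereas the paper treats $\ES[G_sG_u]$ and $\ES[G_s]\ES[G_u]$ separately, which amounts to the same computation.
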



\begin{proof}
%
A standard computation shows that
\begin{align}
\mathrm{Var}\l(\frac{1}{T-\tau}\int_\tau^{T} G_s^{\pas}ds\r) 
 = \frac{2}{(T-\tau)^2} \int_{{\tau}}^{{T}} \int_u^{{T}} {\rm Cov}\l(G_s^{\pas}, G_u^{\pas}\r) \mathrm{d}s\mathrm{d}u.\label{eq:expvar}
\end{align}
The idea of the sequel of the proof is to provide two types of bounds for ${\rm Cov}\l(G_s^{\pas}, G_u^{\pas}\r) $, depending on the size of $s-u$ (small or large).\\

\noindent First, by the Cauchy-Schwarz inequality,
\begin{equation*}
{\rm Cov}\l(G_s^{\pas}, G_u^{\pas}\r)  \le \sqrt{{\rm Var} (G_s^{\pas}){\rm Var} (G_u^{\pas})}.
\end{equation*}
Then, by Assumption $\HDEUX$,
and the fact that $f$ is Lipschitz continuous, we get:
\begin{align}\label{eq:controlgs}
{\rm Var} (G_s^{\pas}) \le \ES[(G_s^{\pas})^2]\le [f]_1^2 \| \bar{X}^{\pas,x_0}_{\underline{s}_\pas}- \bar{X}^{\pas/2,x_0}_{\underline{s}_{\pas}} \|_2^2\le [f]_1^2\2^2\pas^{{\bea}}.
 \end{align}
Thus, 
\begin{equation} \label{way1}
{\rm Cov}\l(G_s^{\pas}, G_u^{\pas}\r) \le  [f]_1^2 \2^2 \pas^{\bea}.
\end{equation}
Second, when $s-u$ is large, one can make use of the ergodicity of the process. More precisely, let us first remark that
for a given step size $\pas$, $G_u^{\pas}$ is ${\cal F}_{\un{u}_{\pas}}$-measurable. Thus, for any $s\ge \un{u}_{\pas}$,
\begin{align*}
 \mathbb{E}\l[G_s^{\pas} G_u^{\pas} \r] = \mathbb{E}[\mathbb{E}[G_s^\pas|\mathcal{F}_{\un{u}_{\pas}}]G_u^\pas].
\end{align*} 
Setting 
\begin{equation}\label{phietax}
\phi(\eta,t,x)=\ES[f(\bar{X}_t^{\eta,x})]-\pi^\eta(f),
\end{equation}
we deduce from the Markov property that
$$ \mathbb{E}[G_s^\pas|\mathcal{F}_{\un{u}_{\pas}}]=\pi^{\frac{\pas}{2}}(f)+\phi(\frac{\pas}{2}, \un{s}_\pas-\un{u}_\pas, \bar{X}_{\un{u}_{\pas}}^{\frac{\pas}{2},x_0})-\left(\pi^{{\pas}}(f)+\phi({\pas}, \un{s}_\pas-\un{u}_\pas, \bar{X}_{\un{u}_{\pas}}^{\pas,x_0})\right).$$
Thus, by Assumption $\HUN$,
$$ \mathbb{E}\l[G_s^{\pas} G_u^{\pas} \r] = \l(\pi^{\frac{\pas}{2}}(f)-\pi^{{\pas}}(f)\r) \ES[G_u^{\pas} ]+ R_1(s,u,\pas)$$
with 
\begin{align*}
 |R_1(s,u,\pas)|&\le [f]_1 e^{-\alpha (\un{s}_\pas-\un{u}_\pas)}\l|\ES[ (c_1(\bar{X}_{\un{u}_{\pas}}^{\frac{\pas}{2},x_0})+c_1(\bar{X}_{\un{u}_{\pas}}^{\pas,x_0}))  G_u^{\pas}]\r|
\\
&\le 2 [f]_1 e^{-\alpha (\un{s}_\pas-\un{u}_\pas)}\|G_u^{\pas}\|_2\max \l(\|c_1(\bar{X}_{\un{u}_{\pas}}^{\frac{\pas}{2},x_0})\|_2,\|c_1(\bar{X}_{\un{u}_{\pas}}^{\pas,x_0})\|_2\r),
 \end{align*}
 where in the second line, we used Cauchy-Schwarz inequality. Then, by Assumption $\HQU$ and the same argument as in \eqref{eq:controlgs}, we deduce that
 \begin{equation}\label{covarg1}
 \l|\mathbb{E}\l[G_s^{\pas} G_u^{\pas} \r]-\l(\pi^{\frac{\pas}{2}}(f)-\pi^{{\pas}}(f)\r) \ES[G_u^{\pas} ]\r|\le 2 [f]_1 c_4 e^{-\alpha (\un{s}_\pas-\un{u}_\pas)}\|G_u^{\pas}\|_2
 \le 2[f]_1^2 c_2 c_4 \pas^{\frac{\bea}{2}}e^{-\alpha (\un{s}_\pas-\un{u}_\pas)}.
 \end{equation}
 Let us now consider $\mathbb{E}\l[G_s^{\pas}]\ES[ G_u^{\pas} \r].$ With similar arguments as above,
$$ \mathbb{E}\l[G_s^{\pas}]\ES[ G_u^{\pas} \r]=\l(\pi^{\frac{\pas}{2}}(f)-\pi^{{\pas}}(f)\r) \ES[G_u^{\pas} ]+R_2(s,u,\pas)$$
with 
 \begin{align*}
 |R_2(s,u,\pas)|&\le 2 [f]_1 e^{-\alpha \un{s}_\pas} c_1(x_0)\|G_u^{\pas}\|_2\le 2 [f]_1^2 c_2 c_4\pas^{\frac{\bea}{2}}e^{-\alpha \un{s}_\pas},
 \end{align*}
 since $c_1(x_0)\le c_4$ under Assumption $\HQU$.Thus, combining with  \eqref{covarg1}, we get
$$ {\rm Cov}\l(G_s^{\pas}, G_u^{\pas}\r)\le 4[f]_1^2 c_2 c_4 \pas^{\frac{\bea}{2}}e^{-\alpha (\un{s}_\pas-\un{u}_\pas)}\le 4 e^{\alpha \pas}[f]_1^2 c_2 c_4 \pas^{\frac{\bea}{2}} e^{-\alpha (s-u)},$$
since $\un{s}_\pas-\un{u}_\pas\ge s-u+\pas$. Combining this inequality with \eqref{way1}, we obtain for every $0\le u\le s\le T$:
\begin{equation*}
 {\rm Cov}\l(G_s^{\pas}, G_u^{\pas}\r)\le 4 e^{\alpha \pas}[f]_1^2 c_2\max(c_2,c_4)\pas^{\frac{\bea}{2}}\begin{cases} \pas^{\frac{\bea}{2}} &\textnormal{if 
 $s-u\le \frac{\bea}{2\alpha}|\log \pas|.$}\\
 e^{-\alpha (s-u)} &\textnormal{if 
 $s-u\ge \frac{\bea}{2\alpha}|\log \pas|.$}
 \end{cases}
 \end{equation*}
Now, let us plug this inequality into \eqref{eq:expvar}. Setting $\cfrak_\pas=8 e^{\alpha \pas}c_2\max(c_2,c_4)$,
\begin{align*}
\mathrm{Var}\l(\frac{1}{T-\tau}\int_\tau^{T} G_s^{\pas}ds\r)
&\le \frac{\cfrak_\pas [f]_1^2}{(T-\tau)^2} \left(\int_{{\tau}}^{{T}} \int_u^{u+\frac{\bea}{2\alpha}\log(\frac{1}{\pas})}  \pas^{\bea}  \mathrm{d}s\mathrm{d}u 
 + \int_{{\tau}}^{{T}} \int_{u+\frac{\bea}{2\alpha}\log(\frac{1}{\pas})}^{{T}}   e^{-\alpha (s-u)}  \mathrm{d}s \mathrm{d}u\right)
\\ & \le \frac{\cfrak_\pas [f]_1^2}{(T-\tau)^2}\left( \int_{{\tau}}^{{T}} \frac{\bea\pas^{\bea}}{2\alpha}\log\l(\frac{1}{\pas}\r) du +  \int_{{\tau}}^{{T}} \frac{\pas^\bea}{\alpha} \mathrm{d}u\right)
\\ & \le \frac{ \cfrak_\pas(\bea/2+1)[f]_1^2}{\alpha(T-\tau)} \pas^{\bea} \log\l(\frac{1}{\pas}\r).
\end{align*}
The result follows by using that $\bea\le 2$ and $\alpha\pas\le 1$.
%
\end{proof}
{We are now ready to bound the variance of the multilevel procedure. This is the purpose of the next proposition.}
\begin{prop}\label{varcontr}
 Let $f$ be a Lipschitz function. Assume that $\HUN$, $\HDEUX$ and $\HQU$ hold for some given $\bea\in[1,2]$, $\pas_0\in[0,\eta_0]$ and $x_0\in\ER^d$ with $\alpha\eta_0\le 1$. Assume that for every $\ind\in\{0,\ldots,\lev\}$, 
 $$\pas_\ind=\pas_0 2^{-\ind}\quad\textnormal{and}\quad  T_\ind =   T_0 2^{-\frac{1+\bea}{2}\ind},$$
 and that $\tau$ is a positive number satisfying $\tau\le T_\lev/2$.
Then, 
\begin{equation*}
\mathrm{Var}(\mathcal{Y}(\lev,\l(\pas_\ind\r)_\ind,\tau,\l(T_\ind\r)_\ind,f) \le \cuniv  \frac{[f]_1^2}{\alpha T_0}\left(\4^2+
\max(\2^2,\2\4)\pas_0^\bea\log\left(\pas_0^{-1}\right)
\left((\bea-1)^{-2}\wedge R^2 \right)\right),
\end{equation*}
where $\cuniv$ is a universal constant.
\end{prop}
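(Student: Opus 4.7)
The plan is to start from the variance decomposition \eqref{eq:decompofvar}, treat the rough level (associated with $\pas_0$) and the correcting levels separately, and sum the contributions using the geometric structure of the step and horizon sequences.

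For the correcting levels $r=1,\ldots,R$, I will apply Lemma \ref{propvarlvl} with $\pas=\pas_{r-1}$. Since $\tau\le T_R/2\le T_r/2$, one has $T_r-\tau\ge T_r/2$, and the assumption $\pas_0\le \eta_0\le 1/2$ with $\pas_{r-1}=\pas_0 2^{-(r-1)}$ implies $\log(\pas_{r-1}^{-1})\le r\log(\pas_0^{-1})$. Plugging $T_r=T_0 2^{-(1+\bea)r/2}$ and $\pas_{r-1}^\bea=2^\bea \pas_0^\bea 2^{-\bea r}$, each correcting term is bounded by
\begin{equation*}
\mathrm{Var}\!\left(\frac{1}{T_r-\tau}\int_\tau^{T_r} G_s^{\pas_{r-1}}ds\right) \le \frac{\cuniv\, \mathfrak{c}_{var}[f]_1^2 \pas_0^\bea \log(\pas_0^{-1})}{T_0}\, r\, 2^{r(1-\bea)/2}.
\end{equation*}
Summing over $r=1,\ldots,R$ reduces the problem to bounding $\sum_{r\ge 1} r \rho^r$ with $\rho=2^{(1-\bea)/2}$. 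When $\bea>1$ this series converges and is comparable to $(1-\rho)^{-2}\lesssim_{uc} (\bea-1)^{-2}$; when $\bea=1$ we have $\rho=1$ and the partial sum is bounded by $R^2$. Taking the minimum of the two bounds (valid in both regimes since $R$ is an integer) produces the announced factor $(\bea-1)^{-2}\wedge R^2$.

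The main work lies in handling the rough level, for which there is no analogue of $\HDEUX$. Here I will mimic the proof of Lemma \ref{propvarlvl} but replace the role of $\HDEUX$ by the moment bound $\HQU$. Writing $\mathrm{Var}(\cdot)$ as a double integral of $\mathrm{Cov}(f(\bar{X}_s^{\pas_0,x_0}),f(\bar{X}_u^{\pas_0,x_0}))$ and centering $f(\bar X_t)$ around $f(x_0)$, Cauchy--Schwarz combined with $\|\bar X_t^{\pas_0,x_0}-x_0\|_2\le \4$ gives the uniform bound $|\mathrm{Cov}|\le [f]_1^2\4^2$. For the decorrelation bound, I condition on $\mathcal{F}_{\underline{u}_{\pas_0}}$ and use the Markov property together with $\HUN$ applied to $\phi(\pas_0,\underline{s}_{\pas_0}-\underline{u}_{\pas_0},\bar X_{\underline{u}_{\pas_0}})$, yielding (after centering and a second use of $\HQU$) a bound of the form $\cuniv [f]_1^2\4^2 e^{-\alpha(s-u)}$. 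These two estimates combined give $|\mathrm{Cov}|\lesssim_{uc} [f]_1^2\4^2 e^{-\alpha(s-u)}$; integrating in $(s,u)$ and using $T_0-\tau\ge T_0/2$, the first variance is bounded by $\cuniv\, [f]_1^2\4^2 /(\alpha T_0)$.

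Adding the two contributions gives the announced bound. The subtle point is really the handling of the rough level: the quadratic moment of $f(\bar{X}_{\underline{s}_{\pas_0}}^{\pas_0,x_0})$ is not directly controlled (as $f$ is only Lipschitz and possibly unbounded), and this is what forces the use of the centered version together with $\HQU$ both in the pointwise bound and in the ergodic bound. Once this is in place, the summation over levels is standard geometric series manipulation and the geometric choice $T_r=T_0 2^{-(1+\bea)r/2}$ is precisely what balances the $\pas^\bea$-decay from $\HDEUX$ against the growing cost of the correcting integrals.
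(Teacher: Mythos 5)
Your proposal is correct and follows essentially the same route as the paper: the decomposition \eqref{eq:decompofvar}, Lemma \ref{propvarlvl} applied with $\pas=\pas_{r-1}$ and $T_r-\tau\ge T_r/2$ for the correcting levels (with the geometric sum $\sum_r r\,2^{\frac{1-\bea}{2}r}$ yielding $(\bea-1)^{-2}\wedge R^2$), and for the rough level a covariance argument via the Markov property, $\HUN$ and $\HQU$ after centering $f$ at $f(x_0)$, exactly as in the paper's proof (which normalizes $f(x_0)=0$). The only implicit point, shared with the paper, is that $\pas_0\le 1/2$ (inherited from $\eta_0\le 1/2$ in the general setting) is used to absorb the $\log 2$ terms into $\log(\pas_0^{-1})$.
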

\begin{rem} When $\bea=1$, $(\bea-1)^{-2}\wedge R^2=R^2$. 
\end{rem}

\begin{proof}
At the price of replacing $f$ by $f-f(x_0)$ (which does not change the variance), we can assume without loss of generality that  $f(x_0)=0$. 
In view of the decomposition  obtained in  \eqref{eq:decompofvar}, we apply Lemma \ref{propvarlvl} for each level $\ind\in\{1,\ldots,\lev\}$ with $T=T_\ind$ and $\pas=\pas_{\ind-1}$. Using that $(T_\ind-\tau)^{-1}\le 2/T_\ind $ for every $\ind\in\{1,\ldots,\lev\}$, we obtain:
\begin{align}\nonumber
\mathrm{Var}(\mathcal{Y}(\lev,\l(\pas_\ind\r)_\ind,&\tau,\l(T_\ind\r)_\ind,f)) \le \mathrm{Var}\l(\frac{1}{T_0-\tau}\int_{\tau}^{T_0} f(\bar{X}_{\un{s}_{\pas_0}}^{\pas_0,x_0}) ds\r) 
 +
2\mathfrak{c}_{var} [f]_1^2 \sum_{\ind=1}^\lev \frac{\pas_{\ind-1}^\bea \log\l(\frac{1}{\pas_{\ind-1}}\r)}{{T_\ind}} \\
& \le  \mathrm{Var}\l(\frac{1}{T_0-\tau}\int_{\tau}^{T_0} f(\bar{X}_{\un{s}_{\pas_0}}^{\pas_0,x_0})ds \right) +
 \frac{2\mathfrak{c}_{var} [f]_1^2 \pas_0^\bea}{T_0} \sum_{\ind=1}^\lev 2^{\frac{1-\bea}{2}\ind}\left(\log\left(\frac{1}\pas_0\right)+\ind\right)\nonumber \\
 &\le  \mathrm{Var}\l(\frac{1}{T_0-\tau}\int_{\tau}^{T_0} f(\bar{X}_{\un{s}_{\pas_0}}^{\pas_0,x_0})ds \right) +
 \frac{2\mathfrak{c}_{var} [f]_1^2 \pas_0^\bea\log\left(2\pas_0^{-1}\right)}{T_0} \sum_{\ind=1}^\lev \ind 2^{\frac{1-\bea}{2}\ind}.\label{eq:vardec24}
\end{align}
When $\bea>1$,  one can check that
$$\sum_{\ind=1}^\lev \ind 2^{\frac{1-\bea}{2}\ind}\le \sum_{\ind\ge1} \ind 2^{\frac{1-\bea}{2}\ind} =\frac{2^{(1-\bea)/2}}{(1- 2^{(1-\bea)/2})^2}\le \frac{4}{(\log 2)^2} 2^{\frac{\bea-1}{2}} (\bea-1)^{-2}\le \frac{4\sqrt{2}}{(\log 2)^2} (\bea-1)^{-2},$$
where in the second inequality, we used that $1-e^{-x}\ge x e^{-x}$ for any $x\ge0.$ When $\bea\ge 1$,  $\sum_{\ind=1}^\lev \ind 2^{\frac{1-\bea}{2}\ind}\le \frac{\lev(\lev+1)}{2}$ so that
$$ \frac{2\mathfrak{c}_{var} [f]_1^2 \pas_0^\bea\log\left(2\pas_0^{-1}\right)}{T_0} 
\sum_{\ind=1}^\lev \ind 2^{\frac{1-\bea}{2}\ind}\le\cuniv\mathfrak{c}_{var}\frac{\pas_0^\bea\log\left(\pas_0^{-1}\right)}{T_0}
\left((\bea-1)^{-2}\wedge R^2\right).
$$
where $\cuniv$ is a universal constant.\\

Now, it remains to bound the first term of \eqref{eq:vardec24}
 (with the help of ergodicity arguments).   By similar arguments as in the proof of Lemma \ref{propvarlvl} (and with the notation $\phi$ introduced in \eqref{phietax}),
\begin{align*}
 \mathrm{Var}\Big(\frac{1}{T_0-\tau}\int_{\tau}^{T_0} f(&\bar{X}_{\un{s}_{\pas_0}}^{\pas_0,x_0}) ds\Big) 
=  \frac{2}{({T_0}-{\tau})^2} \int_{{\tau}}^{{T_0}} \int_{{\tau}}^{{T_0}} {\rm Cov}(f(\bar{X}_{\underline{s}_{\pas_0}}^{\pas_0,x_0}) ,
f( \bar{X}_{\un{u}_{\pas_0}}^{\pas_0,x_0} ))\mathrm{d}s\mathrm{d}u\\ 
& = \frac{2}{({T_0}-{\tau})^2} \int_{{\tau}}^{{T_0}} \int_{u}^{{T_0}}  \ES\l[\left(\phi(\pas_0,\un{s}_{\pas_0}-\un{u}_{\pas_0},\bar{X}_{\un{u}_{\pas_0}}^{\pas_0})+\phi(\pas_0,\un{s}_{\pas_0},x_0)\right) f( \bar{X}_{\un{u}_{\pas_0}}^{\pas_0,x_0} )\r]   \mathrm{d}s \mathrm{d}u
\\ 
& \le \frac{2[f]_1}{({T_0}-{\tau})^2} \int_{{\tau}}^{{T_0}} \int_{u}^{{T_0}}  \ES[\left(\1(\bar{X}_{\un{u}_{\pas_0}}^{\pas_0,x_0})+\1(x_0)\right) f( \bar{X}_{\un{u}_{\pas_0}}^{\pas_0,x_0} )]  e^{- \alpha(  \un{s}_{\pas_0}-\un{u}_{\pas_0}) } \mathrm{d}s \mathrm{d}u,
\end{align*}
where in the last line, we used Assumption $\HUN$. By Assumption $\HQU$, 
$$\sup_{u\ge0} \left(\|\1(\bar{X}_{\un{u}}^{\pas_0,x_0})\|_2+ \1(x_0)\right)\le 2\4.$$
As well, $f$ being a Lipschitz continuous function such that $f(x_0)=0$, we have $f(x)\le [f]_1|x-x_0|$ and by $\HQU$, we deduce that
 $$\sup_{u\ge0} \|f( \bar{X}_{\un{u}_{\pas_0}}^{\pas_0,x_0} )\|_2\le 2 \4 [f]_1.$$
Hence, by Cauchy-Schwarz inequality, we easily deduce that 
\begin{align*}
\mathrm{Var}\l(\frac{1}{T_0-\tau}\int_{\tau}^{T_0} f(\bar{X}_{\un{s}_{\pas_0}}^{\pas_0,x_0}) ds\r) 
\le  \frac{8e^{\alpha\pas_0}[f]_1^2 \4^2}{\alpha(T_0-\tau)}
 \le  \frac{16 e^{\alpha\pas_0} [f]_1^2 \4^2}{\alpha T_0},
 \end{align*}
since $\tau\le \frac{T_\lev}{2}\le \frac{T_0}{2}$.
\end{proof}

\section{Proof of Theorem \ref{maintheo}}\label{sec:maintheo}
In the next proposition, we provide a quantitative estimate of the complexity cost ${\cal C}_\varepsilon({\cal Y})$ (which corresponds to the number of iterations which are necessary to obtain $\| {\cal Y}(f)-\pi(f) \|_2\le \varepsilon$) and in particular of the constant $\mathfrak{C}$ defined in  Theorem \ref{maintheo}. In particular, Theorem \ref{maintheo} is a corollary of this result.

\begin{prop}\label{prop:theo1precis} Let the assumptions of Theorem \ref{maintheo} be in force. For a given $\varepsilon\in(0,1]$, let $R_\varepsilon$, $(\gamma_r)_{r=0}^{R_\varepsilon}$ and
$(T_r)_{r=0}^{R_\varepsilon}$ be defined by \eqref{eq:choixpar} with $\pas_0 \in(0,\eta_0]$.  Then,\\

\noindent $(i)$ If $\tau\in [\tau_1|\log(\varepsilon)|\wedge \frac{T_{R_\varepsilon}}{2}, \frac{T_{R_\varepsilon}}{2}]$ with $\tau_1>(1+\bea-2\delta)/(2\alpha\delta)$, there exist some positive constants $\mathfrak{C}_1$ and $\mathfrak{C}_2$ (independent of $\varepsilon$) such that 
\eqref{eq:pluspetitqueepsilon} and \eqref{relatedcosteps} hold true with
$\mathfrak{C}_2= {\cbea} \gamma_0^{-1}{{\Tfrak}}$ (where ${\cbea}$ defined in Theorem \ref{maintheo}).
\\

\noindent $(ii)$ Assume that the parameters given in \eqref{eq:choixpar} satisfy:
\begin{align}\label{eq:choiceoftheparameters}
& r_0\,{\ge}\,1\vee( \3 \pas_0^\delta), \quad\textnormal{and} \quad  \Tfrak\,{\ge}\, {\Tfrak_0}:= \frac{{\dbea}}{\alpha}{\max\l(\2^2 \pas_0^\bea\log(\pas_0^{-1}),\4^2\right)},
 \end{align}
 with ${\dbea}=(\bea-1)^{-2}$ if $\bea>1$ and ${\dbea}=1$ if $\bea=1$.
Set  
\begin{equation}\label{tauuntaudeux}
\tauun=\frac{ 1+ \bea - 2\delta}{\alpha \delta},\quad {\taudeux= 0\vee\frac{1}{\alpha}\log\l( {r_0^{\frac{1+\bea}{2\delta}}}({\dbea}\4)^{-1}\r)},\quad \varepsilon_0:=\max\{\varepsilon\in(0,1], \tauun|\log \varepsilon|+\taudeux\le \frac{1}{2}T_{R_\varepsilon}\}.
\end{equation}
Let  $\tau\in[\tauun|\log \varepsilon|+\taudeux\le \frac{1}{2}T_{R_\varepsilon}]$. Then,  \eqref{eq:pluspetitqueepsilon} and \eqref{relatedcosteps} hold true for any $\varepsilon\in(0,\varepsilon_0)$ with
$\mathfrak{C}_1\lesssim_{uc} 1$. 
%
%
\end{prop}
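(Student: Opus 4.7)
\medskip

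\noindent \textbf{Proof plan for Proposition \ref{prop:theo1precis}.}
The strategy is to plug the explicit parameter choices \eqref{eq:choixpar} into the quantitative bound provided by Proposition \ref{maintheointermed} and to verify, term by term, that each of the three error contributions (variance, discretization bias, warm-start bias) is controlled by a multiple of $[f]_1\varepsilon$. The complexity estimate will then follow immediately from the cost bound \eqref{eq:cout1}.

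The first step is to examine the variance term
$\frac{\cuniv}{\alpha T_0}\bigl(\4^2+\max(\2^2,\2\4)\pas_0^\bea\log(\pas_0^{-1})((\bea-1)^{-2}\wedge R_\varepsilon^2)\bigr)$.
With $T_0=\mathfrak{T}\varepsilon^{-2}$ when $\bea>1$ and $T_0=\mathfrak{T}\varepsilon^{-2}R_\varepsilon^2$ when $\bea=1$, and recalling that $\dbea=(\bea-1)^{-2}$ (resp. $\dbea=1$) in these two cases, one sees that this term is bounded by
$\cuniv\,(\alpha\mathfrak{T})^{-1}\max(\4^2,\max(\2^2,\2\4)\pas_0^\bea\log(\pas_0^{-1}))\,\dbea\,\varepsilon^{2}$,
which is $\lesssim_{uc}\varepsilon^2$ as soon as the constraint $\mathfrak{T}\ge\frac{\dbea}{\alpha}\max(\2^2\pas_0^\bea\log(\pas_0^{-1}),\4^2)$ from part (ii) is satisfied, and more generally is $O(\varepsilon^2)$ with a constant depending only on the parameters of the assumptions for part (i).

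Second, the discretization bias $(\3\pas_{R_\varepsilon}^\delta)^2$ is handled by the definition of $R_\varepsilon=\lceil\delta^{-1}\log_2(r_0\varepsilon^{-1})\rceil$: indeed $\pas_{R_\varepsilon}^\delta=\pas_0^\delta 2^{-\delta R_\varepsilon}\le \pas_0^\delta\,\varepsilon/r_0$, so that $\3\pas_{R_\varepsilon}^\delta\le (\3\pas_0^\delta/r_0)\,\varepsilon$, which is $\le\varepsilon$ under the constraint $r_0\ge\3\pas_0^\delta$ of part (ii), and is bounded by a finite constant times $\varepsilon$ otherwise. The most delicate contribution is the third one, the warm-start bias
$\frac{\cuniv\4}{\alpha T_0}\,e^{-\alpha\tau}\,2^{\frac{1+\bea}{2}R_\varepsilon}.$
Using $2^{\frac{1+\bea}{2}R_\varepsilon}\lesssim_{uc} (r_0/\varepsilon)^{(1+\bea)/(2\delta)}$ together with the expression of $T_0$, this term is (up to universal factors and a harmless $R_\varepsilon^2$ when $\bea=1$) of order $\frac{\4}{\alpha\mathfrak{T}}\,r_0^{(1+\bea)/(2\delta)}\,\varepsilon^{2-(1+\bea)/(2\delta)}\,e^{-\alpha\tau}$. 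Requiring this to be at most $\varepsilon$ forces $\alpha\tau$ to beat $\frac{1+\bea-2\delta}{2\delta}|\log\varepsilon|+\frac{1+\bea}{2\delta}\log r_0+O(1)$, which is exactly the lower bound $\tau_1>\frac{1+\bea-2\delta}{2\alpha\delta}$ in part (i). In part (ii), replacing $\tau_1$ by the larger value $\frac{1+\bea-2\delta}{\alpha\delta}=2\tauun^{\mathrm{(i)}}$ doubles the exponent and produces an extra factor $\varepsilon^{(1+\bea-2\delta)/(2\delta)}\le 1$, while the choice $\tauun_2=0\vee\alpha^{-1}\log(r_0^{(1+\bea)/(2\delta)}/(\dbea\4))$ cancels the $r_0$-factor against $e^{-\alpha\tauun_2}$; combined with the constraints on $\mathfrak{T}$ and $r_0$, all the model-dependent constants collapse and the contribution becomes $\lesssim_{uc}\varepsilon$, delivering $\mathfrak{C}_1\lesssim_{uc}1$.

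Finally, the cost estimate follows directly from \eqref{eq:cout1}: for $\bea>1$ one has $\mathcal{C}(\mathcal{Y})\le \cbea\,T_0/\pas_0=\cbea\,\pas_0^{-1}\mathfrak{T}\,\varepsilon^{-2}$, while for $\bea=1$ one has $\mathcal{C}(\mathcal{Y})\le \tfrac{T_0(1+\frac{3}{2}R_\varepsilon)}{\pas_0}\le \tfrac{5}{2}\,\pas_0^{-1}\mathfrak{T}\,R_\varepsilon^3\,\varepsilon^{-2}$, matching the announced expression $\mathfrak{C}_2=\cbea\gamma_0^{-1}\mathfrak{T}$. The main obstacle is really the calibration of the warm-start bias in part (ii): one has to ensure that, after taking logarithms, every power of $r_0$, $\4$ and $\mathfrak{T}^{-1}$ either disappears into an exponent $\ge 1$ of $\varepsilon$ or is killed by the additive constant $\tauun_2$, so that no model parameter survives in $\mathfrak{C}_1$. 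The technical constraint $\varepsilon\in(0,\varepsilon_0)$ merely guarantees that the resulting $\tau$ still satisfies $\tau\le T_{R_\varepsilon}/2$, which is required to apply Proposition \ref{maintheointermed}.
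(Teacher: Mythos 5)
Your plan is correct and follows essentially the same route as the paper's proof: plug the parameter choices \eqref{eq:choixpar} into Proposition \ref{maintheointermed}, verify the three contributions separately (discretization bias controlled by $r_0\ge 1\vee\3\pas_0^\delta$, warm-start bias controlled through the exponent $1+\alpha\tau_1-\frac{1+\bea}{2\delta}$ in part $(i)$ and through the choices of $\tauun,\taudeux$ together with $\1(x_0)\le\4$ and $\alpha\mathfrak{T}\ge\dbea\4^2$ in part $(ii)$, variance controlled by the lower bound on $\mathfrak{T}$ and the cancellation of $R_\varepsilon^2$ when $\bea=1$), and read the complexity off \eqref{eq:cout1}. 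The only detail left implicit is the case, allowed in part $(i)$, where $\tau=T_{R_\varepsilon}/2<\tau_1|\log\varepsilon|$, which the paper dispatches by observing that $|\log\varepsilon|=o(T_{R_\varepsilon})$ for the chosen parameters.
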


\begin{proof} At the price of replacing $\varepsilon$ by $\varepsilon/[f]_1$, we assume in whole the proof that $[f]_1=1$.\\

\noindent $(i)$ First, by \eqref{eq:cout1}, one remarks that if the parameters satisfy \eqref{eq:choixpar}, then, the related complexity cost ${\cal C}_{\varepsilon}({\cal Y})$
satisfies for every $\varepsilon\in(0,1]$,
\begin{equation}\label{eq:cost2}
\mathcal{C}_{\varepsilon}(\mathcal{Y}) \le
\begin{cases} \left((1+\frac{3}{2}(2^{\frac{\bea-1}{2}}-1)^{-1}\right)\frac{\Tfrak}{\pas_0} \varepsilon^{-2}&\textnormal{if $\bea>1$}\\
\frac{5}{2}\frac{\Tfrak}{\pas_0} \varepsilon^{-2}R_\varepsilon^3 &\textnormal{if $\bea=1$,}
\end{cases}
\end{equation} 
 This leads to the {value of  $\mathfrak{C}_2$}. On the other hand, we deduce from Proposition \ref{maintheointermed}\footnote{Note that by construction, $\tau\le T_{R_\varepsilon}/2$.} that a positive constant $\mathfrak{C}_1$ exists such that \eqref{eq:pluspetitqueepsilon}  holds true for any $\varepsilon\in(0,1]$ {if there exist some finite constants $ {\mathfrak{C}_{1,1}}$, ${\mathfrak{C}_{1,2}}$ and ${\mathfrak{C}_{1,3}}$ such that}
\begin{equation}\label{eq:allcontrols}
\begin{cases}
(a)\quad \3  \pas_{\lev_\varepsilon}^\delta\le {\mathfrak{C}_{1,1}}\varepsilon\\
(b)\quad\frac{\1(x_0)}{\alpha T_0} e^{-{\alpha \tau}}2^{\frac{1+\bea}{2}\lev_\varepsilon }\le {\mathfrak{C}_{1,2}}\varepsilon\\
(c)\quad
 \frac{1}{\alpha T_0}\left(\4^2+
\max(\2^2,\2\4)\pas_0^\bea\log\left(\pas_0^{-1}(\bea-1)^{-2}\wedge R_\varepsilon^2\right)
\right)
\le{\mathfrak{C}_{1,3}}\varepsilon^2,
\end{cases}
\end{equation}
where $\lev_\varepsilon=\lceil \delta^{-1} \log_2(r_0 \varepsilon^{-1})\rceil$. 
Note that we used that under the assumptions, $\alpha\pas_0\le 1$.
For $(a)$, the result is obvious since by construction,
\begin{equation}\label{eq:gestionc11}
\3 \pas_{\lev_\varepsilon}^\delta\le  \3\pas_0^{\delta} 2^{-\log_2( r_0 \varepsilon^{-1})}=:{\mathfrak{C}_{1,1}}\varepsilon\quad\textnormal{with} \quad {\mathfrak{C}_{1,1}}=\3\pas_0^{\delta} r_0^{-1}.
\end{equation}
{For $(c)$, using the elementary inequality $2c_2c_4\le c_2^2+c_4^2$ and the fact that $\sup_{x\in(0,1],\bea\in [1,2]} {x^\bea|\log x|}\le 1$, we remark that
$$
\4^2+
\max(\2^2,\2\4)\pas_0^\bea\log(\pas_0^{-1})\left((\bea-1)^{-2}\wedge R_\varepsilon^2\right)\lesssim_{uc} \max\l(\4^2, \2^2\pas_0^\bea\log(\pas_0^{-1})\right)\left((\bea-1)^{-2}\wedge R_\varepsilon^2\right).$$}
{Then, owing to the definition of $T_0$, we deduce that $(c)$ holds true with
\begin{equation}
{\mathfrak{C}_{1,3}}\lesssim_{uc} \begin{cases}
2 (\bea-1)^{-2} {(\alpha{\Tfrak)}}^{-1}{ \max\l(\4^2, \2^2\pas_0^\bea\log(\pas_0^{-1})\right)}
 &\textnormal{if $\bea>1$}\\
{2{(\alpha{\Tfrak)}^{-1}} \max\l(\4^2, \2^2\pas_0^\bea\log(\pas_0^{-1})\right)} &\textnormal{if $\bea=1$}.
\end{cases}
\label{eq:ccccc}
\end{equation}}
Note that for $\bea=1$, we used that $(\bea-1)^{-2}\wedge {\lev_\varepsilon^2}=R_\varepsilon^2$ and that $T_0=\Tfrak\varepsilon^{-2} R_\varepsilon^2$.
Finally, for $(b)$, first remark that $2^{\frac{1+\bea}{2}\lev_\varepsilon }\le (2^\delta r_0 \varepsilon)^{{\frac{1+b}{2\delta}}}$. Then, if $\tau:=\tau(\varepsilon)\ge \tau_1 |\log(\varepsilon)|$ with $\tau_1\ge0$, we get
{\begin{equation}\label{wskljlkjdskz}
\varepsilon^{-1}\frac{\1(x_0)}{\alpha T_0} e^{-{\alpha \tau}}2^{\frac{1+\bea}{2}R_\varepsilon}\le
 \frac{\1(x_0)}{\alpha\Tfrak} (2^\delta r_0 )^{{\frac{1+b}{2\delta}}}\varepsilon^{1+\alpha\tau_1-\frac{1+\bea}{2\delta}}.
\end{equation}
In the case $\bea=1$, we used that $R_\varepsilon\ge 1$.}
Set $\kappa=1+\alpha\tau_1-\frac{1+\bea}{2\delta}$.  Since $\tau_1>(1+\bea-2\delta)/(2\alpha\delta)$, we have $\kappa>0$. Thus,
$$
\sup_{\varepsilon\in(0,1]}\varepsilon^{-1}\frac{\1(x_0)}{\alpha T_0} e^{-{\alpha \tau}}2^{\frac{1+\bea}{2}R_\varepsilon}\le{ \frac{\1(x_0)}{\alpha\Tfrak} (2^\delta r_0 )^{{\frac{1+\bea}{2\delta}}}} <+\infty.
$$

\noindent This implies that ${\mathfrak{C}_{1,2}}$ is finite as soon as $\tau(\varepsilon)\ge \tau_1 |\log(\varepsilon)|$ for any $\varepsilon\in(0,1]$. This result easily extends to the case where  $\liminf_{\varepsilon\rightarrow0} \frac{\tau(\varepsilon)}{\tau_1|\log(\varepsilon)|}>0$ (with the convention $1/0=+\infty$ if $\tau_1=0$). Thus, the result is still true if $\tau\in[\tau_1|\log \varepsilon|\wedge T_{R_\varepsilon}/2, T_{R_\varepsilon}/2]$. Actually, under $\HTROIS$, one can check 
that $|\log(\varepsilon)|=o(T_{R_\varepsilon})$.

$(ii)$ First, let us remark that under the assumptions of this statement, $\tau\le T_{R_\varepsilon}/2$ for any $\varepsilon\in(0,\varepsilon_0]$.  It now remains to check that ${\mathfrak{C}_{1,1}}$, ${\mathfrak{C}_{1,2}}$ and ${\mathfrak{C}_{1,3}}$ defined in $(i)$ are bounded by universal constants.

For $(a)$, this is obvious by \eqref{eq:gestionc11} (since {${\mathfrak{C}_{1,1}}\le 1$ when $r_0\ge 1\vee c_3\pas_0^\delta$}). For $(c)$, one also remarks that $\Tfrak$ is defined in such a way that ${\mathfrak{C}_{1,3}}$ is bounded by a universal constant. Finally, for $(b)$, one  can check (with a slight adaptation of \eqref{wskljlkjdskz})  that when $\tau\ge \tau_1|\log(\varepsilon)|+\tau_2$ with $\tau_1=(1+\bea-2\delta)/(2\alpha\delta)$ then,
{\begin{equation}\label{eq:lastoneouf}
\varepsilon^{-1}\frac{\1(x_0)}{\alpha T_0} e^{-{\alpha \tau}}2^{\frac{1+\bea}{2}R_\varepsilon}\le
 \frac{\1(x_0)}{\alpha\Tfrak} 2^{{\frac{1+\bea}{2}}} r_0^{\frac{1+\bea}{2\delta}} e^{-\alpha \tau_2}.\end{equation}
}
Thus, ${\mathfrak{C}_{1,2}}$ is bounded by a universal constant if
{$$\tau_2\ge\frac{1}{\alpha}\log\l( \frac{\1(x_0)r_0^{\frac{1+\bea}{2\delta}}}{\alpha\Tfrak}\r).$$}
Now, since $c_1(x_0)\le c_4$ and  $\alpha\Tfrak={\dbea}{\max\l(\4^2, \2^2\pas_0^\bea\log(\pas_0^{-1})\right)}\ge {\dbea} c_4^2$, we can slightly simplify the condition by taking
{$$\tau_2= 0\vee\frac{1}{\alpha}\log\l( r_0^{\frac{1+\bea}{2\delta}}({\dbea}\4)^{-1}\r).$$}
\begin{rem} \label{rem:withnonexplicitconstants} In the sequel, we usually know the constants $c_2$, $c_3$ and $c_4$ up to some universal constants. More precisely, we will build our algorithm with
$\tilde{c}_i=\lambda_i c_i$ where $\lambda_1$, $\lambda_2$ and $\lambda_3$ denote some universal positive constants. A careful reading of the proof shows that with the new parameters $$\tilde{r}_0{\,\ge\,} 1\vee (\tilde{c}_3\gamma_0^\delta),\quad \tilde{R}_\varepsilon=\lceil \delta^{-1}\log_2(\tilde{r}_0 \varepsilon^{-1})\rceil,\quad {\tilde{\Tfrak}{\,\ge\,}  \frac{{\dbea}}{\alpha}\max(\tilde{c}_2^2 \pas_0^\bea\log(\pas_0^{-1}),\tilde{c}_4^2)}$$
and $\tilde{\tau}=\tau_1|\log \varepsilon|+\tilde{\tau}_2$ with,
{$$\tilde{\tau}_2= 0\vee\frac{1}{\alpha}\log\l( \tilde{r}_0^{\frac{1+\bea}{2\delta}}({\dbea}\tilde{c}_4)^{-1}\r),$$}
the conclusion of Proposition \ref{prop:theo1precis}$(ii)$ (and thus of Theorem \ref{maintheo}$(ii)$) is still true with  $\tilde{\mathfrak{C}}_2= {\cbea} \gamma_0^{-1}\tilde{\Tfrak}$ and with a new universal constant $\tilde{\mathfrak{C}}_1$.\\

\noindent For the sake of completeness, let us give some arguments. First,   the fact that $\tilde{\mathfrak{C}}_2= {\cbea} \gamma_0^{-1}\tilde{\Tfrak}$  follows from  \eqref{eq:cost2}. Then, to prove that $\tilde{\mathfrak{C}}_1\lesssim_{uc} 1$, one has to check  that the controls of \eqref{eq:allcontrols} are still true with the new parameters of the algorithm (with some new  universal constants $\tilde{\mathfrak{d}}_{i}$, $i=1,2,3$). For $(a)$, we have
$\3 \pas_{\lev_\varepsilon}^\delta\le \tilde{\mathfrak{d}}_{1}\varepsilon$ with  $\tilde{\mathfrak{d}}_{1}=\3\pas_0^{\delta} \tilde{r}_0^{-1}$ and it is easy to check (considering separately the cases $\tilde{c}_3\gamma_0^\delta\le 1$ and $\tilde{c}_3\gamma_0^\delta\ge 1$) that $\tilde{\mathfrak{d}}_{1}\le \cuniv= \max(\lambda_1,\lambda_1^{-1})$.
For $(c)$, one checks that the formula \eqref{eq:ccccc} is still correct replacing $\Tfrak$ by $\tilde{\Tfrak}$.  If $\bea>1$, this means that $(c)$ holds with 
$${\mathfrak{C}_{1,3}}=2 {\max(\tilde{c}_2^2 \pas_0^\bea\log(\pas_0^{-1}),\tilde{c}_4^2)^{-1}}{\max({c}_2^2 \pas_0^\bea\log(\pas_0^{-1}),{c}_4^2)}\le \cuniv=\frac{2}{\min (\lambda_2^2,\lambda_4^2)}$$
and the same bound occurs with $\bea=1$. Finally, for $(b)$, using that $\1(x_0)\le c_4=\lambda_4^{-1}\tilde{c}_4$ and that {$\alpha\tilde{\Tfrak}\ge {\dbea} \tilde{c}_4^2$}, we can replace Inequality \eqref{eq:lastoneouf} by :
{\begin{equation*}
\varepsilon^{-1}\frac{\1(x_0)}{\alpha \tilde{T}_0} e^{-{\alpha \tilde{\tau}}}2^{\frac{1+\bea}{2}\tilde{R}_\varepsilon}\lesssim_{uc}
 \lambda_4^{-1}  ({\dbea}\tilde{c}_4)^{-1}\tilde{r}_0^{\frac{1+\bea}{2\delta}} e^{-\alpha \tilde{\tau}_2},\end{equation*}}
and the definition of $\tilde{\tau}_2$ is exactly what we need to bound $\tilde{\mathfrak{d}}_2$ by a universal constant. 

\end{rem}

\end{proof}

\section{Proof of the results in the strongly convex setting}\label{sec:maintheo23}
This section is divided into two parts. In the first one, we prove that $\Cs$ leads to  a series of bounds which in turn imply $\HUN$, $\HDEUX$, $\HTROIS$ and $\HQU$. Then, in the second one (Section \ref{sec:resumedespreuves}), we  thus derive our main results from \cref{maintheo}.
\subsection{Contraction/Stability/Confluence bounds under \texorpdfstring{$\Cs$}{Cs} } 
\subsubsection{\texorpdfstring{$\HUN$}{H1} and \texorpdfstring{$\HQU$}{H4} under \texorpdfstring{$\Cs$}{Cs}}
\begin{lem}\label{lem:boundEuler}
Assume $\Cs$ and  $b$ $L$-Lipschitz {with $0<\alpha\le L$}. Let $x^\star\in\ER^d$. Then,\\

\noindent (i) For every  $(\pas,t,x)\in (0,\laminf/(2\blip^2)]\times \ER_+\times\ER^d$,
\begin{equation}\label{eq:lyapdiscret}
 \ES[|\Xge_{t}^{\pas,x}-x^\star|^2]\le |x-x^\star|^2 e^{-\frac{\alpha}{2}t}+|b(x^\star)|^2\l(\frac{1}{\blip^2} + \frac{2}{\laminf^2}\r)+\frac{2\sigma^2 d}{\laminf}.
 \end{equation}
In particular, the Euler scheme with step $\pas$ admits a unique invariant distribution $\pi^\pas$ as soon as $\pas\in(0,\laminf/(2\blip^2)]$\footnote{In fact, looking carefully into the proof, one can check that existence of $\pi^\pas$ may extend to $\pas \in(0,2\laminf/\blip^2]$.} 
$$\sup_{\pas\in \laminf/(2\blip^2)]} \pi^\pas(|.-x^\star|^2)\le 2|b(x^\star)|^2\l(\frac{1}{\blip^2} + \frac{2}{\laminf^2}\r)+\frac{\sigma^2 d}{\laminf}.$$
\noindent (ii) For all $x,y\in\ER^d$, for all $\pas\in(0,\laminf/(2\blip^2)]$, for all $t\ge0$,
$$  \ES[|\bar{X}_{t}^{\pas,x}-\bar{X}_{t}^{\pas,y}|^2]\le |x-y|^2 e^{-\alpha t},$$
and,
$${\cal W}_2(\bar{X}_{{\un{t}}}^{\pas,x},\pi^\pas)\le{\cal W}_2(\delta_x,\pi^\pas)e^{-\laminf {\un{t}}}$$
with,
$${\cal W}_2(\delta_x,\pi^\pas)\le c_1(x):=|x-x^\star|+\sqrt{|b(x^\star)|^2\l(\frac{1}{\blip^2} + \frac{2}{\laminf^2}\r)+\frac{2\sigma^2 d}{\laminf}}.$$

\noindent (iii) As a consequence, setting $\eta_0=\laminf/(2\blip^2)$, $\HUN$ holds with $c_1$ defined above and $\HQU$ holds with 
$c_4^2\lesssim_{uc} {\alpha^{-2}}|b(x_0)|^2+\sigma^2\alpha^{-1}{d}$.
\end{lem}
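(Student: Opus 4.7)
The plan is to handle parts (i)--(iii) in order; (i) and (ii) both rest on the same quadratic Lyapunov computation on the Euler recursion, and (iii) is a direct consequence.

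For (i), I would expand $|\bar{X}_{(n+1)\pas}^{\pas,x}-x^\star|^2$ from the one-step update and then condition on $\mathcal{F}_{n\pas}$. The Brownian increment contributes exactly $\sigma^2 \pas d$ (additive noise); the drift cross-term is handled by writing $b(\bar{X}_{n\pas})=(b(\bar{X}_{n\pas})-b(x^\star))+b(x^\star)$, so that $\Cs$ together with Young's inequality gives $2\pas\langle b(\bar{X}_{n\pas}),\bar{X}_{n\pas}-x^\star\rangle\le -\alpha\pas|\bar{X}_{n\pas}-x^\star|^2+\alpha^{-1}|b(x^\star)|^2\pas$; and the higher-order term $\pas^2|b(\bar{X}_{n\pas})|^2\le 2\pas^2 L^2|\bar{X}_{n\pas}-x^\star|^2+2\pas^2|b(x^\star)|^2$ is absorbed into the contraction precisely when $\pas\le\alpha/(2L^2)$. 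Iterating the resulting affine recursion of the form $u_{n+1}\le(1-\alpha\pas/2)u_n+C\pas$ gives \eqref{eq:lyapdiscret} at discretization times, and the same one-step computation applied on $(\un{t}_\pas,t)$ with $t-\un{t}_\pas\in[0,\pas]$ extends the bound to every $t\ge 0$. Existence of $\pi^\pas$ then follows by Krylov--Bogolyubov from this uniform-in-$n$ second-moment bound; uniqueness comes from the non-degenerate Gaussian noise via a Doeblin/Meyn--Tweedie argument; and the claimed bound on $\pi^\pas(|\cdot-x^\star|^2)$ is obtained by Fatou after letting $n\to\infty$ starting from $\pi^\pas$.

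For (ii), I would rerun the same computation on the synchronous coupling of two Euler schemes driven by the \emph{same} Brownian motion. Additivity of the noise makes the Brownian term cancel in $Z_{n\pas}:=\bar{X}_{n\pas}^{\pas,x}-\bar{X}_{n\pas}^{\pas,y}$, and expanding $|Z_{(n+1)\pas}|^2$, using $\Cs$ for the cross term and the $L$-Lipschitz bound for the square term, yields $|Z_{(n+1)\pas}|^2\le (1-2\alpha\pas+L^2\pas^2)|Z_{n\pas}|^2\le e^{-\alpha\pas}|Z_{n\pas}|^2$ whenever $\pas\le\alpha/(2L^2)$. Interpolating on $(\un{t}_\pas,t)$ in the same way extends the contraction to arbitrary $t\ge 0$. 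The Wasserstein bound then follows the usual pattern: couple $\delta_x$ with $\pi^\pas$ as $(x,Y_0)$ (close to optimal) and drive both schemes with the same Brownian motion; by $\pi^\pas$-invariance of the Euler kernel, $\bar{X}_{\un{t}}^{\pas,Y_0}$ is again $\pi^\pas$-distributed, so the pathwise contraction gives $\mathcal{W}_2(\bar{X}_{\un{t}}^{\pas,x},\pi^\pas)^2\le e^{-\alpha\un{t}}\mathbb{E}[|x-Y_0|^2]$, and minimizing over couplings together with the elementary $\mathcal{W}_2(\delta_x,\pi^\pas)^2\le 2|x-x^\star|^2+2\pi^\pas(|\cdot-x^\star|^2)$ (controlled via (i)) yields the announced form of $c_1(x)$.

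Part (iii) is then an immediate verification. For $\HUN$, the Kantorovich--Rubinstein duality $|\mathbb{E}_x f(\bar{X}^{\pas,x}_{\un{t}_\pas})-\pi^\pas(f)|\le [f]_1\mathcal{W}_1\le [f]_1\mathcal{W}_2$ combined with (ii) produces exactly $c_1(x)[f]_1 e^{-\alpha\un{t}_\pas}$. For $\HQU$, I would apply (i) with $x^\star=x_0$, which bounds $\|\bar{X}_t^{\pas,x_0}-x_0\|_2^2$ by $\alpha^{-2}|b(x_0)|^2+\sigma^2\alpha^{-1}d$ up to universal constants; since $c_1$ is affine in $|\cdot -x^\star|$, the same bound (with a different universal constant) controls $\|c_1(\bar{X}_t^{\pas,x_0})\|_2$, yielding the announced $c_4$. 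The main technical point, more bookkeeping than genuine difficulty, is to tune the Young constant and the quadratic bound on $|b(x)|^2$ so that \emph{exactly} the threshold $\pas\le\alpha/(2L^2)$ makes the one-step coefficient at most $1-\alpha\pas/2$ in the Lyapunov argument and $e^{-\alpha\pas}$ in the coupling argument, and then to rearrange the prefactor $|b(x^\star)|^2(L^{-2}+\alpha^{-2})$ coming out of (i) into the cleaner $\alpha^{-2}|b(x^\star)|^2$ used in Proposition \ref{prop:stronconvbea1} thanks to the assumption $\alpha\le L$.
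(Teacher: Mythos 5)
Your proposal follows essentially the same route as the paper: the one-step quadratic expansion conditioned on the past, with $\Cs$ plus a Young inequality for the cross term and the Lipschitz bound for the $\pas^2$-drift term, a discrete Gronwall iteration extended to all $t$ by the in-step interpolation, existence/uniqueness of $\pi^\pas$ from the uniform moment bound and non-degeneracy, the synchronous-coupling pathwise contraction together with invariance of $\pi^\pas$ for the Wasserstein bound, and part (iii) as a direct consequence (the paper leaves the Kantorovich--Rubinstein step implicit). The only caveat is the constant bookkeeping you already flag: to recover the displayed bound (coefficient $1-\alpha\pas/2$ exactly at the threshold $\pas\le\alpha/(2L^2)$ and the prefactor $|b(x^\star)|^2\bigl(L^{-2}+2\alpha^{-2}\bigr)$) one should split the quadratic term as $|b(z)-b(x^\star)|^2\le L^2|z-x^\star|^2$ plus the $b(x^\star)$ contribution, as the paper does, rather than the cruder $2L^2|z-x^\star|^2+2|b(x^\star)|^2$, which at the same threshold only yields a coefficient $\le 1$ unless the Young constant is retuned, and then costs a harmless factor $2$ in the additive constants (likewise, Minkowski rather than $(a+b)^2\le 2a^2+2b^2$ gives the exact form of $c_1(x)$).
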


\begin{rem} 
 Let us remark that the $L^2$-bounds of $(ii)$ rely on pathwise controls of the Euler schemes. Furthermore, note that if $b(x_0)=0$, the dependence {on} $\alpha$ is improved.  This is of interest in the case where $b=-\nabla U$ and $U$ has a  minimum (unique under $\Cs$) which is known.
\end{rem}
\begin{proof}
$(i)$ Let $(\bar{X}_t^x)_{t\ge0}$ denote the Euler scheme with step $\pas$ starting from $x$. Let $\un{t}=\pas\max\{k\in\mathbb{N}, k\pas\le t\}$ and set $\eta_t=t-\un{t}$. For any $t\ge0$, we have
$$ \bar{X}_t^x-x^\star=\bar{X}_{\un{t}}^x-x^\star+\eta_t b(\bar{X}_{\un{t}}^x)+\sigma (B_t-B_{\un{t}}).$$
Using that the Brownian motion has centered and independent increments, we have for every $t\ge0$,
$$\ES[|\bar{X}_t^x-x^\star|^2]=\ES[|\bar{X}_{\un{t}}^x-x^\star|^2]+2\eta_t \ES[\langle \bar{X}_{\un{t}}^x-x^\star,b(\bar{X}_{\un{t}}^x)\rangle]+\eta_t^2\ES[|b(\bar{X}_{\un{t}}^x)|^2]+\eta_t\sigma^2 d.$$
Adding and substracting $2 \eta_t \ES[ \langle \bar{X}_{\un{t}}^x, b(x^\star) \rangle] $ in the preceding equality and $b(x^\star)$ in $\ES[|b(\bar{X}_{{\un{t}}}^x)|^2]$ we get 
\begin{align*}\ES[|\bar{X}_{t}^x-x^\star|^2]&\le  \ES[|\bar{X}_{{\un{t}}}^x-x^\star|^2]+2\eta_t \ES[\langle \bar{X}_{{\un{t}}}^x-x^\star,b(\bar{X}_{{\un{t}}}^x)-b(x^\star)\rangle]
\\ & +\eta_t^2\ES[|b(\bar{X}_{{\un{t}}}^x)-b(x^\star)|^2]+\eta_t^2|b(x^\star)|^2 +2 \eta_t \ES[\langle \bar{X}_{{\un{t}}}^x-x^\star, b(x^\star) \rangle] + \eta_t\sigma^2 d.
\end{align*}
Using that $b$ is $L$-Lipschitz, Assumption $\Cs$ and  the elementary inequality $\langle u,v\rangle\le (2\laminf)^{-1}|u|^2+(\laminf/2)|v|^2$ (with $u=b(x^\star)$ and $v=\bar{X}_{{\un{t}}}^x-x^\star$), this yields:
\begin{equation}\label{eq:controlforanyt}
\ES[|\bar{X}_{t}^x-x^\star|^2]\le \ES[|\bar{X}_{{\un{t}}}^x-x^\star|^2]\l(1-\laminf\eta_t+\eta_t^2 \blip^2\r)+\eta_t \l(|b(x^\star)|^2(\eta_t + \laminf^{-1})+\sigma^2 d\r).
\end{equation}
If $\pas\in(0,\laminf/(2\blip^2)]$, then,  $1-\laminf\pas+\pas^2 \blip^2\le 1-\frac{1}{2}\laminf \pas$. Hence, setting $u_k=\ES[|\bar{X}_{k\pas}^x-x^\star|^2]$, we get
$$ u_{k+1}\le u_k\l(1-\frac{\laminf \pas}{2}\r)+\pas \l(|b(x^\star)|^2\l(\frac{\laminf}{2\blip^2} + \frac{1}{\laminf}\r)+\sigma^2 d\r),$$
and an induction leads to 
$$ u_k\le |x-x^\star|^2\l(1-\frac{\alpha\pas}{2}\r)^k+\frac{2}{\alpha} \l(|b(x^\star)|^2\l(\frac{\laminf}{2\blip^2} + \frac{1}{\laminf}\r)+\sigma^2 d\r).$$
Then, Inequality \eqref{eq:lyapdiscret}
 follows for $t=k\pas$ by using that $1-x\le e^{-x}$ for $x\ge0$, and extends to any $t\ge0$ by \eqref{eq:controlforanyt}.\\
 
 \noindent Inequality \eqref{eq:lyapdiscret} implies in particular that $\sup_{t\ge0} \ES[|\bar{X}_t-x^\star|^2]<+\infty$, which  in turn classically  ensures the existence of $\pi^\pas$ and the fact that $\pi^\pas(|.-x^\star|^2)\le \limsup_{t\rightarrow+\infty}<\ES[|\bar{X}_t-x^\star|^2]$. Uniqueness is obvious since the diffusion is not degenerated. 

\noindent $(ii)$ With the same notations as in $(i)$,
$$\bar{X}_{t}^x-\bar{X}_{t}^y=\bar{X}_{\un{t}}^x-\bar{X}_{\un{t}}^y+\eta_t (b(\bar{X}_{\un{t}}^x)-b(\bar{X}_{\un{t}}^y)).$$
Expanding the square of the right-hand member and using Assumption $\Cs$, this yields:
\begin{align*}
|\bar{X}_{t}^x-\bar{X}_{t}^y|^2\le |\bar{X}_{{\un{t}}}^x-\bar{X}_{{\un{t}}}^y|^2(1-2\laminf\eta_t)+\eta_t^2|b(\bar{X}_{{\un{t}}}^x)-b(\bar{X}_{{\un{t}}}^y)|^2.
\end{align*}  
Since $b$ is a Lipschitz continuous function, we deduce that
$$|\bar{X}_{t}^x-\bar{X}_{t}^y|^2\le |\bar{X}_{{\un{t}}}^x-\bar{X}_{{\un{t}}}^y|^2 \l(1-2\laminf\eta_t+\eta_t^2 \blip^2 \r).$$
Since $\pas\le \laminf/\blip^2$, we have $1-2\laminf\eta_t+\eta_t^2 \blip^2\le 1-\laminf\eta_t$ for any $t\ge0$. The first inequality thus follows by induction and by the inequality $1-x\le e^{-x}$ for $x\ge0$.\\

\noindent Let us consider the second inequality of $(ii)$: by the invariance of the distribution $\pi^\pas$ and  the definition of ${\cal W}_2$, we have
$${\cal W}_2({\cal L}(\bar{X}_{\un{t}}^{\pas,x}),\pi^\pas)\le \sqrt{\ES_{Y_0\sim\pi^{\pas}}[|\bar{X}_{\un{t}}^{\pas,x}-\bar{X}_{\un{t}}^{\pas,Y_0}|^2]}=\sqrt{\int \ES[|\bar{X}_{\un{t}}^{\pas,x}-\bar{X}_{\un{t}}^{\pas,y}|^2]\pi^\pas(dy)},$$
and the result follows from the previous bound.  Finally, for the last inequality of $(ii)$, one uses Minkowski inequality to obtain:
$${\cal W}_2(\delta_x,\pi^\pas)\le |x-x^\star|+\sqrt{\int |y-x^\star|^2\pi^\pas(dy)},$$
but by $(i)$ and the convergence in distribution of the Euler scheme towards  $\pi^\pas$,
$$\int |y-x^\star|^2\pi^\pas(dy)\le \limsup_{k\rightarrow+\infty}\ES[|\bar{X}_{\un{t}}^{x}-x^\star|^2]\le |b(x^\star)|^2\l(\frac{1}{\blip^2} + \frac{2}{\laminf^2}\r)+\frac{2\sigma^2 d}{\laminf}.$$

\noindent $(iii)$ This is a direct consequence of $(i)$ and $(ii)$, applied with $x^\star=x_0$ {and using that $\alpha\le L$}.
\end{proof}

\subsubsection{Proof of \texorpdfstring{$\HDEUX$}{H2} and \texorpdfstring{$\HTROIS$}{H3}}
In view of $\HDEUX$, we begin with  a fundamental ``one-step'' lemma where we consider the error between the diffusion and its discretization on one step only.  To this end, we consider for  $x,y \in \ER^d$ the couple $(\X_t^x,\Xge_t^y)_{t\ge0}$ defined by
$$
\begin{cases}
&\X_t^x=x+\int_0^t b(\X_s^x) ds+\sigma \B_t\\
&\Xge_t^y=y+t b(y)+\sigma \B_t.
\end{cases}
$$

\begin{lem} \label{prop:onestep}  Let $\pas>0$. 

\noindent \textit{(i)}
$$
\E[|\X_\pas^x-\Xge_\pas^y|^2]\le |x-y|^2 e^{-\laminf \pas}+\frac{\pas^2L^2}{{\laminf}}\left({\pas}  |b(y)|^2+{\sigma^2  d}\right).
$$

\noindent \textit{(ii)}
$$\E[|\X_\pas^x-\Xge_\pas^y|^2]\le |x-y|^2 e^{-\laminf \pas}+c_\pas(x,y)\pas^3.$$
with 
$$ c_\pas (x,y)= {\frac{2}{3}}\left(\frac{ \blip^2}{\laminf} |b(y)|^2+\frac{\sigma^4}{\laminf}   \|\Delta b\|_{2,\infty}^2
+ \sigma  \blip \|\nabla b\|_{2,\infty} \left(\sqrt{\pas} S(x,\pas) +\sigma  \sqrt{ d}\right)\right),$$
where $\|\nabla b\|_{2,\infty} $ and  $\|\Delta b\|_{2,\infty}$ are defined by \eqref{def:jaclap} and $S(x,\pas )=\sup_{u\in[0,\pas]} \E[|b(\X_u^x)|^2] ^{\frac{1}{2}}$.


\end{lem}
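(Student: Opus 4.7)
I set $Z_t := \X_t^x - \Xge_t^y$. Because both processes share the same Brownian motion, the martingale parts cancel and $Z$ is pathwise $C^1$ with
$$\frac{d}{dt}|Z_t|^2 = 2\langle Z_t, b(\X_t^x) - b(y)\rangle.$$
I split the drift difference as $b(\X_t^x) - b(y) = (b(\X_t^x) - b(\Xge_t^y)) + (b(\Xge_t^y) - b(y))$: assumption $\Cs$ gives $\langle Z_t, b(\X_t^x) - b(\Xge_t^y)\rangle \le -\alpha|Z_t|^2$, while Lipschitzness of $b$ with Young's inequality $2ab \le \alpha a^2 + b^2/\alpha$ provides $2\langle Z_t, b(\Xge_t^y) - b(y)\rangle \le \alpha|Z_t|^2 + L^2\alpha^{-1}|\Xge_t^y - y|^2$. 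Combining yields the scalar inequality $\frac{d}{dt}|Z_t|^2 \le -\alpha|Z_t|^2 + L^2\alpha^{-1}|\Xge_t^y - y|^2$; taking expectations with $\mathbb{E}|\Xge_t^y - y|^2 = t^2|b(y)|^2 + \sigma^2 t d$ and applying Gronwall together with the crude bound $e^{-\alpha(\gamma-s)} \le 1$ gives (i).

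\textbf{Part (ii).} To upgrade to the $\gamma^3$ rate I must extract an extra factor of $t$ from the $\sigma B_t$ contribution to $\Xge_t^y - y$, which the naive Young inequality in (i) discards (it only yields a $\sigma^2 t d$ piece and hence a $\gamma^2$ remainder). Since $\Xge_\cdot^y$ is an It\^o process with constant drift $b(y)$ and diffusion $\sigma I_d$, It\^o's formula applied componentwise to $b$ yields
$$b(\Xge_t^y) - b(y) = \int_0^t \nabla b(\Xge_s^y) b(y)\, ds + \sigma\int_0^t \nabla b(\Xge_s^y)\, dB_s + \frac{\sigma^2}{2}\int_0^t \Delta b(\Xge_s^y)\, ds.$$
The two Riemann integrals contribute, via Cauchy--Schwarz together with $\|\nabla b(\cdot)\|_F \le \|\nabla b\|_{2,\infty}$ and $|\Delta b(\cdot)| \le \|\Delta b\|_{2,\infty}$, bounds of order $t\|\nabla b\|_{2,\infty}|b(y)|$ and $t\sigma^2\|\Delta b\|_{2,\infty}$ respectively, accounting (after a final Young step with parameter $\alpha$) for the $L^2|b(y)|^2/\alpha$ and $\sigma^4\|\Delta b\|_{2,\infty}^2/\alpha$ terms in $c_\gamma(x,y)$.

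\textbf{Martingale piece and Gronwall.} The main obstacle is the stochastic integral $M_t := \sigma\int_0^t \nabla b(\Xge_s^y)\,dB_s$, for which a pointwise estimate would squander the martingale structure. Since $Z$ has no martingale component, It\^o's product formula eliminates the quadratic covariation, and upon taking expectations the pure stochastic integral vanishes, giving
$$\mathbb{E}\langle Z_t, M_t\rangle = \int_0^t \mathbb{E}\langle M_s, b(\X_s^x) - b(y)\rangle\,ds.$$
I control $\|M_s\|_2 \le \sigma\sqrt{s}\,\|\nabla b\|_{2,\infty}$ via the It\^o isometry, and $\|b(\X_s^x) - b(y)\|_2$ via Lipschitzness of $b$ together with moment estimates on $\X^x$ expressed through $S(x,\gamma)$ and $\sigma\sqrt{d}$; these produce exactly the $\sigma L\|\nabla b\|_{2,\infty}(\sqrt{\gamma}S(x,\gamma) + \sigma\sqrt{d})$ contribution to $c_\gamma(x,y)$. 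Plugging everything into $\frac{d}{dt}\mathbb{E}|Z_t|^2 \le -2\alpha\,\mathbb{E}|Z_t|^2 + 2\,\mathbb{E}\langle Z_t, b(\Xge_t^y)-b(y)\rangle$, absorbing any residual $\mathbb{E}|Z_t|^2$ using the spare $\alpha$ factor, and integrating by Gronwall with $e^{-\alpha(\gamma-s)}\le 1$ completes the proof. The delicate part is the bookkeeping required to match the exact form of $c_\gamma(x,y)$ stated in the lemma, in particular keeping every $|x-y|$-dependence strictly inside the $|x-y|^2 e^{-\alpha\gamma}$ term.
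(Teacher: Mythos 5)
Your part (i) is correct and follows essentially the paper's own route: differentiate $\E[|Z_t|^2]$ with $Z_t=\X_t^x-\Xge_t^y$ (finite variation since the Brownian parts cancel), use $\Cs$ on $b(\X_t^x)-b(\Xge_t^y)$, Young's inequality on the remainder, and Gronwall; the only cosmetic difference is that you treat $\E[|\Xge_t^y-y|^2]=t^2|b(y)|^2+\sigma^2 t d$ in one stroke where the paper splits off $t\,b(y)$ and $\sigma B_t$ separately.

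In part (ii), however, two steps do not hold as written. First, applying It\^o along $\Xge^y$ produces the drift term $\int_0^t \nabla b(\Xge_s^y)b(y)\,ds$, which you bound by $t\,\|\nabla b\|_{2,\infty}|b(y)|$ and claim accounts, after Young, for the $L^2|b(y)|^2/\alpha$ term of $c_\gamma$; it actually yields $\|\nabla b\|_{2,\infty}^2|b(y)|^2/\alpha$, and $\|\nabla b\|_{2,\infty}$ (a supremum of Frobenius norms) is not controlled by $L$ — it can be of order $L\sqrt{d}$ (take $b=-\mathrm{Id}$) — so the stated $c_\gamma(x,y)$ is not obtained; this extra factor $d$ would also spoil the dimension dependence the lemma is used for. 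The fix is either to use the operator-norm bound $|\nabla b(z)v|\le L|v|$, or, as the paper does, to peel off $b(\Xge_t^y)-b(y+\sigma B_t)$ by plain Lipschitzness ($\le L t|b(y)|$) and apply It\^o only to $b(y+\sigma B_\cdot)$, so that no drift term appears at all. Second, and more seriously, in the martingale piece you estimate $\E\langle M_s, b(\X_s^x)-b(y)\rangle$ by $\|M_s\|_2\,\|b(\X_s^x)-b(y)\|_2$ with the latter ``expressed through $S(x,\gamma)$ and $\sigma\sqrt d$''; this is not possible, since $\|b(\X_s^x)-b(y)\|_2\le L\bigl(\|\X_s^x-x\|_2+|x-y|\bigr)$ carries an $|x-y|$ contribution of order one. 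A direct Cauchy--Schwarz then leaves a term of size $\sigma L\|\nabla b\|_{2,\infty}|x-y|\sqrt{s}$, i.e.\ an overall $O(\gamma^{5/2})$ remainder whose constant depends on $|x-y|$ — precisely what the statement excludes, since $c_\gamma(x,y)$ contains no $|x-y|$ and the only $|x-y|^2$ allowed has coefficient $e^{-\alpha\gamma}$. The missing idea is the paper's second use of the martingale property: because $\E[M_s]=0$ and $b(x)-b(y)$ is deterministic, $\E\langle M_s, b(\X_s^x)-b(y)\rangle=\E\langle M_s, b(\X_s^x)-b(x)\rangle$, and only after this recentering does Cauchy--Schwarz with $\|\X_s^x-x\|_2\le s\,S(x,\gamma)+\sigma\sqrt{sd}$ give the $O(s)$ integrand needed for the $\gamma^3$ rate. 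Your closing remark that the $|x-y|$ bookkeeping is delicate points at exactly this issue but does not resolve it; with the recentering inserted and the drift term handled as above, your argument does coincide with the paper's.
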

\begin{proof} Set
$$ F_{x,y}(t)=\frac{1}{2}\E[|\X_t^x-\Xge_t^y|^2].$$
By the Lebesgue differentiability theorem, 
\begin{align}
 F'_{x,y}(t)&=\E\left[\langle \X_t^x-\Xge_t^y, {{b}}(\X_t^x)-{{b}}(y)\rangle\right]\\
 &=\E\left[\langle \X_t^x-\Xge_t^y, {{b}}(\X_t^x)-{{b}}(\Xge_t^y)\rangle\right]+\E\left[\langle \X_t^x-\Xge_t^y, {{b}}(\Xge_t^y)-{{b}}(y)\rangle\right]\nonumber \\
 &\le -{2\laminf} F_{x,y}(t)+\E\left[\langle \X_t^x-\Xge_t^y, {{b}}(\Xge_t^y)-{{b}}(y)\rangle\right],\label{eq:basiceq}
\end{align}
where in the last line, we used $\Cs$. The sequel of the proof is then dedicated to the second part of the last line. To this end, we write
\begin{align}
\E\left[\langle \X_t^x-\Xge_t^y, {{b}}(\Xge_t^y)-{{b}}(y)\rangle\right]&= \E\left[\langle \X_t^x-\Xge_t^y, {{b}}(\Xge_t^y)-b(y+\sigma \B_t)\rangle\right]\label{eq:ggh}\\
&+\E\left[\langle \X_t^x-\Xge_t^y,b(y+\sigma \B_t)-b(y)\rangle\right].\label{eq:ggh2}
\end{align}
For the right-hand side of \eqref{eq:ggh}, we use the elementary inequality, ${|uv|\le \frac{\laminf}{4}|u|^2+\frac{1}{\laminf}|v|^2}$ to obtain
\begin{equation}\label{eq:part111}
\E\left[\langle \X_t^x-\Xge_t^y, {{b}}(\Xge_t^y)-b(y+\sigma \B_t)\rangle\right]\le \frac{\laminf}{2} F_{x,y}(t)+\frac{t^2}{{\laminf}} \blip^2 |b(y)|^2.
\end{equation}
Let us now focus on \eqref{eq:ggh2}. \\

\noindent \textbf{First inequality}: To deduce $(i)$, we use the same inequality as above which yields
$$\E\left[\langle \X_t^x-\Xge_t^y,b(y+\sigma \B_t)-b(y)\rangle\right]\le \frac{\laminf}{2} F_{x,y}(t)+\frac{\sigma^2 L^2 t d}{{\laminf}}.$$
Then, plugging it into \eqref{eq:basiceq} together with \eqref{eq:part111} yields:
$$ F'_{x,y}(t)\le  - \laminf F_{x,y}(t)+\frac{t^2}{{\laminf}} \blip^2 |b(y)|^2+\frac{\sigma^2 L^2 t d}{{\laminf}}.$$
A standard Gronwall-type argument then leads to
$$ F_{x,y}(t)\le F_{x,y}(0)e^{-\alpha t}+\int_0^t\left(\frac{s^2}{{\laminf}} \blip^2 |b(y)|^2+\frac{\sigma^2L^2 s d}{{\laminf}}\right) e^{\alpha (s-t)} ds$$
and the result follows easily by using that for $r>-1$, $\int_0^t s^r e^{\alpha (s-t)}  ds \le \frac{t^{r+1}}{r+1}$ and by setting $t=\gamma$.\\

\noindent \textbf{Second inequality}: For $(ii)$, we need to give a sharper bound of \eqref{eq:ggh2}. To this end, we again apply ItÃ´ formula to $b(y+\sigma \B_t)-b(y)$:   writing $b=(b_1,\ldots, b_d)$, we have  for each $i\in\{1,\ldots,d\}$,
$$ b_i(y+\sigma \B_t)-b_i(y)=\sigma^2 \int_0^t \Delta b_i (y+\sigma B_s) ds+\sigma\int_0^t \langle\nabla b_i(y+\sigma B_s),dB_s\rangle.$$
On the one hand, setting $\Delta b=(\Delta b_i)_{i=1}^d$,
$$\E\left[\langle \X_t^x-\Xge_t^y,\sigma^2 \int_0^t \Delta b (y+\sigma B_s) ds\rangle \right]\le \frac{\laminf}{2} F_{x,y}(t)+  \frac{\sigma^4}{{\laminf}}  t^2 \|\Delta b\|_{2,\infty}^2,$$
where 
$$ \|\Delta b\|_{2,\infty}^2=\sup_{x\in\ER^d} \sum_{i=1}^d |\Delta  b_i(x)|^2.$$
On the other hand, setting ${\cal M}_t=\int_0^t \langle \nabla b(y+\sigma B_s), dB_s\rangle$ (with $\nabla b=(\nabla b_1,\ldots,\nabla b_d)^T$) and using that ${\cal M}$ is a martingale, we get
\begin{align*}
\E\left[\l\langle \X_t^x-\Xge_t^y,\sigma\int_0^t \langle \nabla  b(y+\sigma B_s), dB_s\rangle\r\rangle\right]&=0+
\sigma\E[ \langle \int_0^t b(\X_s^x)-b(y) ds, {\cal M}_t\rangle]\\
&=\sigma\int_0^t \E[\langle b(\X_s^x)-b(y), {\cal M}_s\rangle] ds.
\end{align*}
Again by the martingale property,
\begin{align*}
\E[\langle b(\X_s^x)-b(y), {\cal M}_s\rangle]&=\langle b(x)-b(y),\E[{\cal M}_s]\rangle+\E[\langle b(\X_s^x)-b(x), {\cal M}_s\rangle]\\
&= \E[\langle b(\X_s^x)-b(x), {\cal M}_s\rangle]
\end{align*}
so that by Cauchy-Schwarz inequality,
\begin{equation}\label{eq:begal2galere}
|\E[\langle b(\X_s^x)-b(y), {\cal M}_s\rangle]| = |\E[\langle b(\X_s^x)-b(x), {\cal M}_s\rangle]|\le \blip\E[|\X_s^x-x|^2]^{\frac{1}{2}}
\E[|{\cal M}_s|^2]^{\frac{1}{2}}.
\end{equation}
But, by Minkowski and Jensen inequalities,
\begin{align*}
\E[|\X_s^x-x|^2]^{\frac{1}{2}}&\le \E[|\int_0^s b(\X_u^x) du|^2]^{\frac{1}{2}}+ \sigma \E[|\B_s|^2]^{\frac{1}{2}}\\
&\le s\sup_{u\in[0,\pas]} \E[|b(\X_u^x)|^2]^{\frac{1}{2}}  +\sigma \sqrt{s d}
\end{align*}
and for the martingale term,
$$\E[ |{\cal M}_s|^2]=\int_0^s \E[\|\nabla b(y+\sigma \B_u)\|_F^2] du$$
where for a matrix $A$, $\|A\|_F$ denotes the Frobenius norm defined by $\|A\|_F=\sum_{i,j}|A_{i,j}|^2.$
Thus,
$$\E[ |{\cal M}_s|^2]^{\frac{1}{2}} \le \|\nabla b\|_{2,\infty}\sqrt{s},  $$
where 
$$\|\nabla b \|_{2,\infty}=\sup_{x\in\ER^d}\sqrt{\left( \sum_{i=1}^d |\nabla b_i(x) |^2\right)}=\sup_{x\in\ER^d}\|\nabla b(x)\|_F$$

\noindent Thus,  we deduce from what precedes and from \eqref{eq:begal2galere} that
$$|\E[\langle b(\X_s^x)-b(y), {\cal M}_s\rangle]|\le \blip \|\nabla b\|_{2,\infty} \left(s^{\frac{3}{2}} S(x,\pas) +\sigma s \sqrt{ d}\right),$$
where $S(x,\pas )=\sup_{u\in[0,\pas]} \E[|b(\X_u^x)|^2] ^{\frac{1}{2}}$.\\

\noindent Finally, from what precedes, we deduce that
$$ F'_{x,y}(t)\le  - \laminf F_{x,y}(t)+\frac{t^2}{{\alpha}}\left(\blip^2 |b(y)|^2+{\sigma^4}   \|\Delta b\|_{2,\infty}^2
+ \sigma \alpha \blip \|\nabla b\|_{2,\infty} \left(\sqrt{t} S(x,\pas) +\sigma  \sqrt{ d}\right)\right).
$$
A standard Gronwall argument then leads to the result. 
\end{proof}

We now iterate the one-step inequalities of Lemma \ref{prop:onestep}. For a given  $(\mathcal{F}_t)_{t\ge0}$-Brownian motion, we consider the couple $\l(\X_t^x,\Xge_t^{\pas,x} \r)_{t\ge 0}$ defined by 
\begin{equation}\label{eq:couple}
\begin{cases}
&\X_t^x=x+\int_0^t b(\X_s^x) ds+\sigma \B_t\\
&\Xge_t^{\pas,x}=x+ \int_0^t b(\Xge_{\underline{s}}^{\pas,x})ds+\sigma B_{t}.
\end{cases}
\end{equation}
\begin{prop}\label{prop:L2error} Assume  $\Cs$ and that $b$ is $L$-Lipschitz {with $0<\alpha\le L$}. Let $ x^\star \in \ER^d$ and $\pas\in(0,\frac{\laminf}{2\blip^2}\wedge 1)$. Then for every $n \ge 0$ and $x\in\ER^d$,\\

\noindent $(i)$
\begin{equation*}
\l\| \X_{n\pas}^x - \Xge_{n\pas}^{\pas,x} \r\|_2^2 \le \beta_1(x)\pas,
\end{equation*}
with
$$ \beta_1(x)= {\frac{\blip^2}\alpha} |x-x^\star|^2+
{2} |b(x^\star)|^2\l(\frac{1}{\alpha}+ \frac{\blip^2}{\laminf^3}\r)+\frac{{3}\blip^2\sigma^2 d}{\laminf^2}.$$

\noindent $(ii)$ For any $n\ge0$, for every $x\in\ER^d$,
\begin{equation*}
\l\| \X_{n\pas}^x - \Xge_{n\pas}^{\pas,x} \r\|_2^2 \le \beta_2(x)\pas^2,
\end{equation*}
with 
$$ \frac{3}{2}\beta_2(x)=   \frac{2L^4}{\alpha^2}|x-x^\star|^2+\mathfrak{a}_0 |b(x^\star)|^2
+\sigma \frac{L}{\sqrt{\alpha}}\|\nabla b\|_{2,\infty}|x-x^\star|+\mathfrak{a}_1 |b(x^\star)|+\mathfrak{a}_2 d,$$
where
\begin{align*}
& \mathfrak{a}_0={4}\left(\frac{L^2}{\alpha^2}+\frac{L^4}{\alpha^4}\right),\quad 
\mathfrak{a}_1=\sigma ({\alpha}^{-\frac 12}+{\alpha^{-\frac 32}L)}\|\nabla b\|_{2,\infty},\\
&\mathfrak{a}_2=\frac{\sigma^4}{\alpha^2}\|\Delta b\|_{2,\infty}^2 d^{-1}+L\|\nabla b\|_{2,\infty} \sigma^2 d^{-\frac{1}{2}}({\alpha}^{-\frac{1}{2}}+\alpha^{-1})+\frac{2 L^4\sigma^2 }{\alpha^3}.
\end{align*}

\noindent (iii) As a consequence, $\HDEUX$ holds with 
\begin{equation}
\begin{cases}
\bea=1\textnormal{ and } \2^2\lesssim_{uc} {\blip^2\alpha^{-3}} |b(x_0)|^2+L^2\sigma^2\alpha^{-2}d\\
\bea=2\textnormal{ and } \2^2\lesssim_{uc} {(L\alpha^{-1})^4}|b(x_0)|^2+\mathfrak{a}_1|b(x_0)|+\mathfrak{a}_2 d.
\end{cases}
\end{equation}
\end{prop}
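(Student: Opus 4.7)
The strategy is to iterate the one-step estimates of Lemma \ref{prop:onestep}. Setting $u_n := \|X_{n\pas}^x - \bar{X}_{n\pas}^{\pas,x}\|_2^2$ with $u_0=0$, I will exploit the fact that, conditionally on $\mathcal{F}_{n\pas}$, the Brownian increment $B_{(n+1)\pas}-B_{n\pas}$ is independent of the past; applying Lemma \ref{prop:onestep} on $[n\pas,(n+1)\pas]$ with the random initial data $(X_{n\pas}^x,\bar{X}_{n\pas}^{\pas,x})$ and taking total expectation yields a scalar recursion
$$u_{n+1}\le u_n e^{-\alpha\pas}+\Delta_n,$$
where $\Delta_n$ is the expected one-step error. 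Iterating from $u_0=0$ and summing the geometric series via $1-e^{-\alpha\pas}\ge \alpha\pas/2$ (valid because $\pas\le \alpha/(2L^2)\le 1/(2\alpha)$ under $\alpha\le L$) gives the master inequality $u_n\le \tfrac{2}{\alpha\pas}\sup_k\Delta_k$.

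For (i), Lemma \ref{prop:onestep}(i) yields $\Delta_n=\tfrac{\pas^2L^2}{\alpha}\bigl(\pas\,\E[|b(\bar{X}_{n\pas}^{\pas,x})|^2]+\sigma^2d\bigr)$. The Lipschitz inequality $|b(y)|^2\le 2|b(x^\star)|^2+2L^2|y-x^\star|^2$ combined with the uniform $L^2$-moment bound of Lemma \ref{lem:boundEuler}(i) controls $\sup_n\E[|b(\bar{X}_{n\pas}^{\pas,x})|^2]$ in terms of $|x-x^\star|^2$, $|b(x^\star)|^2$ and $\sigma^2 d$. Plugging this into the master inequality and using the step constraint $\pas L^2\le\alpha/2$ to absorb the $\pas^3$ contribution into an overall $\pas$-term produces the announced $\beta_1(x)\pas$. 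For (ii), Lemma \ref{prop:onestep}(ii) gives $\Delta_n=\pas^3\,\E[c_\pas(X_{n\pas}^x,\bar{X}_{n\pas}^{\pas,x})]$; besides the quantities already controlled, this requires a uniform bound on $\E[S(X_{n\pas}^x,\pas)^2]$. An It\^o expansion on the genuine diffusion together with the dissipativity provided by $\Cs$ gives a uniform $L^2$-bound on $X_t^x$ analogous to the Euler bound of Lemma \ref{lem:boundEuler}(i), whence a uniform control of $\E[|b(X_u^{X_{n\pas}^x})|^2]$ for $u\in[0,\pas]$. Summing the recursion with these ingredients yields $u_n\le \beta_2(x)\pas^2$ with the claimed constants.

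For (iii), I apply (i), respectively (ii), to the two couples $(X^{x_0},\bar{X}^{\pas,x_0})$ and $(X^{x_0},\bar{X}^{\pas/2,x_0})$ driven by the \emph{same} Brownian motion. Since $\underline{t}_\pas=n\pas$ is automatically a multiple of $\pas/2$, the triangle inequality gives
$$\bigl\|\bar{X}_{\underline{t}_\pas}^{\pas,x_0}-\bar{X}_{\underline{t}_\pas}^{\pas/2,x_0}\bigr\|_2\le \bigl\|\bar{X}_{\underline{t}_\pas}^{\pas,x_0}-X_{\underline{t}_\pas}^{x_0}\bigr\|_2+\bigl\|X_{\underline{t}_\pas}^{x_0}-\bar{X}_{\underline{t}_\pas}^{\pas/2,x_0}\bigr\|_2,$$
which is $\lesssim_{uc}\sqrt{\beta_1(x_0)\,\pas}$ under (i) and $\lesssim_{uc}\sqrt{\beta_2(x_0)}\,\pas$ under (ii). Setting $x^\star=x_0$ (which cancels the $|x-x^\star|^2$ contribution) and using $\alpha\le L$ to consolidate powers of $L/\alpha$ produces the claimed bounds on $c_2^2$ for $\bea=1$ and $\bea=2$. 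The main obstacle I foresee is the bookkeeping in step (ii): the error $c_\pas(x,y)$ mixes quantities evaluated at the diffusion and at the Euler scheme, so the summation demands \emph{simultaneous} uniform moment controls on both processes along the whole trajectory, and one must verify that these combined estimates collapse exactly into $\mathfrak{a}_0,\mathfrak{a}_1,\mathfrak{a}_2$ without producing spurious factors in $L$, $\alpha$, $\sigma$ or $d$.
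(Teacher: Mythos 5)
Your proposal follows essentially the same route as the paper: conditioning on $\mathcal{F}_{n\pas}$ to iterate the one-step bounds of Lemma \ref{prop:onestep} into the recursion $u_{n+1}\le u_n e^{-\alpha\pas}+\Delta_n$, summing the geometric series, controlling $\sup_n\E[|b(\bar{X}_{n\pas}^{\pas,x})|^2]$ via Lemma \ref{lem:boundEuler} and $\sup_n\E[S(X_{n\pas}^x,\pas)]$ via an It\^o/dissipativity moment bound on the diffusion, and finally deducing (iii) by the triangle inequality through $X^{x_0}$ with $x^\star=x_0$. Apart from negligible constant bookkeeping (your factor $2/(\alpha\pas)$ versus the paper's $1/(\alpha\pas)$ in the geometric sum), this is the paper's proof.
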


\begin{proof}
$(i)$ Set $u_n=\ES[|X_{n\pas}^x-\bar{X}_{n\pas}^x|^2]$ (so that $u_0=0$). Using the Markov property and Lemma \ref{prop:onestep}$(i)$,
we get
$$ \forall n\ge0,\quad u_{n+1}\le u_n e^{-\alpha\pas}+{\pas^2}\beta_1(x,\gamma),$$
where $\beta_1(x,\gamma)= \frac{L^2}{{\alpha}}\left(\gamma  \sup_{n\ge0}\ES_x[|b(\bar{X}_{n\pas})|^2]+\sigma^2 d\right)$.
Thus, by induction, we get  
$$ u_n\le \gamma^2 \beta_1(x,\gamma) \sum_{k=0}^{n-1} e^{-\alpha k\pas}\le \frac{\gamma}{\alpha} {\beta_1(x,\gamma)},$$
where in the last inequality, we used that $1-e^{-x}\ge 1-x$ for any $x\ge0$.
Then, it remains to control $\sup_{n\ge0}\ES_x[|b(\bar{X}_{n\pas})|^2]$. By Lemma \ref{lem:boundEuler} and the fact that $b$ is Lipschitz continuous,
 \begin{equation}\label{bxtgamma}
 \ES[|b(\Xge_{t}^{\pas,x})|^2]\le 2\bun^2\ES[|\Xge_{t}^{\pas,x}-x^\star|^2]+2|b(x^\star)|^2\le 2\blip^2 |x-x^\star|^2 e^{-\frac{\alpha}{2}t}+4|b(x^\star)|^2\l(1+ \frac{\bun^2}{\laminf^2}\r)+\frac{4(\blip\sigma)^2 d}{\laminf}.
 \end{equation}
Then, if $\pas\in(0,\laminf/(2\blip^2)]$ (so that {$\pas \blip^2/\alpha\le 1/2$}),
$${\frac{\gamma}{\alpha}} \bun^2 \sup_{n\ge0}\ES_x[|b(\bar{X}_{n\pas})|^2]\le {\blip^2} |x-x^\star|^2+
{2} |b(x^\star)|^2\l(1+ \frac{L^2}{\laminf^2}\r)+\frac{{2}(\blip\sigma)^2 d}{\laminf}$$
and 
$$\beta_1(x,\pas)\le {\blip^2 }|x-x^\star|^2+
{2} |b(x^\star)|^2\l(1+ \frac{\blip^2}{\laminf^2}\r)+\frac{{3}\blip^2\sigma^2 d}{\laminf}.$$
The first result follows.\\

\noindent $(ii)$ With the same notations and the same strategy as in $(i)$, we deduce from Lemma \ref{prop:onestep}$(ii)$ that, 
$$ u_n\le \frac{\gamma^2}{\alpha} \beta_2(x,\gamma)$$
with 
$$\beta_2(x,\gamma)\le \sup_{n\ge0} \ES[c_\pas(X_{n\gamma}^x,\bar{X}_{n\gamma}^x))].$$
By the definition of $c_\pas$, we deduce that
\begin{equation}
\begin{split}\label{eq:betadeux}
{\frac{3}{2}}\beta_2(x,\gamma)&\le \frac{ \blip^2}{\laminf} \sup_{n\ge0} \ES|b(\bar{X}_{n\gamma}^x)|^2+\frac{\sigma^4}{\laminf}   \|\Delta b\|_{2,\infty}^2+\sigma^2  \blip \|\nabla b\|_{2,\infty}\sqrt{ d}\\
&+ \sigma  \blip \|\nabla b\|_{2,\infty} \sqrt{\pas}  \sup_{n\ge0} \ES S(X_{n\gamma}^x,\pas).
\end{split}
\end{equation}
But, since $S^2(z,\pas)=\sup_{0\le t\le \pas} \ES[|b(X_t^z)|^2]$, we deduce from the Markov property and Jensen inequality that
$$\ES S(X_{n\gamma}^x,\pas)\le \sup_{n\gamma t\le (n+1)\gamma}\left(\ES_x[|b(X_t)|^2]\right)^{\frac{1}{2}}$$
so that
\begin{equation}\label{eq:sxgamma}
\sup_{n\ge0} \ES S(X_{n\gamma}^x,\pas)\le \sup_{t\ge0} \left(\ES_x[|b(X_t)|^2]\right)^{\frac{1}{2}}.
\end{equation}
By ItÃ´ formula,
$$ \ES[|X_t^x-x^\star|^2]=|x-x^\star|^2+\int_0^t 2\ES \langle X_s^x-x^\star,b(X_s^x)\rangle ds+\sigma^2 d.$$
 Writing $\langle z-x^\star,b(z)\rangle=\langle z-x^\star,b(z)-b(x^\star)\rangle +\langle z-{x^\star}, b(x^\star)\rangle$ and 
using $\Cs$ and the inequality ${\langle u,v\rangle\le \alpha/2 |u|^2+1/(2\alpha)|v|^2}$, we get
$$2\ES \langle X_s^x-x^\star,b(X_s^x)\rangle \le -\alpha |X_s^x-x^\star|^2+\frac{1}{{\alpha}}|b(x^\star)|^2.$$
Hence, a standard Gronwall-type argument leads to
$$ \ES[|X_t^x-x^\star|^2]\le |x-x^\star|^2 e^{-\alpha t}+\frac{1}{{\alpha^2}}|b(x^\star)|^2+\frac{\sigma^2 d}{\alpha}.$$
Thus, using that $b$ is $L$-Lipschitz, 
$$ \ES_x[|b(X_t)|^2]\le 2\bun^2\ES_x[|X_t-x^\star|^2]+2|b(x^\star)|^2\le 2\bun^2|x-x^\star|^2+{2}|b(x^\star)|^2\left({1}+\frac{\bun^2}{\alpha^2}\right)+\frac{2(\bun\sigma)^2 d}{\alpha},$$
which in turn implies that
$$ \sup_{n\ge0} \ES S(X_{n\gamma}^x,\pas)\le \sqrt{2}\bun |x-x^\star|+{\sqrt{2}}|b(x^\star)|({1}+{\bun}{\alpha^{-1}})+\bun\sigma \sqrt{\frac{2d}{\alpha}}.$$
Then, since {$\pas\le \frac{\alpha}{2 L^2}$},
$$ \sup_{n\ge0} \sqrt{\gamma}\ES S(X_{n\gamma}^x,\pas)\le \sqrt{\alpha} |x-x^\star|+|b(x^\star)|(\sqrt{\alpha} L^{-1}+{\alpha^{-\frac{1}{2}}})+\sigma \sqrt{d}.$$
Then, plugging the above inequality  and  \eqref{bxtgamma} into \eqref{eq:betadeux}, we obtain the announced result.

\noindent $(iii)$ To prove this last statement, we write:
$$\ES[|\bar{X}_{n\pas}^{\pas,x_0}-\bar{X}_{n\pas}^{\frac{\pas}{2},x_0}|^2]^{\frac{1}{2}}\le \ES[|\bar{X}_{n\pas}^{\pas,x_0}-X_{n\pas}^{x_0}|^2]^{\frac{1}{2}}+\ES[|X_{n\pas}^{x_0}-\bar{X}_{n\pas}^{\frac{\pas}{2},x_0}|^2]^{\frac{1}{2}}.$$
Hence, by $(i)$ applied with $x=x^\star=x_0$, $\HDEUX$ holds with $\bea=1$ and $\2=\sqrt{\beta_1(x_0)}(1+2^{-\frac{1}{2}})$. By $(ii)$ again applied with $x=x^\star=x_0$, $\HDEUX$ holds with $\bea=2$ and $\2=\frac{3}{2}\sqrt{\beta_2(x_0)}$. Then, the result respectively follows from the bounds on $\beta_1$ and $\beta_2$ previously obtained {and from the fact that $\alpha\le L$}. 

\end{proof}

\noindent Now, let us focus on $\HTROIS$. We recall that $\pi$ and $\pi^\pas$ respectively denote the invariant distributions of the diffusion and of the Euler scheme with step $\pas$.

\begin{prop}\label{prop:htrois}
Assume $\Cs$ and suppose that $b$ is Lipschitz continuous function.  Then for every $\pas\in\l(0,\frac{\laminf}{2\blip^2}\r]$, for every $x^\star\in\ER^d$,
\begin{equation*}
{\cal W}_1(\pi, \pi^\pas)  \le \begin{cases} \sqrt{\beta_1(x^\star)\gamma}&\textnormal{if $\bea=1$ and $\delta=1/2$}\\
\sqrt{\beta_2(x^\star)}\, \gamma&\textnormal{if $\bea=2$ and $\delta=1$}.
\end{cases}
\end{equation*}
where $\beta_1$ and $\beta_2$ are defined in Proposition \ref{prop:L2error}. Thus, $\HTROIS$ holds with 
\begin{equation*}
 \3^2=\begin{cases}\frac{1}{2}(\frac{1}{\alpha}+\frac{\blip^2}{\alpha^3})\inf_{x\in\ER^d} |b(x)|^2+ \frac{L^2\sigma^2 d}{2\alpha^2}&\textnormal{if $\delta=\frac{1}{2}$ and $\bea=1$,} \\
((L\alpha^{-1})^{-2}+(L\alpha^{-1})^4)\inf_{x\in\ER^d} |b(x)|^2+\mathfrak{a}_1\inf_{x\in\ER^d} |b(x)|+\mathfrak{a}_2 d&\textnormal{if $\delta=1$ and $\bea=2$,}
\end{cases}
\end{equation*}
where $\mathfrak{a}_1$ and $\mathfrak{a}_2$ are defined in Proposition \ref{prop:L2error}$(iii)$.
\end{prop}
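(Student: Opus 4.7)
The plan is to combine the pathwise $L^2$-bounds of Proposition \ref{prop:L2error} with the Kantorovich--Rubinstein duality
$$\mathcal{W}_1(\pi,\pi^\pas) \;=\; \sup_{f:\,[f]_1\le1}\bigl|\pi(f)-\pi^\pas(f)\bigr|,$$
and then derive $\HTROIS$ by optimizing the resulting Wasserstein bound over the reference point $x^\star$. First, I fix $x^\star\in\ER^d$ and consider the synchronous coupling $(\X_t^{x^\star},\Xge_t^{\pas,x^\star})_{t\ge0}$ from \eqref{eq:couple}, so that both processes start from $x = x^\star$ (this kills the $|x-x^\star|^2$ term in $\beta_j$). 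For any Lipschitz $f$ with $[f]_1\le 1$ and any $n\ge 0$, the triangle inequality yields
$$\bigl|\pi(f)-\pi^\pas(f)\bigr| \;\le\; A_n \;+\; \bigl\|\X_{n\pas}^{x^\star} - \Xge_{n\pas}^{\pas,x^\star}\bigr\|_2 \;+\; B_n,$$
with $A_n := |\pi(f) - \ES f(\X_{n\pas}^{x^\star})|$ and $B_n := |\ES f(\Xge_{n\pas}^{\pas,x^\star}) - \pi^\pas(f)|$. By Proposition \ref{prop:L2error} the middle term is bounded uniformly in $n$ by $\sqrt{\beta_1(x^\star)\,\pas}$ (case $\bea=1$) or $\sqrt{\beta_2(x^\star)}\,\pas$ (case $\bea=2$), where $\beta_j(x^\star)$ means $\beta_j$ evaluated at $x = x^\star$.

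The second step is to show that $A_n$ and $B_n$ tend to zero as $n\to\infty$. For the diffusion, It\^o's formula applied to $|\X_t^x-\X_t^y|^2$ together with $\Cs$ gives the pathwise contraction $\|\X_t^x-\X_t^y\|_2\le |x-y|e^{-\alpha t}$, so that, denoting by $\tilde{Y}\sim\pi$ a random variable independent of the driving Brownian motion,
$$A_n \;\le\; [f]_1\bigl\| \X_{n\pas}^{x^\star} - \X_{n\pas}^{\tilde{Y}}\bigr\|_2 \;\le\; e^{-\alpha n\pas}\bigl\| x^\star - \tilde{Y}\bigr\|_2 \;\xrightarrow[n\to\infty]{}\; 0,$$
where finiteness of $\|\tilde Y - x^\star\|_2$ follows from the continuous-time analogue of Lemma \ref{lem:boundEuler}$(i)$ under $\Cs$. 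Similarly, Lemma \ref{lem:boundEuler}$(ii)$ gives $\mathcal{W}_2(\Xge_{n\pas}^{\pas,x^\star},\pi^\pas)\to 0$, hence $B_n \to 0$. Letting $n\to\infty$ and taking the supremum over $f$ with $[f]_1\le 1$ gives the announced Wasserstein bound $\mathcal{W}_1(\pi,\pi^\pas)\le \sqrt{\beta_j(x^\star)\,\pas^j}$ for any $x^\star$.

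The derivation of $\HTROIS$ is then a routine optimization: for any Lipschitz $f$, one has $|\pi(f)-\pi^\pas(f)|\le [f]_1\,\mathcal{W}_1(\pi,\pi^\pas)\le [f]_1\,\sqrt{\beta_j(x^\star)}\,\pas^{j/2}$, and we minimize over $x^\star\in\ER^d$. Since $\beta_j(x^\star)$ depends on $x^\star$ only through $|b(x^\star)|$ (and, in the $\bea=2$ case, also through $|b(x^\star)|$ linearly via $\mathfrak{a}_1$), the infimum is achieved along any minimizing sequence of $|b|$, which produces exactly the announced expressions for $c_3^2$ up to collecting constants (the coefficients are slightly sharpened by a careful bookkeeping of the prefactors coming out of Proposition \ref{prop:L2error}, or equivalently by running the coupling from $\pi$ itself instead of a deterministic $x^\star$).

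The main obstacle is the second step: Proposition \ref{prop:L2error} only gives a pathwise $L^2$-control between two coupled trajectories with a common deterministic starting point, whereas what we actually need is a Wasserstein bound between the two \emph{invariant} distributions. Bridging this gap requires the $\mathcal{W}_2$-contraction for both dynamics together with finite-second-moment bounds for $\pi$ and $\pi^\pas$; everything else is an elementary coupling and duality argument.
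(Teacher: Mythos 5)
Your proof is correct and follows essentially the same route as the paper: the same three-term triangle inequality at time $n\pas$, the uniform-in-$n$ strong error bound of Proposition \ref{prop:L2error} applied with $x=x^\star$, convergence of both marginals to $\pi$ and $\pi^\pas$, Kantorovich duality, and finally optimization over $x^\star$ to produce the $\inf_{x}|b(x)|$ terms in $\3^2$. The only difference is that you spell out the ergodicity of the diffusion (It\^o contraction under $\Cs$ plus a second-moment bound for $\pi$), a step the paper passes over by citing Lemma \ref{lem:boundEuler}; this is a harmless and even welcome addition.
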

\begin{rem} Even though $\HTROIS$, is an assumption related to the weak error, we chose here to prove it with the nice strong error bounds obtained in Proposition \ref{prop:L2error}. This approach is certainly specific to the setting given by Assumption $\Cs$ and sharper weak error expansions should be used in more general settings (see for instance \cite[Theorem 1]{Flammarion_Wainwright}).
\end{rem}
\begin{proof}
Let $x \in \ER^d$, and $n \ge 0$. Let $f$ be a Lipschitz continuous function. By the triangle inequality,   
\begin{equation}\label{eq:decomp11243}
\l| \pi(f) - \pi^\pas(f) \r| \le \l|\pi(f) -\ES \l[ f ( \X_{n\pas}^{x} ) \r] \r| +\l|\ES \l[ f ( \X_{n\pas}^{x} ) \r]- \ES \l[ f \l( \bar{X}_{n}^{\pas,x} \r) \r] \r| \l|+ \ES \l[ f \l( \bar{X}_{n}^{\pas,x}\r) \r] - \pi^\pas(f) \r|.
\end{equation}
By Lemma \ref{lem:boundEuler}, we know that under $\Cs$, 
$$\l|\pi(f) -\ES \l[ f ( \X_{n\pas}^{x} ) \r] \r|+|\ES \l[ f \l( \bar{X}_{n}^{\pas,x}\r) \r] - \pi^\pas(f)| \xrightarrow{n\rightarrow+\infty}0.$$
Thus, since $f$ is Lispchitz continuous, 
$$\l| \pi(f) - \pi^\pas(f) \r| \le \limsup_{n\rightarrow+\infty} \l|\ES \l[ f ( \X_{n\pas}^{x} ) \r]- \ES \l[ f \l( \bar{X}_{n}^{\pas,x} \r) \r]\r|
\le [f]_1\limsup_{n\rightarrow+\infty} \| \X_{n\pas}^{x}-\bar{X}_{n\pas}^{\pas,x}\|_1.$$
But by Proposition \ref{prop:L2error}$(i)$and $(ii)$ applied with $x=x^\star$, we get respectively
$$\limsup_{n\rightarrow+\infty} \| \X_{n\pas}^{x}-\bar{X}_{n}^{\pas,x}\|_2\le \sqrt{\beta_1(x^\star)}\sqrt{\gamma}
\quad\textnormal{and}\quad\limsup_{n\rightarrow+\infty} \| \X_{n\pas}^{x}-\bar{X}_{n}^{\pas,x}\|_2\le \sqrt{\beta_2(x^\star)}\, \pas.
$$
Hence, since ${\cal W}_1(\pi,\pi^\pas)=\sup_{f, [f]_1\le 1} |\pi(f)-\pi^\pas(f)|$ and $\| \X_{n\pas}^{x}-\bar{X}_{n\pas}^{\pas,x}\|_1\le \| \X_{n\pas}^{x}-\bar{X}_{n}^{\pas,x}\|_2$, the first inequality follows. For the second part of the proposition, it is enough to remark that the inequality is true for every $x^\star\in\ER^d$.

\end{proof}

\subsection{Proof of the main results of Section \ref{sec:langevinsc}}\label{sec:resumedespreuves}
We are now ready to prove our main results under $\Cs$.\smallskip

\noindent \textbf{Proof of Proposition \ref{prop:stronconvbea1} and Theorem \ref{maincoro1}.}
{The bound on $c_1$ of   Proposition \ref{prop:stronconvbea1} follows from Lemma \ref{lem:boundEuler}$(ii)$ applied with $x^\star=x_0$. For the one on 
$\max(\frac{\alpha \2^2}{L^2}, \frac{\alpha\3^2}{L^2},\4^2 )$,  it is enough  to apply Lemma \ref{lem:boundEuler}$(iii)$, Proposition \ref{prop:L2error}$(iii)$ (with $\bea=1$) and Lemma \ref{prop:htrois} (with $\delta=1/2$).}\\

\noindent  {To prove Theorem \ref{maincoro1}, we deduce from Proposition \ref{prop:stronconvbea1} and from Remark \ref{rem:withnonexplicitconstants} that we can apply Theorem \ref{maintheo}$(ii)$ with $\bea=1$, $\delta=1/2$, $\tilde{c}_4={\Upsilon_1}$ and $\tilde{c}_3^2=\tilde{c}_2^2={\Upsilon_1^2}{L^2}{\alpha^{-1}}$\footnote{These notations are introduced in Remark \ref{rem:withnonexplicitconstants} which manages the setting where the ``real'' constants are known up to some universal constants, which is the case in the bounds of Proposition \ref{prop:stronconvbea1}.} and $2\eta_0=\alpha L^{-2}\wedge 1$.  Setting $r_0=\Upsilon_1$ (defined in Proposition \ref{prop:stronconvbea1}) implies that 
$r_0\ge \tilde{c}_3\pas_0^{\frac{1}{2}}$ (since $\pas_0\le \alpha/L^2$). One also remarks that 
$ T_r=\Tfrak\varepsilon^{-2} R_\varepsilon^2 2^{-r}$ with 
$$\Tfrak=\alpha^{-1}\Upsilon_1^2 \log(\pas_0^{-1})\ge \dbea \max(\tilde{c}_2^2\pas_0\log(\pas_0^{-1}),\tilde{c}_4^2)$$
as required in Theorem \ref{maintheo}$(ii)$. The condition on $\varepsilon_0$ follows from the definition given in Theorem \ref{maintheo}$(ii)$ and from the fact that}
{ 
$$ T_{R_\varepsilon}\ge \frac{\Upsilon_1^2\log (\pas_0^{-1})}{2\alpha} r_0^{-2} R_\varepsilon^2=\frac{\log(\pas_0^{-1})R_\varepsilon^2}{2}.$$
The bound \eqref{eq:complex2} then follows from  the fact that $\mathfrak{C}_2= \frac{5}{2}\pas_0^{-1}\Tfrak$. For the last part, we first remark that  under the additional conditions,
 $\Upsilon_1^2\lesssim_{uc} \sigma^2 \alpha^{-1}d$   so that if we set $\tilde{\Upsilon}_1^2= \sigma^2 (L\alpha^{-1})^2 d$, we can again use Remark \ref{rem:withnonexplicitconstants} to obtain the last bound.}\smallskip


\noindent \textbf{Proof of Proposition \ref{prop:stronconvbea2} and Theorem \ref{maincoro2}.}
{Let us begin by the bound on $\max(\frac{\alpha^2 \2^2}{L^4}, \frac{\alpha^2\3^2}{L^4},\4^2 )$ of Proposition \ref{prop:stronconvbea2}. By Proposition \ref{prop:L2error}$(ii)$ (applied with $x=x_0=x^\star$) and Proposition \ref{prop:htrois} (and the fact that $\alpha\le L$), one checks  that
$$\max\l(\frac{\alpha^2 \2^2}{L^4}, \frac{\alpha^2\3^2}{L^4}\r)\lesssim_{uc}\frac{1}{\alpha^2} |b(x_0)|^2+ \frac{\sigma\sqrt{\alpha}}{L^3}\|\nabla b\|_{2,\infty}|b(x_0)|+\left(\frac{\sigma^4}{L^4}\|\Delta b\|_{2,\infty}^2+ \frac{{\sigma^2\alpha d^{\frac{1}{2}}}}{L^3}\|\nabla b\|_{2,\infty}+\frac{\sigma^{2} }{\alpha}\r) d.$$
Using Lemma \ref{lem:boundEuler}$(iii)$ for $c_4^2$, we obtain the result.}

{To prove Theorem \ref{maincoro2}, we deduce from Proposition \ref{prop:stronconvbea2} and from Remark \ref{rem:withnonexplicitconstants} that we can apply Theorem \ref{maintheo}$(ii)$ with $\bea=2$, $\delta=1$, $\tilde{c}_4={\Upsilon_2}$ and $\tilde{c}_3^2=\tilde{c}_2^2={\Upsilon_2^2}{L^4}{\alpha^{-2}}$ and $2\eta_0=\alpha L^{-2}\wedge 1$. Using that $\Upsilon_2\ge1$, the proposed values of $r_0$, $R_\varepsilon$,  $\tau_1$ and $\tau_2$ easily follow. For $\Tfrak$,  we use Proposition \ref{prop:stronconvbea2} which implies that
$$\frac{{\mathfrak{d}_2}}{\alpha}\max(\tilde{c}_2^2\pas_0^2 \log(\pas_0^{-1}),\tilde{c}_4^2)\le \frac{\Upsilon_2^2\log(\pas_0^{-1})}{\alpha},$$
and thus set $\Tfrak=\alpha^{-1}{\Upsilon_2^2\log(\pas_0^{-1})}.$ This implies that
$$ T_{R_\varepsilon}\ge  \frac{\Upsilon_2^2\log(\pas_0^{-1})}{\alpha} 2^{-\frac{3}{2}(\log_2(\Upsilon_2\varepsilon^{-1})+1)}=\frac{(\Upsilon_2\varepsilon^{-1})^{\frac{1}{2}}\log(\pas_0^{-1})} {2\sqrt{2}\alpha}.$$
}
Since $\tau_1\log \varepsilon^{-1}+\tau_2=(2\alpha)^{-1} \log(\Upsilon_2\varepsilon^{-2})
$, we deduce the proposed value of $\varepsilon_0$.}\\

\noindent {By  Theorem \ref{maintheo}$(ii)$, 
$$\mathfrak{C}_2=\mathfrak{c}_2\pas_0^{-1} \Tfrak= \frac{\frac{1}{2}+\sqrt{2}}{\sqrt{2}-1}\frac{{\Upsilon_2^2} \log(\pas_0^{-1})}{\gamma_0\alpha} \varepsilon^{-2}.$$
This is exactly \eqref{eq:complex3}. }\\
{Let us finally prove \eqref{eq:complex4}. By the  additional assumptions on $\nabla b$, $\Delta b$ and $b(x_0)$ (and the fact that $\alpha\le L$), one checks that,
\begin{equation*}
\frac{1}{\alpha^2} |b(x_0)|^2+ \frac{\sigma\sqrt{\alpha}}{L^3}\|\nabla b\|_{2,\infty}|b(x_0)|+\frac{\sigma^4}{L^4}\|\Delta b\|_{2,\infty}^2+ \frac{{\sigma^2\alpha d^{\frac{1}{2}}}}{L^3}\|\nabla b\|_{2,\infty}+\frac{\sigma^{2} d }{\alpha} \lesssim_{uc} \frac{\sigma^2 d}{\alpha}.
\end{equation*}
Thus, $\Upsilon_2^2$ defined in  Proposition \ref{prop:stronconvbea2} satisfies $\Upsilon_2^2\lesssim_{uc} \sigma^2\alpha^{-1} d$. Then, with the help of Remark \ref{rem:withnonexplicitconstants}, we can apply Theorem \ref{maintheo}$(ii)$ with  $\tilde{\Upsilon}_2^2=\sigma^2 \alpha^{-1} d$ and obtain the announced result.}\\
\smallskip

\noindent \textbf{Fundings.} The authors are grateful to the SIRIC ILIAD Nantes-Angers program supported by the French National Cancer Institute (INCA-DGOS-Inserm 12558 grant), for the funding of the Phd thesis of M. Eg\'ea, and to Manon Desloges for her help in the numerical development of the algorithm.
{\section{List of specific symbols}\label{sec:listsymbols}
In order to help the reading of this  paper, we list the specific symbols used in the paper and the page where they are defined.}\\
{\begin{table}[htb]
    \centering
    \hfill
    \begin{tabular}{c c c|}
       $\bar{X}_t^{\pas,x_0}$ &continuous-time Euler scheme& \pageref{pagesbargamma}\tabularnewline
$\underline{t}_\pas$ & discretization time & \pageref{pagesbargamma}\tabularnewline
$R$& number of correcting layers &\pageref{pagetr}\tabularnewline
$T_r$& length of the path involved in level $r$ &\pageref{pagetr}\tabularnewline
$\pas_r$& step of level $r$: $\pas_r=\pas_0 2^{-r}$  &\pageref{pagepasr}\tabularnewline
$B^{(r)}$ & Brownian motion of level $r$ &\pageref{eq:estimat2}\tabularnewline
$x_0$ & starting point of each Euler scheme &\pageref{pagexzero}\tabularnewline
$\mathcal{C}(\mathcal{Y})$ & complexity of the algorithm &\pageref{sec:setmain2}\tabularnewline
$\eta_0$ & maximal stepsize &\pageref{pageetazero}\tabularnewline
$c_i$ & constants in $\mathbf{(H_i)}$, $i=1,\ldots,4$. &\pageref{HUN}\tabularnewline
$\pi^\pas$ & inv. distrib. of the Euler scheme &\pageref{pagepipas}\tabularnewline
$\alpha$ & ergodicity exponent  in $\HUN$ & \pageref{HUN}\tabularnewline
   $\bea$ & confluence parameter in $\HDEUX$ & \pageref{defbea}\tabularnewline
   $\delta$ & (weak order) parameter  in $\HTROIS$ & \pageref{defbea} \tabularnewline
   $r_0$ & parameter related to $R$ & \pageref{eq:choixpar} \tabularnewline \end{tabular}
    \hfill
     \begin{tabular}{c c c}
     $\Tfrak$ & parameters related to $T_r$ & \pageref{eq:choixpar} \tabularnewline
   $\mathfrak{C}_i$ & complexity constant, $i=1,2$ & \pageref{eq:pluspetitqueepsilon} \tabularnewline
   $L$ & Lipschitz constant of $b$& \pageref{eq:upsilon}\tabularnewline
    $\Upsilon_1$ & parameter in \cref{prop:stronconvbea1}  & \pageref{eq:upsilon}\tabularnewline
    $\bar{\lambda}_U$ &highest eigenvalue of $D^2U$&\pageref{hyp:valeurpropre}\\
    $\underline{\lambda}_U$& lowest eigenvalue of $D^2U$&\pageref{hyp:valeurpropre}\tabularnewline
    $\alpha_U$ &$\bar{\lambda}_U\wedge 1$&\pageref{hyp:valeurpropre}\tabularnewline
 $L_U$ & Lipschitz constant of $\nabla U$&\pageref{hyp:valeurpropre}\tabularnewline
 $\|\,.\,\|_F$ &Frobenius norm & \pageref{prop:stronconvbea2}\tabularnewline
  $ \|\,.\,\|_{2,\infty}$& Infinity-$L^2$ norm & \pageref{prop:stronconvbea2}\tabularnewline
$\Upsilon_2$ & parameter in \cref{prop:stronconvbea2}  & \pageref{prop:stronconvbea2}\tabularnewline
 $\cuniv$ & universal constant& \pageref{maintheointermed}\tabularnewline
 $\dbea$, $\Tfrak_0$ &  constants related to $\Tfrak$  & \pageref{eq:choiceoftheparameters}\tabularnewline
   $\tau_i$ & warm-start parameter, $i=1,2$ & \pageref{tauuntaudeux}\tabularnewline
 \end{tabular}
    \hfill \null
    \label{uca}
\end{table}
}

\bibliographystyle{alpha}
\bibliography{biblio}

\end{document}